\newtheorem{theorem}{Theorem}[section]
\newtheorem{definition}[theorem]{Definition}
\newtheorem{proposition}[theorem]{Proposition}
\newtheorem{corollary}[theorem]{Corollary}
\newtheorem{lemma}[theorem]{Lemma}
\def\Q{\mathbb{Q}}
\def\F{\mathbb{F}}
\def\R{\mathbb{R}}
\def\Z{\mathbb{Z}}
\def\A{\mathbb{A}}
\def\C{\mathbb{C}}
\def\G{\mathbb{G}}
\def\M{\mathcal{M}}
\def\vp{\varpi}
\def\lan{\langle}
\def\ran{\rangle}
\def\lra{\longrightarrow}
\def\ra{\rightarrow}
\def\ov{\overline}
\def\ul{\underline}
\def\wh{\widehat}
\def\wt{\widetilde}
\def\st{\stackrel}
\def\tr{\textrm}
\begin{document}
\title{On the $\ell$-adic cohomology of some $p$-adically uniformized Shimura varieties}
\author{Xu Shen}
\date{}
\address{Fakult\"at f\"{u}r Mathematik\\
Universit\"at Regensburg\\
Universitaetsstr. 31\\
93040, Regensburg, Germany} \email{xu.shen@mathematik.uni-regensburg.de}

\address{Current address: Morningside Center of Mathematics\\
	No. 55, Zhongguancun East Road\\
	Beijing 100190, China}\email{shen@math.ac.cn}

\renewcommand\thefootnote{}
\footnote{2010 Mathematics Subject Classification. Primary: 11G18; Secondary: 14G35.}
\keywords{Shimura varieties, $p$-adic uniformization, $\ell$-adic cohomology}

\begin{abstract}
We determine the Galois representations inside the $\ell$-adic cohomology of some unitary Shimura varieties at split places where they admit uniformization by finite products of Drinfeld upper half spaces. Our main results confirm Langlands-Kottwitz's description of the cohomology of Shimura varieties in new cases.
\end{abstract}

\maketitle
\tableofcontents
\section{Introduction}
The aim of this article is to determine the Galois representations inside the $\ell$-adic cohomology of some unitary Shimura varieties at split places where they admit uniformization by finite products of Drinfeld upper half spaces (\cite{RZ} theorem 6.50 and \cite{Va2}). The main results confirm Langlands-Kottwitz's description of the cohomology of Shimura varieties in new cases.

For the Shimura varieties with good reductions at $p$ ($\neq \ell$), Langlands and Kottwitz have given a conjectural description of the Galois representations inside the cohomology, cf. \cite{Ko4}. Roughly it says that, the Galois representation associated to an automorphic representation when restricting to a place above $p$ is given by the local Langlands correspondence for the local reductive group. To prove such a result, Langlands's idea is to analyze the cohomology of Shimura varieties by computing the alternating sum of the traces of Hecke operators twisted by a Frobenius correspondence on the cohomology. By Lefschetz trace formula, this needs to understand the set of points on Shimura varieties over finite fields. Kottwitz introduced some group theoretic triples $(\gamma_0;\gamma,\delta)$ in \cite{Ko2} which, roughly speaking, parameterize isogeny classes of points of Shimura varieties over finite fields. There he also proved a formula for the traces of Hecke operators twisted by a Frobenius by using these triples. Then to get the desired description, one should stabilize this formula and compare it with the Arthur-Selberg trace formula, see \cite{Ko4}. In some cases this description has been proved, for example see \cite{Ko3, Mo}. Here in our case, even for the maximal level at $p$, these varieties have bad reductions. And in fact we also want to treat the cases of arbitrary levels at $p$.

In \cite{Sch3} Scholze has developed the Langlands-Kottwitz approach for some PEL Shimura varieties with arbitrary level at $p$. There the local hypothesis are made to ensure the local reductive groups are quasi-split, so that there is no problem for the definition of Kottwitz triples. Also, Scholze proved that the set of all equivalent effective Kottwitz triples can parameterize all the fixed points. The key new ingredient is to define some test functions by deformation spaces of $p$-divisible groups with some additional structures. This approach avoids the study of local models of Shimura varieties with bad reductions, and gives less information about these functions. However, the definition of these test functions is conceptually elegant, and sufficient for applications in many ways. Scholze then studied the properties of these functions and proved a formula similar to the one of Kottwitz in \cite{Ko2}. Using this formula Scholze and Shin in \cite{SS} have proved some character identities about the transfers of the test functions defined in \cite{Sch3}, and deduce many results about the cohomology of some compact unitary Shimura varieties for arbitrary level at $p$, which confirm the expected descriptions of Langlands-Kottwitz. Note to have such a description one needs to know the local Langlands correspondence for the related reductive groups. In \cite{SS} their assumptions are made such that the local reductive groups at $p$ are products of Weil restriction of general linear groups.

For the PEL Shimura varieties with reductive groups $G$ non quasi-split at $p$, we also want to describe their points modulo $p$ and their $\ell$-adic cohomology. However, there are some group theoretic problems due to the non quasi-split property. Namely, in this set up the set of equivalent Kottwitz triples (in the usual sense) will not be enough to parameterize all the points valued in a finite field $\F_{p^t}$. We can indeed find the pairs $(\gamma,\delta)$ associated to each isogeny class over $\F_{p^t}$. But the conjugacy class of the norm $N\delta$ does not always contain an element of $G(\Q_p)$, cf. \cite{Ko6}. This is an obstruction to find a $\gamma_0\in G(\Q)$ such that $(\gamma_0;\gamma,\delta)$ forms a Kottwitz triple. Nevertheless, we can introduce some reasonable test functions $\phi_{\tau,h}$ at $p$ in the same way as \cite{Sch3} whose twisted orbital integrals contribute to the trace formula. This was already noted by Scholze in \cite{Sch3}. Following \cite{Ra} (conjecture 5.7) and \cite{Ra1} (conjecture 10.2), one conjectures that if the conjugacy class of the norm $N\delta$ does not contain an element of $G(\Q_p)$, then the twisted orbital integral vanishes
\[TO_{\delta\sigma}(\phi_{\tau,h})=0.\]This is certainly a new phenomenon in the non quasi-split case.
If one can prove this result, then only the points parameterized by the Kottwitz triples contribute to the trace formula, and one has a similar formula as \cite{Sch3} and \cite{Ko4}. Going through further, one can continue the process of stabilization or pseudo-stabilization to compute the cohomology. To get the desired description of the cohomology, one still needs to know the local Langlands correspondence for $G(\Q_p)$ and a suitable character identity for the twisted transfer of $\phi_{\tau,h}$ (cf. \cite{SS} conjecture 7.1, which is a weaker form of \cite{Hai2} conjectures 6.1.1 and 6.2.3).

This paper deals with a special example where we can prove the above two main points for the test functions $\phi_{\tau,h}$. Also, for this case at hand, the local Langlands correspondences has been known. So we can get desired description of the cohomology. Let $Sh_K$ be a Shimua variety over its local reflex field $E$ with the open compact subgroup in the form $K=K^pK_p\subset G(\A_f)$, such that at $p$ it admits uniformization by $r$ products of Drinfeld moduli spaces $\M_{Dr, K_p}$ with level $K_p$. Let $\xi$ be an irreducible algebraic representation of $G$ over $\ov{\Q}_\ell$. By a standard construction, we have a $\ov{\Q}_\ell$-local system $\mathcal{L}_\xi$ on each Shimura variety $Sh_K$ for $K\subset G(\A_f)$. We are interested in the virtual $G(\A_f)\times W_E$-representation defined by the alternating sum of $\ell$-adic cohomology
\[H_\xi=\sum_{i}(-1)^i\varinjlim_{K}H^i(Sh_K\times\ov{\Q}_p, \mathcal{L}_\xi),\]
where $K$ runs over the set of open compact subgroups of $G(\A_f)$. The local reductive group $G_{\Q_p}$ is a product of some inner forms of $GL_n$ (together with $\G_m$), so the local Langlands correspondence has been known, cf. \cite{ABPS, HS}. For any smooth irreducible representation $\pi_p$ of $G(\Q_p)$, let $\varphi_{\pi_p}$ be the associated local Langlands parameter. Recall associated to the Shimura data $(G,h^{-1})$ we have the representation $r_{-\mu}$ of the Langlands dual group $^L(G_{E})$ introduced in \cite{Ko1} lemma 2.1.2. The main theorem is as follows.
\begin{theorem}
With the notations as above, we have an identity
\[H_\xi=\sum_{\pi_f}a(\pi_f)\pi_f\otimes(r_{-\mu}\circ\varphi_{\pi_p}|_{W_E})|-|^{r(1-n)/2}\]
as virtual $G(\Z_p)\times G(\A_f^p)\times W_E$-representations. Here $\pi_f$ runs through irreducible admissible representations of $G(\A_f)$, the integer $a(\pi_f)$ is as in \cite{Ko3} p. 657.
\end{theorem}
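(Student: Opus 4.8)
\emph{Proof strategy.} The plan is to run the Langlands--Kottwitz method in the form developed by Scholze in \cite{Sch3}, feeding in the $p$-adic uniformization as the source of the local information at $p$. For $f^p\in C_c^\infty(G(\A_f^p))$, an element $\tau\in W_E$ lifting a sufficiently large power of geometric Frobenius, and $h\in C_c^\infty(G(\Z_p))$, one first defines the test function $\phi_{\tau,h}$ on $G(\Q_{p^s})$ exactly as in \cite{Sch3}, from the $\ell$-adic cohomology of the deformation spaces of the relevant $p$-divisible groups with $\mathcal{O}_B$-action, polarization, and level structure. Since the defining group of $Sh_K$ is anisotropic modulo centre over $\Q$, the varieties $Sh_K$ are proper over $E$, so no boundary terms appear. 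A Lefschetz trace formula together with a point-counting argument following \cite{Ko2, Sch3} then expresses $\mathrm{tr}(f^p\times\phi_{\tau,h}\times\tau\mid H_\xi)$ as a sum over group-theoretic data $(\gamma_0;\gamma,\delta)$ of products $O_\gamma(f^p)\,TO_{\delta\sigma}(\phi_{\tau,h})\,\mathrm{tr}\,\xi(\gamma_0)$. The decisive geometric feature is that, since $Sh_K$ is $p$-adically uniformized by a product of $r$ Drinfeld spaces, its special fibre is a single isogeny class --- the basic one --- so the pairs $(\gamma,\delta)$ that occur are precisely those attached to that class.

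The first local point is the vanishing $TO_{\delta\sigma}(\phi_{\tau,h})=0$ whenever the conjugacy class of the norm $N\delta$ contains no element of $G(\Q_p)$, the new phenomenon in the non quasi-split case (conjectured in \cite{Ra}). I would establish it by a direct analysis of the deformation spaces defining $\phi_{\tau,h}$: within the basic isogeny class the possible $\delta$ are controlled by the explicit structure of special formal $\mathcal{O}_D$-modules, and for a ``bad'' $\delta$ the underlying $p$-divisible group cannot carry the required $\mathcal{O}_D$-action compatibly, so the relevant space is empty (equivalently has vanishing Euler characteristic) and the twisted orbital integral vanishes. Granting this, only the genuine Kottwitz triples contribute, and the point-counting formula takes the shape of \cite{Sch3, Ko4}.

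The second, and main, local point is the character identity for the twisted transfer $f_{\tau,h}$ of $\phi_{\tau,h}$ to $G(\Q_p)$: for every irreducible admissible representation $\pi_p$ of $G(\Q_p)$,
\[\mathrm{tr}(f_{\tau,h}\mid\pi_p)\;=\;\mathrm{tr}(h\mid\pi_p)\cdot\mathrm{tr}\big(\tau\mid(r_{-\mu}\circ\varphi_{\pi_p}|_{W_E})\otimes|-|^{r(1-n)/2}\big),\]
the instance of \cite{SS} conjecture 7.1 (a weak form of \cite{Hai2} conjectures 6.1.1 and 6.2.3) relevant here. I would prove it using the $p$-adic uniformization again: by the defining property of $\phi_{\tau,h}$ (\cite{Sch3}) the left-hand side is an alternating trace of $\tau\times h$ on the Hom-spaces cut out of the $\ell$-adic cohomology of a product of $r$ Drinfeld towers $\M_{Dr,\infty}$ by the action of $J_b(\Q_p)$. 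One then inserts the known description of $H_c^*(\M_{Dr,\infty})$ as a representation of $D^\times\times GL_n(F)\times W_F$: by the Faltings--Fargues isomorphism it agrees with the cohomology of the Lubin--Tate tower (computed by Harris--Taylor on the supercuspidal part, and by Dat and Boyer in general), it realizes the local Jacquet--Langlands and Langlands correspondences, and the Weil group acts on the $\pi_p$-isotypic part through $r_{-\mu}\circ\varphi_{\pi_p}$ up to the displayed half-twist. A K\"unneth computation over the $r$ factors, together with the local Langlands correspondence for inner forms of $GL_n$ (\cite{ABPS, HS}), yields the identity. I expect this step to be the main obstacle: one must treat the non-supercuspidal constituents and the nontrivial coefficient system $\L_\xi$, and track all normalizations and the interplay of the $J_b$- and Galois-actions across the product.

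With both local ingredients in hand, the trace identity reads
\[\mathrm{tr}(f^p\times\phi_{\tau,h}\times\tau\mid H_\xi)=\sum_{(\gamma_0;\gamma,\delta)}c(\gamma_0;\gamma,\delta)\,O_\gamma(f^p)\,TO_{\delta\sigma}(\phi_{\tau,h})\,\mathrm{tr}\,\xi(\gamma_0),\]
the sum running over equivalence classes of Kottwitz triples. I would then stabilize, or pseudo-stabilize following \cite{Ko3, Ko4}, and compare with the Arthur--Selberg trace formula for $G$. Using base change to $GL_n$ and the Jacquet--Langlands correspondence to describe the discrete automorphic spectrum of the unitary group $G$, together with the compatibility of the local and global Langlands correspondences, the spectral side becomes $\sum_{\pi_f}a(\pi_f)\,\mathrm{tr}(f^p\mid\pi_f^p)\,\mathrm{tr}(h\mid\pi_p)\,\mathrm{tr}\big(\tau\mid(r_{-\mu}\circ\varphi_{\pi_p}|_{W_E})\otimes|-|^{r(1-n)/2}\big)$, with $a(\pi_f)$ the multiplicity of \cite{Ko3} p.~657. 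Since this holds for all $f^p$, all $\tau$ lifting large Frobenius powers, and all $h\in C_c^\infty(G(\Z_p))$, the $W_E$-action on $H_\xi$ is pinned down, giving
\[H_\xi=\sum_{\pi_f}a(\pi_f)\,\pi_f\otimes(r_{-\mu}\circ\varphi_{\pi_p}|_{W_E})\,|-|^{r(1-n)/2}\]
as virtual $G(\Z_p)\times G(\A_f^p)\times W_E$-representations.
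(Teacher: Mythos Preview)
Your overall architecture---define $\phi_{\tau,h}$, prove the vanishing $TO_{\delta\sigma}(\phi_{\tau,h})=0$ for ``bad'' $\delta$, establish the character identity for the transfer $f_{\tau,h}$, then pseudostabilize and compare with Arthur--Selberg---matches the paper's. But the way you propose to secure the two local ingredients is different from what the paper does, and in the case of the vanishing your argument has a genuine gap.

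\textbf{The vanishing.} You write that for a bad $\delta$ ``the underlying $p$-divisible group cannot carry the required $\mathcal{O}_D$-action compatibly, so the relevant space is empty.'' This is not correct: the $\delta$'s in question arise from honest $\F_{p^t}$-points of the moduli scheme $S_{K^p}$, so the special $O_D$-module and its deformation space exist and the rigid generic fibre $X_{\ul H,K}$ is nonempty. The value $\phi_{\tau,h}(\delta)$ need not be zero; what must vanish is the \emph{twisted orbital integral}, i.e.\ an integral of $\phi_{\tau,h}$ over a $\sigma$-conjugacy orbit, and this is a cancellation phenomenon, not an emptiness phenomenon. (Waldspurger's proof in the maximal-level case in \cite{Ra} already makes this visible: it is a nontrivial combinatorial identity, not a vacuity.) The paper does not attempt a direct local proof at all. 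Instead it first proves the theorem in the case $r=1$ by a completely different route---the Hochschild--Serre spectral sequence coming from $p$-adic uniformization, combined with Boyer's description of $H_c^\ast(\M_{LT})$, the Faltings--Fargues isomorphism, and Dat's Ext computations for elliptic representations---and then \emph{deduces} the vanishing by a global comparison: one transfers $\phi_{\tau,h}$ to the quasi-split inner form $G^\ast(\Q_p)=GL_n(E)\times\Q_p^\times$, plugs the resulting $f^{\mathbf H}_{\tau,h}$ into the trace formula for $I$, and reads off from the already-known shape of $H_\xi$ that $\mathrm{tr}\,\pi_p(f^{\mathbf H}_{\tau,h})=0$ whenever $\pi_p^0$ is not a discrete series. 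A density argument on the Bernstein variety then forces the bad orbital integrals to vanish.

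\textbf{The character identity.} Your plan to prove $\mathrm{tr}(f_{\tau,h}\mid\pi_p)=\mathrm{tr}(h\mid\pi_p)\,\mathrm{tr}(\tau\mid r_{-\mu}\circ\varphi_{\pi_p})$ directly from the cohomology of the Drinfeld tower is in spirit close to the $r=1$ computation in Section~3, but note that $\phi_{\tau,h}$ is defined via \emph{deformation spaces} (open tubes in the Rapoport--Zink space), and relating its twisted traces to traces on the full tower is not immediate. The paper again avoids this: once the $r=1$ theorem is in hand and the vanishing is known (so that $f_{\tau,h}$ exists), pseudostabilization gives $N\,\mathrm{tr}(\tau\times h\times f^p\mid H_\xi)=\mathrm{tr}(f_{\tau,h}\times f^p\mid H_\xi)$, and inserting the explicit $r=1$ description of $H_\xi$ yields the identity for a single factor $D^\times$. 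The general identity then follows by the product decomposition $f_{\tau,h}=f_{\tau,h_1}\cdots f_{\tau,h_r}\times h'\times h_0'$.

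In short, the paper's proof is a bootstrap: Theorem~3.4 ($r=1$, via uniformization and Boyer/Dat) $\Rightarrow$ vanishing (global argument) $\Rightarrow$ character identity (global argument) $\Rightarrow$ general $r$. Your proposal tries to prove both local inputs locally; the character-identity step might be salvageable with more work, but the vanishing step as you have written it is based on a false premise.
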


We first prove the theorem for case $r=1$ by using the results of Boyer \cite{Bo} and Dat \cite{Dat}, thus avoiding the counting points method for these Shimura varieties. In fact, the global $p$-adic uniformization leads to a Hochschild-Serre spectral sequence for the $\ell$-adic cohomology of these Shimura varieties. From which we get a formula (in \cite{F} Fargues called it as a $p$-adic Matsushima formula) for $H_\xi$. Then using Boyer's description of the cohomology of Lubin-Tate spaces, the Faltings-Fargues isomorphism for the towers of Lubin-Tate and Drinfeld, and Dat's results about extension of elliptic representations, we get the desired formula in the theorem. Here we prove the identity as $G(\A_f)\times W_E$-representations. We note that essentially the same idea had already appeared in \cite{Dat} section 5.

To prove the result in general case, we develop the theory of test functions by means of deformation spaces of $p$-divisible groups in our context as in \cite{Sch3}. We also adapt some notations from \cite{Sch2}. To prove the vanishing results and character identities for these test functions, we use the formula for $H_\xi$ in the case $r=1$ proved previously. Thus these are some global arguments. We note that in \cite{Ra} Rapoport conjectured the vanishing results for the example there with maximal level at $p$, and by some explicit combinatorial description of the test function Waldspurger proved this conjecture in the case $r=1$. There the test functions were constructed by the theory of local models which describes the bad reduction of the Shimura varieties. These functions satisfy the required character identities. So in this case $r=1$ and the level structure $K_p$ at $p$ is maximal, the result was more or less known in \cite{Ra}. In fact in loc. cit. Rapoport restricted on the trivial coefficients and concentrated on the local semisimple zeta functions (see the below corollary).

In \cite{Sh} we shall use our results for the test functions to describe the cohomology of quaternionic and related unitary Shimura varieties at ramified places, see section 7 for more details.

In a series of papers \cite{H1, H2, H3}, Harris had studied the supercuspidal part of the cohomology. At that time, one did not know the local Galois representations inside the cohomology are given by the local Langlands correspondence, except for the cases $n<p$ (\cite{H2}). In fact, in these papers Harris tried to prove the local Langlands correspondence for $GL_n$ by studying the supercuspidal part of the cohomology of these Shimura varieties. Later, in \cite{HT} Harris and Taylor successfully achieved this by studying the cohomology of another class of Shimura varieties. We note that the same result as in the above theorem for Harris-Taylor's Shimura varieties was proved in \cite{Sch1} implicitly.

From this theorem we get the following corollary concerning the local semisimple zeta functions of our Shimura varieties. Let $\wt{E}$ be the global reflex field, and $\nu$ be a place of $\wt{E}$ above $p$ such that $E=\wt{E}_\nu$.
\begin{corollary}
In the situation of the theorem, let $K\subset G(\A_f)$ be any sufficiently small open compact subgroup. Then the semisimple local Hasse-Weil zeta function of $Sh_K$ at the place $\nu$ of $\wt{E}$ is given by
\[\zeta_{\nu}^{ss}(Sh_K,s)=\prod_{\pi_f}L^{ss}(s-r(n-1)/2,\pi_p,r_{-\mu})^{a(\pi_f)dim\pi_f^K}.\]
\end{corollary}
In the case $r=1$ Dat has proved the Weight-Monodromy conjecture for these Shimura varieties, cf. \cite{Dat} 5.2. Then by \cite{Ra} section 2 one can recover the classical Hasse-Weil zeta function.
\begin{corollary}
Let $r=1$ and $K\subset G(\A_f)$ be any sufficiently small open compact subgroup in the situation of the theorem. Then the local Hasse-Weil zeta function of $Sh_K$ at the place $\nu$ of $\wt{E}$ is given by
\[\zeta_{\nu}(Sh_K,s)=\prod_{\pi_f}L(s-r(n-1)/2,\pi_p,r_{-\mu})^{a(\pi_f)dim\pi_f^K}.\]
\end{corollary}

In \cite{Ito} Ito proved the Weight-Monodromy conjecture for the varieties which are $p$-adically uniformized by the maximal level Drinfeld upper half spaces. In theorem 6.2 of loc. cit. an application to the local zeta function was also presented. These varieties are the Galois twisted versions of the connected components of our Shimura varieties studied here with $r=1$ and $K_p$ maximal, see theorem 6.50 of \cite{RZ} and 2.13 of \cite{Va2}.

Shortly after the first version of this paper, Mieda claimed that the Weight-Monodromy conjecture holds true for the general $p$-adically uniformized Shimura varieties studied here, i.e. $r$ is not necessary 1, and one can argue as in section 3 to prove the above theorem 1.1, cf. \cite{Mie}. In particular in corollary 1.3 $r$ can be an arbitrary positive integer. However, as the reader can see in this introduction, our general purpose is trying to prove results for as many cases as possible. Our theory of test functions developed in sections 4-6 by Scholze's method, will be used in \cite{Sh} in an essential way to prove results for some other Shimura varieties.

We give an overview of the content of this article. In section 2 we introduce the $p$-adically uniformized Shimura varieties which we are interested in. In section 3 we deduce the cohomology of the Shimura varieties uniformized by one Drinfeld moduli spaces by using the results of Boyer and Dat. Then in section 4 we define the test functions $\phi_{\tau,h}$ by Scholze's method, and list their properties. Here we just state the facts since the arguments and proofs are the same as those in \cite{Sch3}. In section 5 we prove the vanishing property of these test functions by global method, and establish the trace formula as a sum over the set of equivalent Kottwitz triples. In section 6 we use the formula for $H_\xi$ in the case $r=1$ proved in section 3 to deduce the character identity of the transfers $f_{\tau, h}$ of $\phi_{\tau,h}$. Finally in section 7 we deduce the theorem for the general case. Corollaries for the local (semisimple) zeta functions of Shimura varieties are stated.\\
 \\
\textbf{Acknowledgments.} I would like to thank Peter Scholze sincerely, since without his encouragement and suggestions this work would not be finished. I want to thank Yoichi Mieda and Michael Rapoport for their useful remarks. I should also thank the referee for careful reading and valuable suggestions. This work started while the author was a postdoc at the Mathematical Institute of the University of Bonn, and was finished after the author moved to the University of Regensburg. This work was supported by the SFB/TR 45 ``Periods, Moduli Spaces and Arithmetic of Algebraic Varieties'' and the SFB 1085 ``Higher Invariants'' of the DFG.

\section{Some $p$-adically uniformized Shimura varieties}
We now introduce some unitary Shimura varieties which admit $p$-adic uniformization by finite products of Drinfeld upper half spaces. They were first introduced by Rapoport-Zink \cite{RZ} and Varshavsky \cite{Va1, Va2} as higher dimensional generalization of the Cherednik's theorem for Shimura curves as presented in \cite{BC} and \cite{D}. We note that some special higher dimensional cases already appeared in \cite{Ra}.

Let $p$ be a prime number. Fix an imaginary quadratic field $K$ in which $p$ splits. The two primes of $K$ above $p$ will be denoted by $u$ and $u^c$, and the complex conjugation of $Gal(K/\Q)$ will be denoted by $c$. Let $F^+|\Q$ be a totally real field of degree $N$. Set $F=F^+K$, so that $F$ is a CM-field with maximal totally real subfield $F^+$. Let $\varpi_1, \vp_2,\dots, \vp_s$ denote the primes of $F$ above $u$, and let $\nu_1,\nu_2,\dots,\nu_s$ denote their restrictions to $F^+$. Fix an integer $1\leq r\leq s$. Let $B/F$ denote a central division algebra of dimension $n^2$ over $F$ such that
\begin{itemize}
\item the opposite algebra $B^{op}$ is isomorphic to $B\otimes_{K,c}K$;
\item at any place $x$ of $F$ which is not split over $F^+$, $B_x$ is split (here and in the following $B_x=B\otimes F_x$);
\item at the places $\vp_1,\dots,\vp_r, \vp_1^c,\dots,\vp_r^c$ (for $1\leq i\leq s, \vp_i^c$ is the place over $u^c$ which induces also $\nu_i$ on $F^+$) $B$ is ramified with invariants
    \[inv_{\vp_i}B=\frac{1}{n}, \, inv_{\vp_i^c}B=-\frac{1}{n};\]
\item at the places $\vp_{r+1},\dots,\vp_s,\vp_{r+1}^c,\dots,\vp_s^c$ the invariants of $B$ are arbitrary but satisfy \[inv_{\vp_i}B=-inv_{\vp_i^c}B.\]
\end{itemize}
Note that we assume in particular $s=t$ by the notation of \cite{RZ} 6.38.

We assume that there is an involution of second kind $\ast$ on $B$. Moreover, we can choose some alternating pairing $\lan,\ran$ on $V\times V\ra \Q$ for the $B\otimes_FB^{op}$ module $V:=B$, which corresponds to another involution of second kind $\sharp$ on $B$. The associated reductive group $G/\Q$ is defined by
\[G(R)=\{(g,\lambda)\in (B^{op}\otimes_\Q R)^\times\times R^\times|\,gg^\sharp=\lambda\},\]
for any $\Q$-algebra $R$. Let $G_1$ be the kernel of the map $G\ra \mathbb{G}_m, (g,\lambda)\mapsto \lambda$, which can be viewed as a group over $F^+$. For $1\leq i\leq r$,  choose distinguished embeddings $\tau_i: F^+\hookrightarrow \R$. As in \cite{RZ} 6.40, we assume that we can make the choice of the alternating pairing on $V\times V\ra \Q$ and the isomorphism $\iota: \ov{\Q}_p\simeq \C$ such that
\begin{itemize}
\item if $\sigma: F^+\hookrightarrow \R$ is an embedding, then $G_1\times_{F^+,\sigma}\R$ is isomorphic to the unitary group $U(1,n-1)$ if $\sigma=\tau_i$ for $1\leq i\leq r$ and $U(n)$ otherwise;
\item under the bijection $Hom(F^+,\C)=Hom(F^+,\R)\ra Hom(F^+,\ov{\Q}_p)$ induced by $\iota$, $\tau_1,\dots,\tau_r$ induce the primes $\nu_1,\dots,\nu_r$ of $F^+$ above $p$.
\end{itemize}
Let $B_{\vp_i}^\times$ be the local reductive group over $F_{\vp_i}$ associated to the units in $B_{\vp_i}$ for $1\leq i\leq r$. Then under our assumptions we have
\[G_{\Q_p}\simeq Res_{F_{\vp_1}}B_{\vp_1}^\times \times\cdots\times Res_{F_{\vp_r}}B_{\vp_r}^\times \times G_{D'}\times \G_m\]with obvious definition of the factor $G_{D'}$ which is associated to the semisimple algebra $D'=\prod_{i=r+1}^sB_{\vp_i}$ over $\Q_p$. Let $E$ be the composition of the fields $F_{\vp_1},\dots,F_{\vp_r}$, which will be our local reflex field.

As in \cite{RZ} 6.37 we have a homomorphism $h: Res_{\C|\R}\G_m\ra G_\R$ which corresponds to our signature condition. Then the datum $(G, h^{-1})$ defines a projective Shimura variety $Sh_K$ over $E$ for any compact open subgroup $K\subset G(\A_f)$, cf. \cite{Ko2}. The conjugacy class of the cocharacter $\mu: \G_{m}\lra G_{\ov{\Q}_p}$ associated to $h$ is defined over $E$, cf. \cite{RZ} 6.40. For sufficiently small open compact subgroup $K^p\subset G(\A_f^p)$, we have a projective scheme $S_{K^p}$ over $O_E$ (the integer ring of $E$) which is a moduli space of some abelian varieties with additional structures. This moduli space is defined in a similar way as those introduced in \cite{Ko2}, but contrary to the later case, it is not smooth. For any locally noetherian scheme $S$ over $O_E$, $S_{K^p}(S)$ is the isomorphism classes of quadruples $(A,\lambda,\iota,\ov{\eta}^p)$ consisting of
\begin{itemize}
\item a projective abelian scheme $A$ over $S$ up to prime-to-$p$ isogeny,
\item a polarization $\lambda: A\ra A^D$ of degree prime to $p$ (here and in the following, the upper subscript $D$ means the Cartier dual),
\item a homomorphism $\iota: O_B\ra End(A)$ satisfying the determinant condition and compatible with $\lambda$,
\item a level structure $\ov{\eta}^p$ of type $K^p$.
\end{itemize} For more details we refer to \cite{RZ} definition 6.9. As usual on generic fibers we have the isomorphism
\[S_{K^p}\times_{O_E}E\simeq \coprod_{ker^1(\Q,G)}Sh_{K_p^0K^p},\]where $K_p^0\subset G(\Q_p)$ is the maximal open compact subgroup $O_{B_{\vp_1}}^\times\times\cdots\times O_{B_{\vp_r}}^\times\times O_{D'}^\times\times\Z_p^\times$.

We set $D_i:=B_{\vp_i}$ from now on. For any locally noetherian scheme $S$ over $O_E$ on which $p$ is locally nilpotent, let $A/S$ be an abelian scheme coming from an $S$-valued point of $S_{K^p}$. Looking at its $p$-divisible group, we get a decomposition
\[A[p^\infty]=(H_1\oplus \cdots\oplus H_r \oplus H')\oplus (H_1\oplus \cdots\oplus H_r \oplus H')^D,\] where for $1\leq i\leq r$, $H_i$ is a $\vp_i$-divisible $O_{D_i}$-module, $H'$ is a $D'$-group in the sense of definition 4.1 of \cite{Sch2} which is the sum of the \'etale $\vp_i$-divisible $O_{D_i}$-modules for $i=r+1,\dots,s$. In particular, after fixing a point in the special fiber of $S_{K^p}$ we can consider the associated (formal) Rapoport-Zink space $\wh{\M}$, which has the decomposition (cf. \cite{RZ} proposition 6.49)
\[\begin{split}
\wh{\M}&\simeq \wh{\M}_{Dr,\vp_1}\times\cdots\times\wh{\M}_{Dr,\vp_r}\times G_{D'}(\Q_p)/O_{D'}^\times \times \Q_p^\times/\Z_p^\times\\
&\simeq (\prod_{i=1}^r \wh{\Omega}^n_{F_{\vp_i}}\times SpfO_{\check{E}}\times D_i^\times/K_{p,\vp_i}^0)\times G_{D'}(\Q_p)/O_{D'}^\times \times \Q_p^\times/\Z_p^\times\\
&\simeq (\prod_{i=1}^r \wh{\Omega}^n_{F_{\vp_i}}\times SpfO_{\check{E}})\times G(\Q_p)/K_p^0,
\end{split}\]
where for $i=1,\dots,r$, $\wh{\M}_{Dr,\vp_i}$ is the formal Drinfeld moduli space associated to the local data, $K_{p,\vp_i}^0\subset D^\times_i$ is the maximal open compact subgroup and $\wh{\Omega}^n_{F_{\vp_i}}$ is the formal Drinfeld upper half space over $SpfO_{F_{\vp_i}}$. Here $\check{E}$ is the completion of the maximal unramified extension of $E$. The associated reductive group $J_b$ has the form (cf. \cite{RZ} 6.44 p. 310, 6.46 and 6.49) \[J_b\simeq Res_{F_{\vp_1}}GL_n\times\cdots\times Res_{F_{\vp_r}}GL_n\times G_{D'}\times \G_m.\]
Now we have the following theorem which says that our Shimura varieties admit global $p$-adic uniformization.
\begin{theorem}[\cite{RZ} Theorem 6.50]
As $K^p$ varies, there is a $G(\A_f^p)$-equivariant isomorphism of formal schemes
\[\coprod_{ker^1(\Q,G)}I(\Q)\setminus(\prod_{i=1}^r \wh{\Omega}^n_{F_{\vp_i}}\times SpfO_{\check{E}})\times G(\A_f)/K \simeq \wh{S}_{K^p}\times SpfO_{\check{E}},\]
where $K=K_p^0K^p$ and $\wh{S}_{K^p}$ is the formal completion of $S_{K^p}$ along its special fiber. The group $I$ is an inner form of $G$ over $\Q$ such that $I(\Q_p)=J_b(\Q_p),\, I(\A_f^p)=G(\A_f^p)$. This defines the action of $I(\Q)$ used in forming the quotient above. The natural descent datum on the right hand side induces on the left hand side the natural descent datum on the first $r$ factors (under the above decomposition for $\wh{\M}$) multiplied with an action of some explicit element $g\in G(\Q_p)$ on $G(\A_f)/K$.
\end{theorem}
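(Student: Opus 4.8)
The plan is to deduce the statement from the general non-archimedean uniformization theorem of Rapoport--Zink (\cite{RZ} Theorem 6.30) applied to the PEL datum $(B,\ast,V,\lan,\ran)$ fixed above, combined with the explicit description of the associated Rapoport--Zink space recalled in \cite{RZ} Proposition 6.49. The first step is to observe that the integral model $S_{K^p}$ over $O_E$ introduced above \emph{is} the moduli scheme $\M_{K^p}$ of \cite{RZ} Chapter 6 attached to this datum: both classify projective abelian schemes up to prime-to-$p$ isogeny equipped with an $O_B$-action satisfying the determinant condition, a prime-to-$p$ polarization compatible with $\ast$, and a $K^p$-level structure, so the two moduli problems literally coincide. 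It therefore remains to check that this datum is of "uniformization type" in the sense of loc. cit.\ and to unwind the resulting formula.

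The heart of the matter is to show that every point of the special fiber $S_{K^p}\times_{O_E}\ov{\F}_p$ lies in the basic locus, so that the uniformization morphism is an isomorphism onto all of $\wh{S}_{K^p}$ rather than merely onto an open (or closed) formal subscheme. For a point $(A,\lambda,\iota,\ov{\eta}^p)$ decompose $A[p^\infty]=(H_1\oplus\cdots\oplus H_r\oplus H')\oplus(\cdots)^D$ as above. Because $B$ is ramified at each $\vp_i$ with invariant $1/n$, the determinant (Kottwitz) condition forces $H_i$ to be a special formal $O_{D_i}$-module in the sense of Drinfeld, and all such modules over $\ov{\F}_p$ are isogenous; the remaining factor $H'$ is étale (a $D'$-group in the sense of \cite{Sch2} Definition 4.1), hence also rigid up to isogeny, and the similitude component is $\mu_{p^\infty}$. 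Consequently the isogeny class of $A[p^\infty]$ with its additional structures is constant over the special fiber: there is a single $\sigma$-conjugacy class $b$, it is basic, and there is no non-trivial Newton stratification. This is the step I expect to be the main obstacle, since it is the one genuinely specific to the Drinfeld-type datum here; everything else is formal once it is in place.

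Granting this, \cite{RZ} Theorem 6.30 yields a $G(\A_f^p)$-equivariant isomorphism of formal schemes
\[\coprod_{ker^1(\Q,G)} I(\Q)\setminus\bigl(\wh{\M}\times G(\A_f^p)/K^p\bigr)\;\simeq\;\wh{S}_{K^p}\times Spf\,O_{\check{E}},\]
where $I$ is the inner form of $G$ over $\Q$ characterized by $I(\Q_p)\simeq J_b(\Q_p)$ and $I(\A_f^p)\simeq G(\A_f^p)$; here $J_b$ is the group of quasi-isogenies of the fixed $p$-divisible group, computed above to be $Res_{F_{\vp_1}}GL_n\times\cdots\times Res_{F_{\vp_r}}GL_n\times G_{D'}\times\G_m$, and at the archimedean place $I$ becomes anisotropic modulo centre because the signatures $(1,n-1)$ along the distinguished $\tau_i$ are exchanged for $(n)$. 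Substituting the decomposition $\wh{\M}\simeq\bigl(\prod_{i=1}^r\wh{\Omega}^n_{F_{\vp_i}}\times Spf\,O_{\check{E}}\bigr)\times G(\Q_p)/K_p^0$ from \cite{RZ} Proposition 6.49 and using $G(\A_f^p)/K^p\times G(\Q_p)/K_p^0=G(\A_f)/K$ with $K=K_p^0K^p$ gives exactly the asserted formula.

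Finally one tracks the Weil descent datum. On each Drinfeld factor $\wh{\Omega}^n_{F_{\vp_i}}\times Spf\,O_{\check E}$ it is the natural descent datum to $O_{F_{\vp_i}}$, hence to $O_E$; on the discrete factor $G(\Q_p)/K_p^0$ the Frobenius acts, by the analysis in \cite{RZ} Chapter 6 (cf.\ the discussion preceding Proposition 6.49), by left translation by the image in $G(\Q_p)$ of a canonical element of $J_b(\Q_p)$ built from the value of the Shimura cocharacter $\mu$ at $p$ (equivalently, acting through the similitude factor) --- this is the "explicit element $g\in G(\Q_p)$" in the statement. Effectivity of this descent datum on the finite-level quotients, producing the schemes $S_{K^p}$ over $O_E$, is again part of \cite{RZ} Chapter 6. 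Assembling these observations completes the proof.
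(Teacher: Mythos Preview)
The paper does not give a proof of this statement at all; it is quoted verbatim as Theorem 6.50 of \cite{RZ} and used as a black box. Your sketch is a correct outline of how that result is obtained in \cite{RZ} itself: one applies the general uniformization theorem (\cite{RZ} Theorem 6.30) after checking that the special fiber is entirely basic, and then plugs in the product decomposition of $\wh{\M}$ from \cite{RZ} Proposition 6.49. The key Drinfeld-specific input---that the invariants $1/n$ at $\vp_1,\dots,\vp_r$ force each $H_i$ to be a special formal $O_{D_i}$-module, hence of a single isogeny type---is exactly what makes the uniformization global rather than only over a Newton stratum, and you have identified this correctly.
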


Passing to rigid analytic fibers, we get a rigid analytic uniformization of these Shimura varieties $Sh_{K_p^0K_p}$. Moreover, in the rigid analytic setting, we have the uniformization for arbitrary levels at $p$. More precisely, let $m\geq 1$ be an integer, we consider open compact subgroups of the form
\[K_p^m=(1+\Pi_1^mO_{D_1})\times\cdots\times(1+\Pi^m_rO_{D_r})\times (1+p^mO_{D'})\times(1+p^m\Z_p)\subset G(\Q_p),\]where $\Pi_i\in D_i$ is a fixed uniformizer for each $1\leq i\leq r$.
Then we have the following $G(\A_f)$-equivariant isomorphism of rigid analytic spaces
\[I(\Q)\setminus (\prod_{i=1}^r\M_{Dr,\vp_i,m})\times G_{D'}(\Q_p)/(1+p^mO_{D'})\times \Q_p^\times/(1+p^m\Z_p)\times G(\A_f^p)/K^p \simeq Sh_{K_p^mK^p}^{rig}\times \check{E},\]
where $\M_{Dr,\vp_i,m}$ is the level $m$ rigid analytic Drinfeld moduli space for each $1\leq i\leq r$. The above isomorphism is compatible with the two descent datums as in the above theorem on both sides.

\section{The cohomology of Shimura varieties I}
In this section we will assume $r=s=1$ and compute the $\ell$-adic cohomology of these $p$-adically uniformized Shimura varieties. The main ingredients are the Hochschild-Serre spectral sequences (\cite{H1}, \cite{F}), Boyer's description of the $\ell$-adic cohomology of the Lubin-Tate tower (\cite{Bo}), the Fargues-Faltings isomorphism for the towers of Lubin-Tate and Drinfeld (\cite{Fa}, \cite{F1}), and Dat's results for extensions of elliptic representations (\cite{Dat}). We note that in the previous works \cite{H1, H2, H3}, Harris had studied the supercuspidal part of the cohomology. In \cite{H1, H3} he did not prove that the associated local Galois representations are given by the local Langlands correspondences. In \cite{H2} he could prove this key fact for some special case $n<p$. In fact, Harris just constructed the local Langlands correspondences in these cases by the cohomology of these $p$-adically uniformized Shimura varieties. Later, in \cite{HT} Harris-Taylor studied the supercuspidal part of the $\ell$-adic cohomology of some other simple Shimura varieties to construct the local Langlands correspondence for $GL_n$ in the general case.

Fix a prime $l\neq p$. In this paragraph $r$ and $s$ are not necessary 1. Let $\xi$ be an irreducible representation of $G$ over $\ov{\Q}_l$. By standard construction, we have a $\ov{\Q}_l$-local system $\mathcal{L}_\xi$ on each Shimura variety $Sh_K$ for $K\subset G(\A_f)$. We are interested in the virtual $G(\A_f)\times W_E$-representation defined by the alternating sum of $\ell$-adic cohomology
\[H_\xi=\sum_{i\geq 0}(-1)^i\varinjlim_{K}H^i(Sh_K\times\ov{\Q}_p, \mathcal{L}_\xi),\]
where $K$ runs over the set of open compact subgroups of $G(\A_f)$.

Let the notations be as in the previous section with $r=s=1$. For $m\geq 1$, we have the rigid Rapoport-Zink space
\[\M_{K_p^m}=\M_{Dr,\vp,m}\times \Q_p^\times/(1+p^m\Z_p).\]Then the $p$-adic uniformization of $Sh_{K_p^mK^p}$ gives rise to the following spectral sequence (cf. \cite{H1} lemma 6, \cite{F} th\'eor\`eme 4.5.12)
\[\tr{Ext}^i_{J_b(\Q_p)}(H_c^{2(n-1)-j}(\M_{K_p^m}\times \C_p, \ov{\Q}_l(n-1)), \mathcal{A}(I)^{K^p}_\xi)\Rightarrow H^{i+j}(Sh_{K_p^mK^p}\times\ov{\Q}_p, \mathcal{L}_\xi),\]where $\mathcal{A}(I)_\xi$ is the space of automorphic forms on $I$ such that each automorphic representation $\Pi\subset \mathcal{A}(I)_\xi$ has archimedean component $\Pi_\infty=\check{\xi}$, the dual representation of $\xi$; $\mathcal{A}(I)^{K^p}_\xi$ is the $K^p$-invariant subspace, and the Ext is taken in the category of smooth $\ov{\Q}_l$-representations of $J_b(\Q_p)$. For any $\Pi\subset \mathcal{A}(I)_\xi$, write its $p$-component as $\Pi_p$ and the restricted tensor product of its finite components outside $p$ as $\Pi^p$.  Taking direct limits for levels on both sides and passing to the alternating sum, we get the equalities of virtual representations of $G(\A_f)\times W_E$ (cf. \cite{F} corollaire 4.6.3)
\[\begin{split}H_\xi&=\sum_{i,j\geq 0} (-1)^{i+j}\varinjlim_{m}\tr{Ext}^i_{J_b(\Q_p)}(H_c^{j}(\M_{K_p^m}\times \C_p, \ov{\Q}_l(n-1)), \mathcal{A}(I)_\xi)\\
&=\sum_{\begin{subarray}{c}i,j\geq 0\\ \Pi\subset \mathcal{A}(I)_\xi\end{subarray}} (-1)^{i+j}\varinjlim_{m}\tr{Ext}^i_{J_b(\Q_p)}(H_c^{j}(\M_{K_p^m}\times\C_p, \ov{\Q}_l(n-1)),\Pi_p)\otimes\Pi^p.\end{split}\]
This formula can also be viewed as an analogue of Mantovan's formula, cf. \cite{Man} and the last paragraph of this section.
We would like to rewrite the above last formula in a slightly finer form. Recall that under our assumption $r=s=1$, the local reflex field $E=F_\vp$ and the local reductive group has the form $G_{\Q_p}=Res_{E|\Q_p}D^\times\times\G_m$. The cocharacter $\mu$ associated to the Shimura data factors as $(\mu_0,\mu_1)$ with $\mu_0$ (resp. $\mu_1$) the cocharacter of $Res_{E|\Q_p}D^\times$ (resp. $\G_m$). Recall associated to the cocharacter $\mu$ we have the representation $r_{-\mu}$ of the Langlands dual group $^L(G_{E})$ introduced in \cite{Ko1} lemma 2.1.2. We have also representations $r_{-\mu_0}$ and $r_{-\mu_1}$ associated to $\mu_0$ and $\mu_1$ respectively. An irreducible smooth representation $\Pi_p$ of $I(\Q_p)=J_b(\Q_p)$ decomposes as $\Pi_{p,0}\otimes\chi_p$ where $\Pi_{p,0}$ is an irreducible smooth representation of $GL_n(E)$ and $\chi_p$ is a character of $\Q_p^\times$.
\begin{lemma}
We have the equality
\[H_\xi=\sum_{\begin{subarray}{c}i,j\geq 0\\ \Pi\subset \mathcal{A}(I)_\xi\end{subarray}} (-1)^{i+j}\varinjlim_{m}\tr{Ext}^i_{GL_n(E)}(H_c^{j}(\M_{Dr,m}\times \C_p, \ov{\Q}_l(n-1)), \Pi_{p,0})\otimes r_{-\mu_1}\circ\varphi_{\chi_p}|_{W_{E}}\otimes\chi_p\otimes\Pi^{p},\]where $\varphi_{\chi_p}: W_{\Q_p}\lra\, ^L(\G_m)$ is the Langlands parameter associated to $\chi_p$.
\end{lemma}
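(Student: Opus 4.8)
The plan is to reduce everything to the product decomposition $\M_{K_p^m}=\M_{Dr,\vp,m}\times\Q_p^\times/(1+p^m\Z_p)$ recorded above, to split off the $\G_m$-factor, and to compute its contribution by hand; the Drinfeld factor is left untouched and will be treated by the results of Boyer and Dat in the sequel.

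First I would decompose the cohomology. Since $\Q_p^\times/(1+p^m\Z_p)$ is a $0$-dimensional (discrete) space, the K\"unneth formula for compactly supported $\ell$-adic cohomology yields a $GL_n(E)\times\Q_p^\times\times W_E$-equivariant isomorphism
\[H_c^{j}(\M_{K_p^m}\times\C_p,\ov{\Q}_\ell(n-1))\;\simeq\;H_c^{j}(\M_{Dr,\vp,m}\times\C_p,\ov{\Q}_\ell(n-1))\otimes H_c^{0}(\Q_p^\times/(1+p^m\Z_p),\ov{\Q}_\ell),\]
only the degree $0$ part of the second factor surviving, and the Tate twist being entirely carried by the $(n-1)$-dimensional Drinfeld factor. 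Here $GL_n(E)$ (the relevant factor of $J_b(\Q_p)$) acts on the first tensor factor; the $\Q_p^\times$-factor of $J_b(\Q_p)$ acts on $H_c^{0}(\Q_p^\times/(1+p^m\Z_p),\ov{\Q}_\ell)$, the compactly supported locally constant functions, by translation; the Hecke action of $G(\Q_p)=Res_{E|\Q_p}D^\times\times\G_m$ acts through its $\G_m$-factor, again by translation on the second factor; and, by Theorem 2.1, the Weil descent datum acts diagonally, its action on the second factor being the natural Galois action on the level-$m$ structures of the multiplicative $\G_m$-part (i.e. the action on primitive $p^m$-th roots of unity), composed with translation by the $\G_m$-component of the explicit element $g\in G(\Q_p)$ of Theorem 2.1.

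Next I would perform the homological algebra. Plugging the above into the formula for $H_\xi$ immediately preceding the lemma, and using that for the direct product $J_b(\Q_p)=GL_n(E)\times\Q_p^\times$ and the external tensor product $\Pi_p=\Pi_{p,0}\otimes\chi_p$ one has a (degenerating) K\"unneth spectral sequence
\[\mathrm{Ext}^{i}_{J_b(\Q_p)}\big(A\otimes B,\ \Pi_{p,0}\otimes\chi_p\big)\;\simeq\;\bigoplus_{a+b=i}\mathrm{Ext}^{a}_{GL_n(E)}(A,\Pi_{p,0})\otimes\mathrm{Ext}^{b}_{\Q_p^\times}(B,\chi_p),\]
the sign $(-1)^{i+j}$ factoring accordingly, the lemma is reduced to computing, $G(\Q_p)\times W_E$-equivariantly,
\[\varinjlim_m\mathrm{Ext}^{b}_{\Q_p^\times}\big(H_c^{0}(\Q_p^\times/(1+p^m\Z_p),\ov{\Q}_\ell),\ \chi_p\big).\]
The transition maps being the obvious ones, the limit of the coefficient modules is $C_c^\infty(\Q_p^\times,\ov{\Q}_\ell)$, the compact induction to $\Q_p^\times$ of the trivial representation of the trivial subgroup; this is a projective object in the category of smooth representations, so the higher $\mathrm{Ext}$ vanish and the $0$-th is $\mathrm{Hom}_{\Q_p^\times}(C_c^\infty(\Q_p^\times,\ov{\Q}_\ell),\chi_p)$, a one-dimensional space by Frobenius reciprocity. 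One reads off that the residual Hecke action of $\G_m(\Q_p)=\Q_p^\times$ on this line is $\chi_p$.

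The main point — and the only nonformal step — is the identification of the residual $W_E$-action on this line with the character $r_{-\mu_1}\circ\varphi_{\chi_p}|_{W_E}$. This is precisely the $GL_1$-case of the expected relation between the cohomology of Rapoport--Zink spaces and Kottwitz's representation $r_{-\mu}$: the $W_E$-action on $C_c^\infty(\Q_p^\times,\ov{\Q}_\ell)$ is translation through a homomorphism $W_E\to\Q_p^\times$ given by local class field theory (geometric Frobenius normalization), its ``valuation part'' being governed by the element $g$ of Theorem 2.1 and its ``unit part'' by the cyclotomic action on the roots of unity entering the level structures, and this is arranged so that, after taking $\mathrm{Hom}$ against $\chi_p$, one obtains the character of $W_E$ attached to $\chi_p$ by local class field theory, restricted along $W_E\hookrightarrow W_{\Q_p}$; unwinding that $\G_m$ is split (so $^L(\G_m)=\G_m\times W_{\Q_p}$ with $W$ acting trivially) and that $\mu_1$ is the identity cocharacter, this is exactly $r_{-\mu_1}\circ\varphi_{\chi_p}|_{W_E}$. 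The hard part is thus purely a matter of reconciling three normalizations — the explicit element $g$ of Theorem 2.1, Kottwitz's normalization of $r_{-\mu_1}$ in \cite{Ko1} Lemma 2.1.2, and the normalization of local class field theory implicit in $\varphi_{\chi_p}$ and in the map $W_E\to\Z$ — after which the lemma follows.
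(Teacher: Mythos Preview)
Your proposal is correct and follows essentially the same route as the paper: both arguments use the product decomposition $\M_{K_p^m}=\M_{Dr,\vp,m}\times\Q_p^\times/(1+p^m\Z_p)$ to factor the alternating $\mathrm{Ext}$-sum (the paper packages this as $\mathrm{Mant}_{\mu}(\Pi_p)=\mathrm{Mant}_{\mu_0}(\Pi_{p,0})\otimes\mathrm{Mant}_{\mu_1}(\chi_p)$), and then identify the $\G_m$-contribution with $r_{-\mu_1}\circ\varphi_{\chi_p}|_{W_E}\otimes\chi_p$ via local class field theory. The only difference is presentational: where you compute the $\G_m$-factor by hand (K\"unneth, projectivity of $C_c^\infty(\Q_p^\times)$, Frobenius reciprocity, and a normalization check), the paper simply invokes local class field theory and points to the $n=1$ case of \cite{HT} (via \cite{Shin}, proof of Theorem~7.5) for the identification $\mathrm{Mant}_{\mu_1}(\chi_p)=r_{-\mu_1}\circ\varphi_{\chi_p}|_{W_E}\otimes\chi_p$.
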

\begin{proof}
Indeed, by the notation of \cite{Shin}
\[\tr{Mant}_{\mu}(\Pi_p):=\sum_{i,j\geq 0}(-1)^{i+j}\varinjlim_{m}\tr{Ext}^i_{J_b(\Q_p)}(H_c^{j}(\M_{K_p^m}\times \C_p, \ov{\Q}_l(n-1)),\Pi_p),\]we have
\[\tr{Mant}_{\mu}(\Pi_p)=\tr{Mant}_{\mu_0}(\Pi_{p,0})\otimes\tr{Mant}_{\mu_1}(\chi_p),\]with similar definitions of
$\tr{Mant}_{\mu_0}(\Pi_{p,0})$ and $\tr{Mant}_{\mu_1}(\chi_p)$ using the corresponding Rapoport-Zink spaces. This equality comes from the decomposition of our Rapoport-Zink spaces $\M_{K_p^m}$. Now local class field theory tells us that
\[\tr{Mant}_{\mu_1}(\chi_p)=r_{-\mu_1}\circ\varphi_{\chi_p}|_{W_{E}}\otimes\chi_p,\]which can also be viewed as a corollary of the results of \cite{HT} in the case $n=1$, cf. \cite{Shin} proof of theorem 7.5. Now the lemma follows.
\end{proof}

We need to know the $\ell$-adic cohomology of the tower of Drinfeld spaces. To this end, consider the tower of Lubin-Tate spaces $(\M_{LT,K})_{K\subset GL_n(E)}$, where $K$ runs over the set of open compact subgroups of $GL_n(E)$ contained in $GL_n(O_E)$. We have the following isomorphism.
\begin{theorem}[\cite{Fa}, \cite{F1}]
For each $i\geq 0$, there is a $GL_n(E)\times D^\times \times W_E$-equivariant isomorphism of groups
\[\varinjlim_{K}H_c^i(\M_{LT,K}\times\C_p,\ov{\Q}_l)\simeq \varinjlim_{m}H_c^i(\M_{Dr,m}\times\C_p,\ov{\Q}_l).\]
\end{theorem}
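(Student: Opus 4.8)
The plan is to prove the isomorphism first at \emph{infinite level}, where the Lubin--Tate and Drinfeld towers literally coincide as adic spaces, and then to transfer the statement to $\ell$-adic cohomology by a continuity argument along the tower. So I would begin by passing to the infinite-level spaces $\M_{LT,\infty}:=\varprojlim_K\M_{LT,K}$ and $\M_{Dr,\infty}:=\varprojlim_m\M_{Dr,m}$, formed in the category of adic spaces over $\C_p$ (they are in fact perfectoid, by the work of Scholze--Weinstein, although perfectoidness is not essential for the argument). Each of them carries a natural action of $GL_n(E)\times D^\times\times W_E$: over $\M_{LT,\infty}$ the group $GL_n(E)$ acts through trivializations of the rational Tate module $V_pH\cong E^n$, the group $D^\times=J_b$ acts as the group of self-quasi-isogenies of the framing object, and $W_E$ acts by the Weil descent datum; over $\M_{Dr,\infty}$ the roles of $GL_n(E)$ and $D^\times$ are interchanged. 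Moreover $\varinjlim_KH_c^i(\M_{LT,K}\times\C_p,\ov{\Q}_l)$ identifies with $H_c^i(\M_{LT,\infty}\times\C_p,\ov{\Q}_l)$ as a $GL_n(E)\times D^\times\times W_E$-representation, and likewise on the Drinfeld side, since both infinite-level spaces are taut and the transition maps in both towers are finite \'etale (here the absence of properness on the Lubin--Tate side is precisely what forces the use of $H_c$ rather than ordinary cohomology).

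The heart of the matter is then the Faltings isomorphism
\[\M_{LT,\infty}\;\st{\sim}{\lra}\;\M_{Dr,\infty},\]
equivariant for $GL_n(E)\times D^\times\times W_E$. The mechanism is the self-duality of $p$-divisible groups combined with the rigidity of the universal cover. A point of $\M_{LT,\infty}$ over an affinoid base is a one-dimensional formal $O_E$-module $H$ of $O_E$-height $n$, together with a quasi-isogeny to the framing object and a trivialization of $V_pH$; passing to the universal cover $\wt{H}=\varprojlim_{[p]}H$, which depends only on the isogeny class, the datum becomes symmetric between the $GL_n$-side and the $D^\times$-side, and by Faltings' construction (a suitable extension/pushout of $p$-divisible groups built from the trivialization, dualizing to produce a special formal $O_D$-module with its own trivialization) one recovers a point of $\M_{Dr,\infty}$. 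One checks that the resulting map is an isomorphism by exhibiting the inverse in the same fashion, that it exchanges the two group actions as claimed, and that it intertwines the two Weil descent data, the last point by the explicit description of those descent data through Frobenius on the $p$-divisible groups. Alternatively, in the Scholze--Weinstein formalism one identifies both infinite-level towers with quotients of a single perfectoid space parametrizing modifications of vector bundles / $p$-divisible groups over $O_{\C_p}$, for which the two Grothendieck--Messing and de Rham period maps to $\mathbb{P}^{n-1}$ make the symmetry between the two sides transparent; the classification of $p$-divisible groups over $O_{\C_p}$ then yields the isomorphism with all its equivariances.

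Granting the infinite-level isomorphism, the proof concludes by applying it to $H_c^i(-\times\C_p,\ov{\Q}_l)$ and using the identifications of the first paragraph: one obtains the asserted $GL_n(E)\times D^\times\times W_E$-equivariant isomorphism
\[\varinjlim_{K}H_c^i(\M_{LT,K}\times\C_p,\ov{\Q}_l)\;\simeq\;\varinjlim_{m}H_c^i(\M_{Dr,m}\times\C_p,\ov{\Q}_l).\]

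The main obstacle is the construction of the infinite-level isomorphism together with its full equivariance. Manipulating $p$-divisible groups and their universal covers over non-classical (perfectoid) bases, and keeping track of how level structures, quasi-isogenies and Weil descent data transform under the duality construction, is delicate; in particular the verification of $W_E$-equivariance — that Faltings' construction commutes with the Frobenius-twisted descent data on both towers — is the subtlest compatibility to pin down. By contrast the formation of the infinite-level spaces and the continuity of compactly supported $\ell$-adic cohomology along the finite-\'etale tower are comparatively formal.
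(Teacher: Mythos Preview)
The paper does not prove this theorem; it is quoted as a result from the literature, attributed to Faltings \cite{Fa} and Fargues \cite{F1}, and is used as a black box in the argument of section~3. So there is no proof in the paper to compare your proposal against.

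That said, your strategy is a correct route to the result, though it is not the one taken in the original references. Faltings' argument in \cite{Fa} and Fargues' in \cite{F1} predate perfectoid spaces and the Scholze--Weinstein classification; they work essentially at finite level, constructing the duality between the two towers via explicit moduli-theoretic constructions (Faltings builds the isomorphism from a universal extension/pushout of $p$-divisible groups, and Fargues reinterprets and extends this in the Rapoport--Zink framework). Your approach---passing to infinite level, invoking the perfectoid isomorphism $\M_{LT,\infty}\simeq\M_{Dr,\infty}$, and then identifying the colimit of compactly supported cohomology with the cohomology of the infinite-level space---is the Scholze--Weinstein repackaging of the same phenomenon. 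It is conceptually cleaner and makes the $GL_n(E)\times D^\times$ symmetry manifest, at the cost of importing the machinery of adic and perfectoid spaces. One point to be careful about: the identification of $\varinjlim_K H^i_c$ at finite level with $H^i_c$ of the perfectoid limit is not entirely formal in the non-proper setting and requires some care (tautness, or a comparison via the intermediate Berkovich/adic formalism), so you should flag precisely which continuity statement you are invoking there.
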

In fact, for $0\leq i\leq n-2$ we know the left hand side vanishes, so the right hand side also vanishes in these cases. We denote both sides by $H_c^i$. In \cite{HT} Harris-Taylor computed the supercuspidal part of the cohomology of Lubin-Tate spaces. Boyer in \cite{Bo} has computed the remaining part. It involves elliptic representations of $GL_n(E)$. Recall an elliptic representation of $GL_n(E)$ is an irreducible smooth representation which has the same supercuspidal support as a discrete series representation. For more details we refer to section 2 of \cite{Dat}. We state Boyer's results in the form as in 4.1 of \cite{Dat}. For any irreducible smooth representation $\pi$ of $GL_n(E)$, let $\sigma(\pi)$ be its associated Weil-Deligne representation of $W_{E}$ by the local Langlands correspondence \cite{HT}. If $\pi$ is a discrete series representation, let $\sigma'(\pi)$ be the unique irreducible sub Weil-Deligne representation of $\sigma(\pi)$. The set of irreducible smooth representations of $D^\times$ will be denoted by $Irr(D^\times)$, and an element of it will be usually denoted by $\rho$, with its contragredient denoted by $\rho^\vee$.
\begin{theorem}[\cite{Dat} Th\'eor\`eme 4.1.2]
For $0\leq i\leq n-1$, there is an isomorphism of $GL_n(E)\times D^\times\times W_E$-representations
\[H_c^{n-1+i}\simeq \bigoplus_{\rho\in Irr(D^\times)}\pi_\rho^{\leq i}\otimes\rho\otimes \sigma'(JL(\rho)^\vee)|-|^{\frac{n/n_\rho-n}{2}-i+\frac{1-n}{2}},\]
where $JL(\rho)$ is the discrete series representation of $GL_n(E)$ associated to $\rho$ by the Jacquet-Langlands correspondence, $\pi_\rho|-|^{\frac{1-n_\rho}{2}}\otimes \cdots \otimes \pi_\rho|-|^{\frac{n_\rho-1}{2}}$ is its supercuspidal support, $n_\rho$ is an integer which divides $n$ such that $\pi_\rho$ is the (supercuspidal) representation of $GL_{n/n_\rho}(E)$, $\pi_\rho^{\leq i}$ is the elliptic representation associated to $i$ with the same supercuspidal support as $\pi_\rho$ (cf. \cite{Dat} 2.1.11 and 4.1.1).
\end{theorem}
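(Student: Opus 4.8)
\medskip
\noindent\textbf{Sketch of a proof.} By the Faltings--Fargues isomorphism (Theorem 3.2) the cohomology of the Lubin--Tate and Drinfeld towers agree equivariantly, so one may compute on whichever side is convenient; I would mostly work on the Drinfeld side, where the base $\wh\Omega^n_E$ and its level covers $\M_{Dr,m}$ are honest rigid spaces with transparent geometry. The plan is a double d\'evissage: first decompose the cohomology along the Bernstein components of the $GL_n(E)$-action to isolate the discrete-series blocks, then climb the tower, feeding in the classical Schneider--Stuhler description of the cohomology of $\wh\Omega^n_E$.

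\emph{Step 1 (reduction to one segment).} For each $i$, $H_c^{n-1+i}$ is a smooth $GL_n(E)\times D^\times\times W_E$-representation; decompose it along inertial classes of $GL_n(E)$. Since the $D^\times$-action commutes with that of $GL_n(E)$ and $D^\times$ is compact modulo its centre, the Jacquet-module constraints on the Lubin--Tate cohomology established in \cite{HT} force a given $\rho\in Irr(D^\times)$ to occur only in the blocks whose cuspidal support is a single Zelevinsky segment, namely the one attached to $\rho$ by Jacquet--Langlands. Hence $H_c^{n-1+i}=\bigoplus_\rho H_c^{n-1+i}[\rho]$, and it suffices, for a fixed $\rho$, to determine which irreducible constituents occur in $\bigoplus_i H_c^{n-1+i}[\rho]$, in which cohomological degree, and with which Frobenius twist.

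\emph{Step 2 (base case and d\'evissage).} When $n_\rho=1$ the discrete series $JL(\rho)$ is supercuspidal and Harris--Taylor's computation is the whole answer: $H_c^{n-1+i}[\rho]=0$ for $i\neq 0$ and $H_c^{n-1}[\rho]=JL(\rho)\otimes\rho\otimes\sigma(JL(\rho)^\vee)|-|^{\frac{1-n}{2}}$, which is exactly the asserted formula (here $\pi_\rho^{\le 0}=\pi_\rho=JL(\rho)$, $n/n_\rho=n$, and $\sigma'(JL(\rho)^\vee)=\sigma(JL(\rho)^\vee)$ has no monodromy). For general $\rho$, fix the supercuspidal $\pi_\rho$ of $GL_{n/n_\rho}(E)$ and its segment of length $n_\rho$. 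The covers $\M_{Dr,m}\to\wh\Omega^n_E$ are \'etale torsors under the finite quotients of $O_D^\times$, so a Hochschild--Serre argument together with Schneider--Stuhler's computation of $H^\ast(\wh\Omega^n_E\times\C_p,\ov{\Q}_l)$ --- one generalized Steinberg representation in each degree, with explicit Frobenius twists --- yields a spectral sequence computing $\bigoplus_i H_c^{n-1+i}[\rho]$ from the cohomology of $\wh\Omega^n_E$ and of the finite covers, i.e. in terms of the Zelevinsky combinatorics of the segment. One then matches both sides: the irreducible constituents of $GL_n(E)$ with cuspidal support the given segment are indexed by subsets of an $(n_\rho-1)$-element set, the elliptic ones among them are precisely the $\pi_\rho^{\le i}$ for $0\le i\le n_\rho-1$, and the cohomological grading produced by the spectral sequence coincides with the monodromy (weight) filtration on the Weil--Deligne representation $\sigma(JL(\rho)^\vee)$, whose $i$-th graded piece is $\sigma'(JL(\rho)^\vee)|-|^{-i}$; the composite twist $|-|^{\frac{n/n_\rho-n}{2}-i+\frac{1-n}{2}}$ assembles from the Frobenius weights of the base cohomology, the half-integral normalization shift between Jacquet--Langlands for $GL_{n/n_\rho}$ and for $GL_n$, and the $|-|^{-i}$ coming from the monodromy filtration.

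\emph{Main obstacle.} The crux is the degeneration in Step 2: showing that the spectral sequence collapses in the purest possible way, so that the elliptic representations $\pi_\rho^{\le i}$ are spread across the consecutive degrees $n-1,\dots,2n-2$ with precisely the stated twists --- equivalently, that the weight filtration on the nearby cycles $R\psi$ of the Drinfeld tower matches the monodromy filtration on $\sigma(JL(\rho))$. A purely local argument runs directly into the geometry of the vanishing cycles on the non-smooth formal model, which is exactly what Boyer analyses by hand. I would instead import a global input: choose a CM field and a division algebra realizing $JL(\rho)$ as a local component of a cohomological automorphic representation of a unitary-type group whose Shimura variety is $p$-adically uniformized by a Drinfeld space, as in Section 2 (the Harris--Taylor simple Shimura varieties are of this type); then the known Langlands--Kottwitz description of the global $\ell$-adic cohomology, its purity (weight--monodromy being available there), and the Ext-spectral sequence coming from the $p$-adic uniformization together force, on unwinding, the degree-by-degree distribution and the twists above. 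This is, in effect, the route of Harris--Taylor and Boyer, reorganized by Dat in the form quoted here.
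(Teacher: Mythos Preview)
The paper does not give its own proof of this statement: Theorem~3.3 is quoted verbatim from \cite{Dat}, Th\'eor\`eme~4.1.2, which is itself a reformulation of Boyer's computation \cite{Bo}. Boyer's actual argument works entirely on the Lubin--Tate side: he analyses the perverse monodromy filtration on the vanishing-cycles complex of the Harris--Taylor Shimura varieties along their Newton stratification, and extracts the local statement from that. So your sketch is not being compared to a proof in this paper but to the Boyer--Dat proof it cites.

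Against that, your proposal contains a factual slip that undermines the crucial Step~2. You write that the Harris--Taylor simple Shimura varieties are ``$p$-adically uniformized by a Drinfeld space, as in Section~2''. They are not: the Harris--Taylor varieties are exactly the \emph{other} class of unitary Shimura varieties the introduction contrasts with ours; at the relevant prime their local model is Lubin--Tate, not Drinfeld, and they are not $p$-adically uniformized. If instead you mean to use the varieties of Section~2, your argument becomes circular: the ``known Langlands--Kottwitz description of the global $\ell$-adic cohomology'' of those varieties is precisely Theorem~3.4, which the paper deduces \emph{from} Theorem~3.3; and the weight--monodromy input you invoke is Dat's result \cite{Dat}~5.2, which again rests on Boyer's theorem. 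So the global leverage you need is simply not available on the Drinfeld side without already assuming what you want to prove.

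The Schneider--Stuhler plus Hochschild--Serre portion of Step~2 is also not a workable route as stated. Schneider--Stuhler gives $H^\ast_c(\wh\Omega^n_E)$, i.e.\ the $O_D^\times$-invariants in the tower; Hochschild--Serre for the $(O_D/\Pi^m)^\times$-cover runs the wrong direction (it computes invariants from the cover, not the cover from the invariants), and there is no evident mechanism here to manufacture the nontrivial $\rho$-isotypic pieces from the trivial one. Boyer's proof avoids this by staying on the Lubin--Tate side and using the genuine geometry of the special fibre of the Harris--Taylor varieties, where the Newton strata supply the needed filtration; that is the content your sketch is missing.
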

Here we have corrected the upper subscript of $|-|$ in \cite{Dat} and \cite{Bo} according to theorem VII.1.5 of \cite{HT}. We note that $\sigma(JL(\rho))$ is the Weil-Deligne representation associated to $\rho\in Irr(D^\times)$ by the local Langlands correspondence for the inner form $D^\times$ of $GL_n$, as expected naturally, cf. \cite{ABPS, HS}.

Combining lemma 3.1 with theorem 3.3, we can now prove the following theorem. Since the local reductive group $G_{\Q_p}$ is a product of inner forms of $GL_n$ (together with $\G_m$), the local Langlands correspondence for it is known (\cite{ABPS, HS}). For any smooth irreducible representation $\pi_p$ of $G(\Q_p)$, let $\varphi_{\pi_p}: W_{\Q_p}\lra\,^L(G_{\Q_p})$ be the associated local Langlands parameter.
\begin{theorem}
With the notations as above, we have an identity
\[H_\xi=\sum_{\pi_f}a(\pi_f)\pi_f\otimes(r_{-\mu}\circ\varphi_{\pi_p}|_{W_E})|-|^{(1-n)/2}\]
as virtual $G(\A_f)\times W_E$-representations. Here $\pi_f$ runs through irreducible admissible representations of $G(\A_f)$, the integer $a(\pi_f)$ is as in \cite{Ko3} p. 657.
\end{theorem}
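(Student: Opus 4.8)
The plan is to feed the explicit cohomology of the Drinfeld tower into the formula of Lemma 3.1. First I would use Theorem 3.2 to replace each $H_c^j(\M_{Dr,m}\times\C_p,\ov{\Q}_l)$ by the common Lubin--Tate/Drinfeld cohomology $H_c^j$, which vanishes for $j\notin[n-1,2(n-1)]$, so that the double sum over $(i,j)$ in Lemma 3.1 becomes finite. Writing $j=n-1+i'$ with $0\le i'\le n-1$ and inserting Boyer's description in the form of Theorem 3.3, the factor $\rho\in Irr(D^\times)$, the Weil--Deligne factor $\sigma'(JL(\rho)^\vee)$ with its $i'$-dependent twist, and the factors $r_{-\mu_1}\circ\varphi_{\chi_p}|_{W_E}$, $\chi_p$, $\Pi^p$ all pull out of $\tr{Ext}^i_{GL_n(E)}(-,\Pi_{p,0})$. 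One is then reduced to computing the virtual $D^\times\times W_E$-representation
\[\tr{Mant}_{\mu_0}(\Pi_{p,0})=\sum_{\rho\in Irr(D^\times)}\Bigl(\sum_{i\ge 0}\sum_{i'=0}^{n-1}(-1)^{i+i'}\dim_{\ov{\Q}_l}\tr{Ext}^i_{GL_n(E)}(\pi_\rho^{\le i'},\Pi_{p,0})\,|-|^{-i'}\Bigr)\cdot\bigl(\rho\otimes\sigma'(JL(\rho)^\vee)\,|-|^{c(\rho)}\bigr),\]
where $c(\rho)$ absorbs the $i'$-independent exponent of Theorem 3.3 together with the Tate twist $\ov{\Q}_l(n-1)$.

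The core of the argument is the evaluation of this sum. By Dat's computation of the $\tr{Ext}$-groups between elliptic representations of $GL_n(E)$ (\cite{Dat} Section 2), $\tr{Ext}^i_{GL_n(E)}(\pi_\rho^{\le i'},\Pi_{p,0})$ vanishes unless $\Pi_{p,0}$ lies in the block of elliptic representations with supercuspidal support that of $\pi_\rho$ --- i.e. $\Pi_{p,0}=\pi_\rho^{\le k}$ for some $k$ --- and then its Euler characteristic is given by an explicit formula of a simplicial nature attached to the underlying segment. I would then verify the key identity: as $i'$ runs over $0,\dots,n-1$, the inner alternating sum together with the twists $|-|^{-i'}$ reassembles the semisimple graded pieces $\sigma'(JL(\rho)^\vee)|-|^{-i'}$ into the full, monodromy-carrying Weil--Deligne representation $\sigma(JL(\rho)^\vee)=\sigma(\rho^\vee)$; equivalently, the filtration of the special Weil--Deligne representation $\sigma(JL(\rho))$ by its semisimplification is ``Koszul-dual'' to the resolution of the elliptic smooth representation $\Pi_{p,0}$ of $GL_n(E)$ by standard modules supported on the segment, which is precisely the mechanism behind the $\tr{Ext}$-pairing. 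Tracking the Tate twist and the normalizations of $r_{-\mu_0}$ and of the local Langlands correspondence for $D^\times$ (defined so that $\sigma(\rho)=\sigma(JL(\rho))$), this yields
\[\tr{Mant}_{\mu_0}(\Pi_{p,0})=\rho\otimes\bigl(r_{-\mu_0}\circ\varphi_\rho|_{W_E}\bigr)\,|-|^{(1-n)/2}\quad\text{if }\Pi_{p,0}=JL(\rho),\]
and $0$ after the alternating sum otherwise. Combined with the $\G_m$-part already isolated in Lemma 3.1, and grouping $\pi_p=\rho\otimes\chi_p$, $\pi^p=\Pi^p$ so that $r_{-\mu}\circ\varphi_{\pi_p}=(r_{-\mu_0}\circ\varphi_\rho)\otimes(r_{-\mu_1}\circ\varphi_{\chi_p})$, this gives each summand the shape $\pi_f\otimes(r_{-\mu}\circ\varphi_{\pi_p}|_{W_E})|-|^{(1-n)/2}$.

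It remains to identify the coefficient $\sum_\Pi 1$, summed over $\Pi\in\mathcal{A}(I)_\xi$ with $\Pi_{p,0}=JL(\rho)$, $\G_m$-component $\chi_p$ and away-from-$p$ component $\Pi^p=\pi^p$, with $a(\pi_f)$ for $\pi_f=\rho\otimes\chi_p\otimes\pi^p$. Here I would use that $I$ is the inner form of $G$ anisotropic at $\infty$ with $I(\A_f^p)=G(\A_f^p)$ and $I(\Q_p)=J_b(\Q_p)$, so $\mathcal{A}(I)_\xi$ is a space of algebraic automorphic forms of weight $\check\xi$ on a definite unitary group; the Jacquet--Langlands transfer (at $p$ via $JL$; at the archimedean places matching the one-dimensional $\check\xi$ of the compact $U(n)$ with the discrete series $L$-packet of $U(1,n-1)$) together with the multiplicity formula for these unitary groups gives $\sum_\Pi 1=a(\pi_f)$ with $a(\pi_f)$ as in \cite{Ko3} p.~657, the signs $e(\pi_\infty)$ entering $a(\pi_f)$ being exactly those produced by the archimedean transfer. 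The finitely many copies indexed by $ker^1(\Q,G)$ are treated in the usual way. Taking the limit over $K^p$ and passing to the alternating sum yields the asserted identity as virtual $G(\A_f)\times W_E$-representations, which in this case refines the statement of the main theorem.

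The step I expect to be the main obstacle is the local identity of the second paragraph: showing that Dat's alternating sum of $\tr{Ext}$-groups between elliptic representations, weighted by the explicit twists from Boyer's formula, reconstitutes exactly $r_{-\mu_0}\circ\varphi_\rho|_{W_E}$ --- with the monodromy correctly filled in and with the normalizing twist exactly $|-|^{(1-n)/2}$, not off by a shift or a sign. This forces one to pin down at once all the normalizations: the Tate twist $\ov{\Q}_l(n-1)$, arithmetic versus geometric Frobenius and the residue degree of $E/\Q_p$, the precise form of $r_{-\mu}$ from \cite{Ko1}, and the local Langlands correspondence for the division algebra $D^\times$. By contrast, the coefficient matching of the third paragraph, while requiring care with the archimedean signs, is essentially the standard trace-formula comparison for these unitary groups and should present no conceptual difficulty.
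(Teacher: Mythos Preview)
Your overall strategy matches the paper's: start from Lemma~3.1, plug in Theorems~3.2 and~3.3, and reduce to the computation of $\tr{Mant}_{\mu_0}(\Pi_{p,0})$ via Dat's $\tr{Ext}$-calculations. The gap is in your local claim
\[
\tr{Mant}_{\mu_0}(\Pi_{p,0})=
\begin{cases}
\rho\otimes(r_{-\mu_0}\circ\varphi_\rho|_{W_E})|-|^{(1-n)/2}&\text{if }\Pi_{p,0}=JL(\rho),\\
0&\text{otherwise.}
\end{cases}
\]
This is false. By Dat's Proposition~2.1.17, for each $k$ the group $\tr{Ext}^i_{GL_n(E)}(\pi_\rho^{\le k},\Pi_{p,0})$ is one-dimensional in exactly one degree $i=i(k,\Pi_{p,0})$ whenever $\Pi_{p,0}$ is elliptic with the same supercuspidal support. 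The relevant unitary elliptic representations are not only the discrete series $JL(\rho)=\pi_\rho^{\le 0}$ but also the Speh representation $\pi_\rho^{\le n_\rho-1}$; for the latter the alternating sum over $(i,k)$ does \emph{not} vanish but gives $(-1)^{n_\rho-1}$ times the same $\rho\otimes\sigma(JL(\rho)^\vee)|-|^{(1-n)/2}$. Since Speh representations do occur as local components of automorphic $\Pi\in\mathcal{A}(I)_\xi$ (e.g.\ the trivial representation), your formula drops genuine terms from $H_\xi$.

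This error then propagates to your coefficient identification: the integer multiplying $\pi_f=\rho\otimes\chi_p\otimes\Pi^p$ is not simply $\#\{\Pi\in\mathcal{A}(I)_\xi:\Pi_{p,0}=JL(\rho),\ldots\}$, but a signed sum over $\Pi$ with $\Pi_{p,0}$ equal to $JL(\rho)$ \emph{or} the Speh representation. Matching this directly to $a(\pi_f)$ via an explicit Jacquet--Langlands multiplicity comparison is delicate (and your sketch does not address the Speh side at all). The paper sidesteps this entirely: once the local computation shows that, for every $\Pi$ contributing, the $W_E$-factor attached to $\pi_f$ is always $\sigma(JL(\rho)^\vee)|-|^{(1-n)/2}=r_{-\mu_0}\circ\varphi_\rho|_{W_E}\cdot|-|^{(1-n)/2}$, one simply invokes Matsushima's formula for $H_\xi$ as a virtual $G(\A_f)$-representation to read off the coefficient as $a(\pi_f)$. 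That is both shorter and avoids the need to control automorphic multiplicities on $I$ explicitly.
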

\begin{proof}
Recall that \[\begin{split}&G(\Q_p)=D^\times\times\Q_p^\times,\\& I(\Q_p)=J_b(\Q_p)=GL_n(E)\times \Q_p^\times,\\&G(\A_f^p)=I(\A_f^p).\end{split}\]
For any irreducible representation $\Pi_f$ of $I(\A_f)$, let $\Pi_p$ (resp. $\Pi^p$) be its $p$-component (resp. component outside $p$). As in the paragraph preceding lemma 3.1, we denote $\Pi_p=\Pi_{p,0}\otimes\chi_p$, with $\Pi_{p,0}$ an irreducible smooth representation of $GL_n(E)$, $\chi_p$ a character of $\Q_p^\times$. Then for any irreducible representation $\rho$ of $D^\times$, the tensor product \[\rho\otimes\Pi^{p,0}\] gives us an irreducible representation $\pi_f$ of $G(\A_f)$, and vice versa.  By lemma 3.1 we have
\[\begin{split}H_\xi&=\sum_{\begin{subarray}{c}i,j\geq 0\\ \Pi\subset \mathcal{A}(I)_\xi\end{subarray}} (-1)^{i+j}\varinjlim_{m}\tr{Ext}^i_{GL_n(E)}(H_c^{j}(\M_{Dr,m}\times \C_p, \ov{\Q}_l(n-1)), \Pi_{p,0})\otimes r_{-\mu_1}\circ\varphi_{\chi_p}|_{W_{E}}\otimes\chi_p\otimes\Pi^p\\
&=\sum_{\Pi\subset\mathcal{A}(I)_\xi}\tr{Mant}_{\mu_0}(\Pi_{p,0})\otimes
r_{-\mu_1}\circ\varphi_{\chi_p}|_{W_{E}}\otimes\chi_p\otimes\Pi^p.\end{split}\]
As the notation in the proof of lemma 3.1, we have to compute
\[\tr{Mant}_{\mu_0}(\Pi_{p,0})=\sum_{i,j\geq0}(-1)^{i+j}\varinjlim_{m}\tr{Ext}^i_{GL_n(E)}(H_c^{j}(\M_{Dr,m}\times \C_p, \ov{\Q}_l(n-1)), \Pi_{p,0}).\]First, we rewrite it as
\[\tr{Mant}_{\mu_0}(\Pi_{p,0})=\sum_{\begin{subarray}{c}i\geq 0\\k\geq 0\\ \end{subarray}} (-1)^{i+n-1+k}\varinjlim_{m}\tr{Ext}^i_{GL_n(E)}(H_c^{n-1+k}(n-1)^{K_{p,0}^m}, \Pi_{p,0}).\]Apply the formula in theorem 3.3 for $H_c^{n-1+k}$ (here over $\ov{\Q}_l$ we can ignore the Tate twist), then take the terms other than $\pi_\rho^{\leq k}$ out of the Ext, and then take the direct limit on $m$ we get that $\tr{Mant}_{\mu_0}(\Pi_{p,0})$ equals
\[\sum_{\rho\in Irr(D^\times)}\sum_{\begin{subarray}{c}i\geq0\\n_\rho-1\geq k\geq0\\ \end{subarray}} (-1)^{i+n-1+k}\tr{Ext}^i_{GL_n(E)}(\pi_\rho^{\leq k}, \Pi_{p,0})\otimes\rho\otimes \sigma'(JL(\rho)^\vee)|-|^{\frac{n/n_\rho-n}{2}-k+\frac{1-n}{2}}.\]
Now by proposition 2.1.17 of \cite{Dat}, for $0\leq k\leq n_\rho-1$, $\tr{Ext}^i_{GL_n(E)}(\pi_\rho^{\leq k}, \Pi_{p,0})\neq 0$ if and only if $\Pi_{p,0}$ is elliptic and has the supercuspidal support $\pi_\rho$ and $i$ satisfies an equality which depends on $k$ and $\Pi_{p,0}$. In this case, $\tr{Ext}^i_{GL_n(E)}(\pi_\rho^{\leq k}, \Pi_{p,0})=\ov{\Q}_l$, and since $\Pi_{p,0}$ is preunitary for any fixed isomorphism $\ov{\Q}_l\simeq\C$, it has to be $\pi_{\rho}^{\leq 0}:=JL(\rho)$ or the local Speh representation $\pi_\rho^{\leq n_\rho-1}$, see loc. cit. 5.2 p. 140. On the other hand, we know by construction \[\sum_{k=0}^{n_\rho-1}\sigma'(JL(\rho)^\vee)|-|^{\frac{n/n_\rho-n}{2}-k}=\sigma(JL(\rho)^\vee).\]Therefore, we can continue the above formula as follows:
\[\begin{split}H_\xi=&\sum_{\rho\in Irr(D^\times)}\sum_{\begin{subarray}{c}\Pi\subset \mathcal{A}(I)_\xi\\\Pi_{p,0}=JL(\rho)\end{subarray}}(-1)^{n-1}\rho\otimes \sigma(JL(\rho)^\vee)|-|^{\frac{1-n}{2}}\otimes r_{-\mu_1}\circ\varphi_{\chi_p}|_{W_{E}}\otimes \chi_p\otimes\Pi^p\\
&+\sum_{\rho\in Irr(D^\times)}\sum_{\begin{subarray}{c}\Pi\subset \mathcal{A}(I)_\xi\\\Pi_{p,0}=\pi_\rho^{\leq n_\rho-1}\end{subarray}}(-1)^{n-1+n_\rho-1}\rho\otimes \sigma(JL(\rho)^\vee)|-|^{\frac{1-n}{2}}\otimes r_{-\mu_1}\circ\varphi_{\chi_p}|_{W_{E}}\otimes\chi_p\otimes\Pi^p\\
=&\sum_{\rho\in Irr(D^\times)}\sum_{\begin{subarray}{c} \Pi\subset \mathcal{A}(I)_\xi\\\Pi_{p,0}=JL(\rho)\,\tr{or}\,\pi_\rho^{\leq n_\rho-1}\end{subarray}}(-1)^{d_\rho}\rho\otimes \sigma(JL(\rho)^\vee)|-|^{\frac{1-n}{2}}\otimes r_{-\mu_1}\circ\varphi_{\chi_p}|_{W_{E}}\otimes\chi_p\otimes\Pi^p,\end{split}\]
where $d_\rho\in\{0,1\}$ depends on $\rho$. For $\rho,\Pi$ occurring in the above formula, let $\pi_f=\rho\otimes\chi_p\otimes\Pi^p$. Then $\pi_p=\rho\otimes\chi_p$, and
\[r_{-\mu}\circ\varphi_{\pi_p}|_{W_E}=r_{-\mu_0}\circ\varphi_{\rho}|_{W_E}\otimes r_{-\mu_1}\circ\varphi_{\chi_p}|_{W_E}.\]

By comparing with Matsushima's formula, the above formula for $H_\xi$ equals
\[\sum_{\pi_f}a(\pi_f)\pi_f\otimes(r_{-\mu}\circ\varphi_{\pi_p}|_{W_E})|-|^{(1-n)/2},\]with the integer $a(\pi_f)$ as in \cite{Ko3} p. 657.
Note that we  have \[a(\pi_f)=\sum_{\rho\in Irr(D^\times)}\sum_{\begin{subarray}{c}\Pi\subset \mathcal{A}(I)_\xi\\ \Pi_{p,0}=JL(\rho)\,\tr{or}\,\pi_\rho^{\leq n_\rho-1}\\ \pi_f=\rho\otimes \chi_p\otimes \Pi^p\end{subarray}}(-1)^{d_\rho}.\]
Here we have used the explicit description of \[\begin{split}r_{-\mu_0}:\, ^L(D^\times_{E})=GL_n(\ov{\Q}_l)\times (\prod_{\begin{subarray}{c}\tau':E\hookrightarrow\ov{\Q}_p\\\tau'\neq\tau\end{subarray}} GL_n(\ov{\Q}_l))\times W_{E}&\longrightarrow GL_n(\ov{\Q}_l)\\ (g,g',\sigma)&\longmapsto (g^{-1})^t.\end{split}\] Hence we have
\[r_{-\mu_0}\circ\varphi_{\rho}|_{W_E}=\sigma(JL(\rho))^\vee=\sigma(JL(\rho)^\vee).\]

\end{proof}
In \cite{Man} Mantovan introduced Igusa varieties for all PEL type Shimura varieties with the related reductive groups unramified at $p$, and established a formula which describes the cohomology of each Newton strata by the cohomology of the associated Rapoport-Zink spaces and Igusa varieties. Here in our setting, the space $\mathcal{A}(I)_\xi$ is a substitute of the cohomology of the Igusa varieties, as one can define in a similar way to \cite{Man}. However, the space $\mathcal{A}(I)_\xi$ should be slightly larger, since one should have a similar formula as theorem 6.7 of \cite{Shin} which says that, the cohomology of the Igusa varieties should be understood from automorphic representations of $G$ by the Jacquet-Langlands correspondence, therefore locally at $p$ the representations are only restricted in the class of all discrete series of $GL_n$.

\section{Deformation spaces of special $O_D$-modules and test functions}
We want to prove the above theorem for all $p$-adically uniformized Shimura varieties introduced in section 2, i.e. $r$ can be arbitrary positive integer. To this end, we will follow an another approach: the Langlands-Kottwitz approach for these Shimura varieties at hand. We will apply Scholze's method to define some test functions by means of deformation spaces of $p$-divisible groups, cf. \cite{Sch1, Sch2, Sch3}. Our local setting is in the EL case as in \cite{RZ}, which is not included in \cite{Sch3} since there one restricts to the general linear groups.

We change the notations in this section. We will not make the full generality as in \cite{RZ} 1.38 and definition 3.18. Here we will restrict ourself to the simple EL case as in loc. cit.. The general case can be studied in the same way, or by working with each simple factors as presented here. In fact, for the purpose of this paper, only a more restricted case will be used later. Let $D$ be a central division algebra of dimension $n^2$ over a finite extension $F$ of $\Q_p$, with invariant $\frac{s}{n}$. Let $V$ be a finite left $D$-module. We fix a maximal order $O_D\subset D$ and a $O_D$-stable lattice $\Lambda\subset V$. These data give use the semisimple $\Q_p$-algebra $C=End_D(V)$ with the maximal order $O_C=End_{O_D}(\Lambda)$. Let $G/\Z_p$ be the algebraic group whose group of $R$-valued points is given by \[G(R)=(R\otimes_{\Z_p}O_C)^\times\]for any $\Z_p$-algebra $R$. Let $\{\mu\}$ be the conjugacy class of the cocharacter \[\mu: \G_m\lra G_{\ov{\Q}_p}.\] The field of definition of $\{\mu\}$ is denoted by $E$. Fix a representative $\mu$ of $\{\mu\}$
over $\ov{\Q}_p$. We assume that only weights 0 and 1 occur in the associated decomposition of $V$ over $\ov{\Q}_p$, i.e. $V_{\ov{\Q}_p}=V_0\oplus V_1$. The isomorphism class of the subspace $V_0$ (and $V_1$) is defined over $E$. We make the following definition of special $O_D$-modules as $p$-divisible groups with suitable actions of $O_D$ (cf. \cite{Sch3} definition 3.3 or \cite{RZ} 3.23).
\begin{definition}
Let $S$ be a scheme over $O_E$ on which $p$ is locally nilpotent. A special $O_D$-module is given by a pair $\ul{H}=(H,\iota)$ consisting of
\begin{itemize}
\item a $p$-divisible group $H$ over $S$,
\item a homomorphism $\iota: O_D\ra End(H)$ such that
\begin{enumerate}
\item locally on $S$ there is an isomorphism of $O_D\otimes O_S$-modules between the Lie algebra of the universal vector extension of $H$ and $O_D\otimes_{\Z_p}O_S$, and
\item the determinant condition holds true, i.e. we have an identity of polynomial functions in $a\in O_D$
\[det_{O_S}(a|LieH)=det_E(a|V_0).\]
\end{enumerate}
\end{itemize}
\end{definition}

With the above definition, many results of sections 3 and 4 in \cite{Sch3} still hold true in our context. We just review and summarize what we need, for details we refer to loc. cit.. Let $\ul{H}=(H,\iota)$ be a special $O_D$ module over a perfect field $\kappa$ of characteristic $p$, which we give a structure of an $O_E$-algebra via a fixed morphism $O_E\ra \kappa$.
\begin{itemize}
\item The deformation functor $Def_{\ul{H}}$ of $\ul{H}$ as a special $O_D$-module is pro-representable by a complete noetherian local $O_E$-algebra $R_{\ul{H}}$ with residue field $\kappa$.
\item Let $k^0$ be the complete discrete valuation ring with residue field $\kappa$ that is unramified over $O_E$, and let $k$ be its fraction field. Then $R_{\ul{H}}$ has a structure as a $k^0$-algebra. Let $X_{\ul{H}}=(SpfR_{\ul{H}})^{rig}$ be the rigid generic fiber of $SpfR_{\ul{H}}$, as a rigid analytic space over $k$. Then for any open compact subgroup $K\subset G(\Z_p)$, we have a finite \'etale covering $X_{\ul{H},K}$ of $X_{\ul{H}}$ parameterizing level $K$ structures.
\item If $X_{\ul{H}}\neq \emptyset$, then $\kappa_G(b)=\mu^{\sharp}$, where $b\in B(G_{\Q_p})$ is the $\sigma$-conjugacy class defined from the Frobenius morphism on the covariant Dieudonn\'e module of $\ul{H}$, $\kappa_G: B(G_{\Q_p})\lra X^\ast(Z(\wh{G})^\Gamma)$ is the Kottwitz map defined in \cite{Ko5}, $\mu^\sharp\in X^\ast(Z(\wh{G})^\Gamma)$ is the element defined from the conjugacy class of cocharacters $\mu$ as in loc. cit.. Here $B(G_{\Q_p})$ is the set of $\sigma$-conjugacy class in $G(W(\ov{\kappa})_\Q)$, $\wh{G}$ is the dual group, $Z(\wh{G})$ its center, and $\Gamma=Gal(\ov{\Q}_p/\Q_p)$.
\item For any perfect field $\kappa$ of characteristic $p$ which is an $O_E$-algebra, there is an association by using Dieudonn\'e module theory $\ul{H}\mapsto \delta\in G(W(\kappa)_\Q)$ which defines an injection from the set of isomorphism classes of special $O_D$-modules over $\kappa$ such that $X_{\ul{H}}\neq\emptyset$ into the set of $G(W(\kappa))$-$\sigma$-conjugacy classes in $G(W(\kappa)_\Q)$ with the properties $pO_D\subset p\delta O_D\subset O_D$ and $\kappa_G(p\delta)=\mu^{\sharp}$.
\item We say $\ul{H}$ has controlled cohomology if $X_{\ul{H},K}$ has controlled cohomology for all normal pro-p open subgroups $K\subset G(\Z_p)$ and all primes $l\neq p$ in the sense of definition 2.2 of \cite{Sch3}. Assume $\ul{H}$ has controlled cohomology. Then for any normal pro-p open subgroup $K\subset G(\Z_p)$, there is an integer $m\geq 1$ such that for all automorphisms $j$ of $\ul{H}$ that act trivially on $\ul{H}[p^m]$ the induced action on $H^i(X_{\ul{H},K}\times\hat{\ov{k}},\Q_l)$ is trivial for all $i$.
\end{itemize}

Let $I_E\subset W_E$ be the inertia subgroup of the Weil group, and fix a geometric Frobenius element $Frob\in W_E$. Fix some integer $j\geq 1$. Let $\tau\in Frob^jI_E\subset W_E$ and $h\in C_c^\infty(G(\Z_p))$ be a function with values in $\Q$. Set $t=j[\kappa_E: \F_p]$ where $\kappa_E$ is the residue field of $E$ (the standard notation for $j[\kappa_E: \F_p]$ should be $r$, but we have used $r$ as the copies of Drinfeld spaces in the $p$-adic uniformization). We regard $\F_{p^t}$ as the degree-$j$-extension of $\kappa_E$. As before, let $k$ be the unramified extension of $E$ with residue field $\F_{p^t}$. Fix the Haar measures on $G(\Q_p)$, resp. $G(\Q_{p^t})$, that give $G(\Z_p)$ resp. $G(\Z_{p^t})$ volume 1.
\begin{definition}Let $\delta\in G(\Q_{p^t})$. If $\delta$ is associated to some special $O_D$-module $\ul{H}$ over $\F_{p^t}$ under the above association, and if $\ul{H}$ has controlled cohomology, define
\[\phi_{\tau,h}(\delta)=tr(\tau\times h| H^\ast(X_{\ul{H},K}\times\hat{\ov{k}},\Q_l))\]for any normal compact pro$-p$ open subgroup $K\subset G(\Z_p)$ such that $h$ is $K$-biinvariant. Otherwise, define $\phi_{\tau,h}(\delta)=0$. Here and in the following \[H^\ast(X_{\ul{H},K}\times\hat{\ov{k}},\Q_l)=\sum_{i\geq0}(-1)^iH^i(X_{\ul{H},K}\times\hat{\ov{k}},\Q_l).\]
\end{definition}
\begin{proposition}\begin{enumerate}
\item The function $\phi_{\tau,h}: G(\Q_{p^t})\ra \Q_l$ is well defined and takes values in $\Q$ independent of $l$. Its support is contained in the compact set of all $\delta\in G(\Q_{p^t})$ satisfying $pO_D\subset p\delta O_D\subset O_D$ and $\kappa_G(p\delta)=\mu^{\sharp}$.
\item The function $\phi_{\tau,h}$ is locally constant, so that it defines an element $\phi_{\tau,h}\in C_c^\infty(G(\Q_{p^t}))$.
\end{enumerate}
\end{proposition}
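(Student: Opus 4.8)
The plan is to follow closely the arguments of \cite{Sch3}, Section~4, transporting them from the case of general linear groups to the simple EL case at hand; all the geometric input needed has been recorded in the bulleted list above, and the remaining steps are either formal or run exactly as in loc.\ cit.. Concretely there are four things to check: that $\phi_{\tau,h}(\delta)$ is independent of the auxiliary choices — the special $O_D$-module $\ul H$ representing $\delta$ and the normal pro-$p$ subgroup $K$ — ; that the value lies in $\Q$ and is independent of $l\neq p$; that the support is contained in the asserted compact set; and that the function is locally constant, whence together with the compact support it lies in $C_c^\infty(G(\Q_{p^t}))$.

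\emph{Well-definedness.} The deformation functor $Def_{\ul H}$ depends on $\ul H$ only through its isomorphism class, so an isomorphism $\ul H\cong\ul H'$ induces a canonical isomorphism $R_{\ul H}\cong R_{\ul H'}$ compatible with the level-$K$ coverings $X_{\ul H,K}$, with the $W_E$-action coming from $\hat{\ov k}$, and with the Hecke action of $G(\Z_p)$; hence the trace $\mathrm{tr}(\tau\times h\mid H^\ast(X_{\ul H,K}\times\hat{\ov k},\Q_l))$ depends only on the isomorphism class of $\ul H$. Since the Dieudonné association $\ul H\mapsto\delta$ is injective on isomorphism classes, a $\delta$ in its image determines $\ul H$ up to isomorphism and the displayed trace is unambiguous. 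Independence of $K$ is the usual averaging argument: for a normal pro-$p$ subgroup $K'\subset K$ the map $X_{\ul H,K'}\to X_{\ul H,K}$ is finite étale Galois with group $K/K'$ and $H^\ast(X_{\ul H,K}\times\hat{\ov k},\Q_l)=H^\ast(X_{\ul H,K'}\times\hat{\ov k},\Q_l)^{K/K'}$; as $h$ is $K$-biinvariant its action on the level-$K'$ cohomology has image in the level-$K$ part and factors through the averaging projector for $K/K'$, and since $\tau$ commutes with the Hecke action the two alternating traces coincide. Any two admissible $K$ have a common admissible refinement, so $\phi_{\tau,h}(\delta)$ is well defined.

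\emph{Rationality, independence of $l$, and support.} This is where the hypothesis that $\ul H$ has controlled cohomology is used: by Definition~2.2 of \cite{Sch3} the groups $H^i(X_{\ul H,K}\times\hat{\ov k},\Q_l)$ are finite-dimensional, the $G(\Z_p)$-action factors through the finite quotient seen by $\ul H[p^m]$ for suitable $m$ (the last bullet), and the $W_E$-action is likewise controlled. One then argues as in \cite{Sch3}: the correspondence $\tau\times h$ is governed by algebraic data over a finite field, so by the standard comparison of $l$-adic realizations its alternating trace lies in $\Q$ and is independent of $l\neq p$. For the support, $\phi_{\tau,h}(\delta)$ vanishes by construction unless $\delta$ is associated to some special $O_D$-module $\ul H$ over $\F_{p^t}$ with $X_{\ul H}\neq\emptyset$, and by the fourth bullet such $\delta$ satisfy $pO_D\subset p\delta O_D\subset O_D$ and $\kappa_G(p\delta)=\mu^\sharp$; the first of these is a boundedness (lattice-chain) condition on $\delta$ and hence cuts out a compact subset of $G(\Q_{p^t})$.

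\emph{Local constancy and the main obstacle.} Fix $\delta_0$ in the support with associated $\ul H_0$, fix an admissible $K$ with $h$ biinvariant under it, and choose $m$ as in the last bullet. For $\delta$ in a sufficiently small neighborhood of $\delta_0$, the covariant Dieudonné module attached to $\delta$, together with its Frobenius and $O_D$-action, agrees modulo $p^m$ with that attached to $\delta_0$; hence the associated $\ul H$ satisfies $\ul H[p^m]\cong\ul H_0[p^m]$ compatibly with all structures, the resulting isomorphism $X_{\ul H,K}\cong X_{\ul H_0,K}$ intertwines the actions of $\tau$ and $h$, and therefore $\phi_{\tau,h}(\delta)=\phi_{\tau,h}(\delta_0)$ on that neighborhood. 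Combined with the compact support this yields $\phi_{\tau,h}\in C_c^\infty(G(\Q_{p^t}))$. I expect the main obstacle to be the rationality and $l$-independence in the third step: this is not formal, relying essentially on the controlled-cohomology hypothesis and on Scholze's comparison argument; the other point requiring care is making precise exactly how much of $\ul H$ and of its Frobenius is determined by $\delta$ modulo a given power of $p$, but this is handled as in \cite{Sch3}.
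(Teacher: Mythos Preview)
Your approach is exactly the paper's: its entire proof reads ``Identical to the proofs of propositions 4.2 and 4.3 of \cite{Sch3}'', and you are sketching those arguments in the present simple EL setting. Your treatment of well-definedness, of rationality and $l$-independence, and of the support is faithful to \cite{Sch3} and to the bulleted list preceding the proposition.

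There is, however, a genuine gap in your sketch of local constancy. From $\ul H[p^m]\cong\ul H_0[p^m]$ you claim ``the resulting isomorphism $X_{\ul H,K}\cong X_{\ul H_0,K}$''. No such isomorphism of rigid spaces follows: the deformation space $X_{\ul H}$ depends on the full $p$-divisible group, and two non-isomorphic special $O_D$-modules with isomorphic $p^m$-torsion need not have isomorphic deformation spaces (let alone canonically so). The bullet you invoke concerns only \emph{automorphisms} of a fixed $\ul H$ that are trivial on $\ul H[p^m]$; it does not by itself compare cohomology for two different $\ul H$'s. In \cite{Sch3} Proposition~4.3 the passage from ``automorphisms trivial on $\ul H[p^m]$ act trivially on cohomology'' to ``the trace depends only on $\delta$ modulo a congruence subgroup'' is made by interpolating the various $\ul H$ in a family (via the ambient Rapoport-Zink space, whose formal completion at each $\F_{p^t}$-point recovers the corresponding $X_{\ul H}$) and using constructibility of the relevant cohomology sheaves together with the automorphism statement. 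Your closing caveat that ``this is handled as in \cite{Sch3}'' is correct, but the specific mechanism you wrote down --- a direct isomorphism of level spaces --- is not the one that works, and you should replace it by the family argument.
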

\begin{proof}
Identical to the proofs of propositions 4.2 and 4.3 of \cite{Sch3}.
\end{proof}

We come back to the global situation as in section 2. Recall we have local central division algebras $D_i=B_{\vp_i}$ over $F_{\vp_i}$ with invariant $\frac{1}{n}$ for $1\leq i\leq r$ and the semisimple $\Q_p$-algebra $D'$ which corresponds to the factors of $B$ at the primes $\vp_{r+1},\dots,\vp_s$. For each $1\leq i\leq r$, the cocharacter $\mu_i$ has the form that \[\begin{split} \G_m&\lra G_{i\ov{\Q}_p}\\
z&\mapsto (\left(\begin{array}{cc}
 1 & \\
  &z1_{n-1}\\
  \end{array}\right),1_n,\cdots,1_n),\end{split}\]where $G_i=Res_{F_{\vp_i}|\Q_p}D_i^\times$ and for any integer $d\geq1$, $1_d$ is the identity $d\times d$ matrix. Recall now $E$ is the compositum of the fields $F_{\vp_i}$ for $1\leq i\leq r$. We consider the following $p$-divisible groups.
\begin{definition}
Let $S$ be a scheme over $O_E$ on which $p$ is locally nilpotent. A $(D_1,\dots,D_r,D')$-group over $S$ is a $p$-divisible group $\wt{\ul{H}}=((H_1,\iota_1),\dots,(H_r,\iota_r),(H',\iota'))$ where
\begin{itemize}
\item for each $1\leq i\leq r$, $(H_i,\iota_i)$ is a special $O_{D_i}$-module over $S$,
\item $(H',\iota')$ is a $D'$ group over $S$ in the sense of definition 4.1 of \cite{Sch2}, i.e. an \'etale $p$-divisible group $H'$ over $S$ together with an action $\iota': O_{D'}^{op}\ra End(H')$ such that $H'[p]$ is free of rank 1 over $O_{D'}^{op}/p$.
\end{itemize}
\end{definition}
Recall that there are two ways to parameterize $D'$-groups $H'$ over $\F_{p^t}$, the Dieudonn\'e parametrization and the Galois parametrization. Let $\sigma_0$ be the absolute Frobenius of $\Z_{p^t}$. The set of isomorphism classes of such $D'$-groups is in bijection with the set of $(O_{D'}\otimes_{\Z_p}\Z_{p^t})^\times$-$\sigma_0$ conjugacy classes in $(O_{D'}\otimes_{\Z_p}\Z_{p^t})^\times$, which is in turn bijection with the set of $O_{D'}^\times$-conjugacy classes in $O_{D'}^\times$ by a map $\delta'\mapsto N\delta'$, cf. proposition 4.2 of \cite{Sch2}. Let $h'\in C_c^\infty(O_{D'}^\times)$ be a function which takes values in $\Q$ and invariant under conjugation. We define a function $\phi_{h'}\in C_c^\infty((O_{D'}\otimes_{\Z_p}\Z_{p^t})^\times)$ by setting $\phi_{h'}(\delta')=h'(N\delta')$. Then by proposition 4.3 of loc. cit. the functions $\phi_{h'}$ and $h'$ have matching (twisted) orbital integrals.

Let $\wt{\ul{H}}$ be a $(D_1,\dots,D_r, D')$-group over $\F_{p^t}$, with $t=j[\kappa_E: \F_p]$. Let $G_B$ be the group scheme over $\Z_p$ given by
\[G_B(R)=(O_B\otimes_{\Z_p}R)^\times\] for any $\Z_p$-algebra $R$. Then $G_B\times\G_m$ is a integral model of the $G_{\Q_p}$ introduced in section 2. As in the single factor case, the group $\wt{\ul{H}}$ gives rise to an element
\[\delta=(\delta_1,\dots,\delta_r,\delta')\in G_B(\Q_{p^t})=(D_1\otimes_{\Q_p}\Q_{p^t})^\times\times\cdots\times (D_r\otimes_{\Q_p}\Q_{p^t})^\times\times G_{D'}(\Q_{p^t})\]which is well defined up to $G_B(\Z_{p^t})$-$\sigma_0$-conjugacy. The deformation functor of $\wt{\ul{H}}$ as $(D_1,\dots,D_r,D')$-group is prorepresentable by a $k^0$-algebra $R_{\wt{\ul{H}}}$, where as before $k^0$ is the unramified extension of $O_E$ with residue field $\F_{p^t}$ and fraction field $k$. We assume \[X_{\wt{\ul{H}}}=(SpfR_{\wt{\ul{H}}})^{rig}\]
nonempty, which imposes the condition $\kappa_{G_B}(p\delta)=\mu^\sharp$ on $\delta$.
Moreover, for an open compact subgroup of the form $K=(\prod_{i=1}^rK_i)\times K'\subset G_B(\Z_p)$, we have a product decomposition of the level $K$ covering space
\[X_{\wt{\ul{H}},K}=(\prod_{i=1}^rX_{\ul{H}_i,K_i})\times G_{D'}/K'\]of $X_{\wt{\ul{H}}}$.
Now for $\tau\in Frob^jI_E\subset W_E, \, h_1\in C_c^\infty(O_{D_1}^\times),\dots,h_r\in C_c^\infty(O_{D_r}^\times),h'\in C_c^\infty(O_{D'}^\times)$, with all these functions take values in $\Q$ and $h'$ invariant under conjugation, we have defined the functions $\phi_{\tau, h_1}\in C_c^\infty(D_1^\times(\Q_{p^t})), \dots, \phi_{\tau,h_r}\in  C_c^\infty(D_r^\times(\Q_{p^t}))$ and $\phi_{h'}\in C_c^\infty((O_{D'}\otimes_{\Z_p}\Z_{p^t})^\times)$. We can also define a function $\phi_{\tau,h_1,\dots,h_r,h'}\in C_c^\infty(G_B(\Q_{p^t}))$ by
\[\phi_{\tau,h_1,\dots,h_r,h'}(\delta)=tr(\tau\times h_1\times\cdots\times h_r\times h'|H^\ast(X_{\wt{\ul{H}},K}\times\hat{\ov{k}},\Q_l))\]for any normal compact pro-$p$ open subgroup $K\subset G_B(\Z_p)$ such that $(h_1,\dots,h_r,h')$ is $K$-biinvariant, if $\delta$ is associated to a controlled $(D_1,\dots,D_r,D')$-group $\wt{\ul{H}}$, otherwise define $\phi_{\tau,h_1,\dots,h_r,h'}(\delta)=0$. Then it is easy to see that for $\delta=(\delta_1,\cdots,\delta_r,\delta')$ we have
\[\phi_{\tau,h_1,\dots,h_r,h'}(\delta)=\phi_{\tau,h_1}(\delta_1)\cdots\phi_{\tau,h_r}(\delta_r)\phi_{h'}(\delta').\]

Our global PEL situation actually puts us in the quasi-EL case in the following sense, which is the analogue of the quasi-EL case of \cite{Sch3} section 3 in our context. Let $S$ be a locally noetherian scheme on which $p$ is locally nilpotent. Recall for an abelian scheme $A$ which comes from an $S$-valued point of the moduli space $S_{K^p}$ defined in section 2, the associated $p$-divisible group has the decomposition
\[A[p^\infty]=\wt{\ul{H}}\oplus\wt{\ul{H}}^D,\]
with $\wt{\ul{H}}$ a $(D_1,\dots,D_r,D')$-group, $\wt{\ul{H}}^D$ its dual. Then one can also formulate a definition of $p$-divisible groups over $S$ with the same additional (PEL) structures as those in the form $A[p^\infty]$ for $A$ coming from an $S$-point of $S_{K^p}$, cf. \cite{Sch3} definition 3.3. We may call these additional structures as quasi-EL additional structures. Such a $p$-divisible group with quasi-EL additional structures is given by a quadruple $\ul{H}=(H,\iota,\lambda,\mathbb{L})$ over $S$, with $\lambda: H\st{\sim}{\lra}H^D\otimes\mathbb{L}$ a (twisted) principal polarization and $\mathbb{L}$ is a one-dimensional smooth $\Z_p$-local system. It is equivalent to giving a $(D_1,\dots,D_r,D')$-group $\wt{\ul{H}}_0$ and an one-dimensional smooth $\Z_p$-local system $\mathbb{L}$ over $S$. The relation between the two is given by the equality $H=H_0\oplus H_0^D\otimes\mathbb{L}$. In particular, for such a $p$-divisible group $\ul{H}$ over $\F_{p^t}$, we get an element
\[\delta=(\delta_0,p^{-1}d)\in G(\Q_{p^t})=G_B(\Q_{p^t})\times\Q_{p^t}^\times\]where $\mathbb{L}$ corresponds to $d$ as in section 3 of \cite{Sch3}. Then for any open compact subgroup of the form
\[K=K_0\times (1+p^m\Z_p)\subset G(\Z_p)=G_B(\Z_p)\times\Z_p^\times\] with $m\geq1$, we have a product decomposition \[X_{\ul{H},K}=X_{\wt{\ul{H}}_0,K_0}\times X_{\mathbb{L},m},\]where $X_{\mathbb{L},m}$ parameterizes isomorphisms between $\mathbb{L}\otimes\mu_{p^m}$ and $\Z/p^m\Z$. Similarly for $\tau\in Frob^jI_E\subset W_E, (h_0, h_{\G_m})\in C_c^\infty(G_B(\Z_p))\times C_c^\infty(\Z_p^\times)$ with values in $\Q$, we can define a test function $\phi_{\tau,h_0,h_{\G_m}}\in C_c^\infty(G(\Q_{p^t}))$ such that for all $\delta=(\delta_0,\delta_{\G_m})\in G(\Q_{p^t})=G_B(\Q_{p^t})\times\Q_{p^t}^\times$, we have
\[\phi_{\tau,h_0,h_{\G_m}}(\delta)=\phi_{\tau,h_0}(\delta_0)\phi_{\tau,h_{\G_m}}(\delta_{\G_m}),\]
where $\phi_{\tau,h_{\G_m}}$ is the function with support on $p^{-1}\Z_{p^t}^\times$ defined by
\[\phi_{\tau,h_{\G_m}}(\delta_{\G_m})=h(Art_{\Q_p}(\tau)N\delta_{\G_m}),\]where $Art_{\Q_p}: W_{\Q_p}\ra \Q_p^\times$ is the local reciprocity map sending a geometric Frobenius element to a uniformizer. As in the remark 4.11 of \cite{Sch3}, for all characters $\chi: \Q_p^\times\ra\C^\times$, we have the identity
\[tr(\phi_{\tau,h_{\G_m}}|\chi\circ N_{\Q_{p^t}|\Q_p})=tr(\tau^{-1}|\chi\circ Art_{\Q_p})tr(h|\chi),\]where $N_{\Q_{p^t}|\Q_p}:\Q_{p^t}^\times\ra \Q_p^\times$ is the norm map.

\section{Counting points modulo $p$}
We return to the situation of section 2. We are going to adapt Kottwitz's method to count points in $S_{K^p}(\F_{p^t})$ and compute the cohomology of Shimura varieties. Before going into the details, we say more about the moduli spaces $S_{K^p}$.

We know that \[S_{K^p}\times_{O_E}E\simeq \coprod_{ker^1(\Q,G)}Sh_{K_p^0K^p},\]where $K_p^0\subset G(\Q_p)$ is the maximal open compact subgroup. Now for any open compact subgroup $K_p\subset G(\Z_p)$, we have a finite \'etale cover $\pi_{K_pK^p}: S_{K_pK^p}\ra S_{K^p}\times_{O_E}E$ which parameterizes $K_p$-orbits of isomorphisms between $O_B\otimes\Z_p$ and the $p$-adic Tate module $T_pA$ of $A$, compatible with the additional structures. In such a way we get a tower of varieties equipped with an action of $G(\Z_p)\times G(\A_f^p)$ by Hecke correspondences. Moreover, we have isomorphisms
\[S_{K_pK^p}\simeq \coprod _{ker^1(\Q,G)}Sh_{K_pK^p}\]compatible with the Hecke correspondences and the maps to $S_{K^p}\times_{O_E} E$, cf. \cite{Sch3} proposition 5.3.

Fix a prime $l\neq p$. Let $\xi$ be a finite dimensional algebraic representation of $G$ over $\ov{\Q}_l$. Then as usual we get local systems $\mathcal{L}_{\xi}$ on $S_{K_pK^p}$ to which the action of the Hecke correspondences extend. By abuse of notation we will not put level subscripts on them. We can define the $\ell$-adic cohomology of the tower $S_{K_pK^p}$ with coefficients in the local systems $\mathcal{L}_\xi$ by
\[\wt{H}_\xi=\sum_{i\geq 0}(-1)^i\varinjlim_{K_p,K^p}H^i(S_{K_pK^p}\times \ov{\Q}_p,\mathcal{L}_\xi).\]
This is a virtual representation of $G(\Z_p)\times G(\A_f^p)\times W_E$. We have the equality
\[\wt{H}_\xi=\bigoplus_{ker^1(\Q,G)}H_\xi\]
which is compatible with the actions of $W_E$ and $G(\Z_p)\times G(\A_f^p)$, where $H_\xi$ was defined in section 3.

Now let $x\in S_{K^p}(\F_{p^t})$, write $\ul{H}$ as the associated $p$-divisible group with quasi-EL additional structures over $\F_{p^t}$ defined in section 4. Let $k$ be the complete unramified extension of $E$ with residue field $\F_{p^t}$. By Serre-Tate theorem, we know that there is an isomorphism of complete local rings $\wh{O}_{S_{K^p},x}\simeq R_{\ul{H}}$, where $R_{\ul{H}}$ is the deformation ring of $\ul{H}$. In particular, we have the natural embedding $X_{\ul{H}}\hookrightarrow S_{K_p^0K^p}^{rig}\times k$. Moreover for any open compact subgroup $K_p\subset G(\Z_p)$ we have the pullback diagram
\[\xymatrix{
X_{\ul{H},K_p}\ar@{^{(}->}[d]\ar[r]&X_{\ul{H}}\ar@{^{(}->}[d]\\
S_{K_pK^p}^{rig}\times k\ar[r]&S_{K_p^0K^p}^{rig}\times k.}\]Here for any $K_p\subset G(\Z_p)$, $S^{rig}_{K_pK^p}$ is the rigid analytic space over $E$ associated to $S_{K_pK^p}$. In particular, $\ul{H}$ has controlled cohomology and for all $i\in\Z$ we have a $Gal(\ov{k}/k)$-equivariant isomorphism
\[(R^i\psi\pi_{K_pK^p\ast}\Q_l)_{\ov{x}}\simeq H^i(X_{\ul{H},K_p}\times \hat{\ov{k}}, \Q_l),\]
where $\pi_{K_pK^p}: S_{K_pK^p}\ra S_{K_p^0K^p}$ is the natural projection, and $\psi$ is the nearby cycle functor for the scheme $S_{K^p}$.

The $p$-adic uniformization gives us the description of the set of $\ov{\F}_p$-points of $S_{K^p}$:
\[S_{K^p}(\ov{\F}_p)=\coprod_{ker^1(\Q,G)}I(\Q)\setminus X_p \times X^p,\]
where $X_p=\M(\ov{\F}_p)$ with $\M$ the pro-formal scheme over $O_E$ associated to the formal Rapoport-Zink space $\wh{\M}$ as in definition 3.51 of \cite{RZ}, $X^p=G(\A_f^p)/K^p$, and the Frobenius action on the left hand side induces the natural Frobenius action on $X_p$. The set $X_p$ can be described as a set of some suitable Dieudonn\'e lattices, and after fixing a choice of a lattice we can identify $X_p$ with a subset of $G(L)/G(O_L)$ where $L=W(\ov{\F}_p)_\Q$. As $K^p$ varies, this bijection is compatible with the action of $G(\A_f^p)$. Note for our group $G$, if $K^p$ sufficiently small we have $Z(\Q)\cap K=1$, where $Z\subset G$ is the center and $K=K_p^0K^p$. From now on, we take a sufficiently small $K^p$ such that $Z(\Q)\cap K=1$, and such that 1.3.7 and 1.3.8 of \cite{Ko2} hold (the proof of lemma 5.5 in \cite{Mi} works for the group $I$, as one sees easily). Passing to the finite level, by the method of \cite{Ko2} we have
\[S_{K^p}(\F_{p^t})=\coprod_{ker^1(\Q,G)}\coprod_{\varepsilon}I_\varepsilon(\Q)\setminus A_\varepsilon,\]
where $\varepsilon$ runs through the set of conjugacy classes in $I(\Q)$, $I_\varepsilon$ is the centralizer of $\varepsilon$ in $I$, and $A_\varepsilon$ is defined as follows:
\[\begin{split}A_\varepsilon&=\tr{Fix}(Fr^j\varepsilon^{-1}|X_p)\times \tr{Fix}(\varepsilon|X^p)\\&=:X_p(\varepsilon)\times X^p(\varepsilon).\end{split}\]
Here $Fr$ is the Frobenius automorphism on $X_p$ over $E$. The set $A_\varepsilon$ could be empty. However, when it is non empty,
the set $\coprod_{ker^1(\Q,G)}I_\varepsilon(\Q)\setminus A_\varepsilon$ has a moduli explanation as follows. Let $x_0=(A_{x_0},\iota_{x_0},\lambda_{x_0},\ov{\eta}_{x_0}^p)$ $\in S_K(\F_{p^t})$ be any point with associated $\varepsilon$. Then the set of points in $S_K^p(\F_{p^t})$ for which the associated ableian varieties are isogenious to $(A_{x_0},\iota_{x_0},\lambda_{x_0})$ over $\F_{p^t}$ is in bijection with $\coprod_{ker^1(\Q,G)}I_\varepsilon(\Q)\setminus A_\varepsilon$. As in the case of $X_p$, after fixing some choice of some lattice, the set $X_p(\varepsilon)$ can be identified with a subset of $G(\Q_{p^t})/G(\Z_{p^t})$.

For $\varepsilon\in I(\Q)$ such that $A_\varepsilon$ is non empty, we can associate to it a conjugacy class $\gamma\in G(\A_f^p)$ that is stably conjugate to $\varepsilon$ by the embedding $I(\Q)\subset I(\A_f^p)=G(\A_f^p)$, and a $\sigma$-conjugacy class $\delta\in G(\Q_{p^t})$ such that $N\delta$ is stably conjugate to $\varepsilon$, cf. \cite{Ra} theorem 4.11 and \cite{Ra2} p. 689. The characterization of the $\sigma$-conjugacy class $\delta$ is as follows. Assume that $A_\varepsilon$ is non empty, then there is a $\delta\in G(\Q_{p^t})$ such that there exists
$c\in G(L)$ with $\delta= cb\sigma(c)^{-1}$ and $N\delta = c\varepsilon c^{-1}$; here $N\delta$ is as in definition \ref{D:Kottwitz} and $b\times \sigma$ is the $\sigma$-linear map $V^{-1}$ on the covariant rational Dieudonn\'e module. The $\sigma$-conjugacy class of such a $\delta\in G(\Q_{p^t})$  is
uniquely determined by the $I(\Q)$-conjugacy class of $\varepsilon$. If $A_\varepsilon$ is non empty, we can also get $(\gamma,\delta)$ by geometric means. A point $x\in S_{K}(\F_{p^t})$ with associated $\varepsilon$ gives rise to a $c$-polarized virtual abelian variety $(A_x,\iota_x,\lambda_x)$ over $\F_{p^t}$, cf. \cite{Sch3} definition 6.1 or section 10 of \cite{Ko2}. By the existence of a level structure of type $K^p$, we know that for all $l\neq p$, the rational $l$-adic Tate module $V_l(A_x)$ is isomorphic to $V\otimes\Q_l$; fixing an isomorphism, the Frobenius morphism $\pi_{A_x}$ gives rise to a $B$-linear automorphism of $V_l$; we define $\gamma_l\in G(\Q_l)$ as its inverse. Its conjugacy class is well defined, and these elements define a conjugacy class $\gamma\in G(\A_f^p)$. The associated $p$-divisible group $H=A_x[p^\infty]$ then gives an element $\delta\in G(\Q_{p^t})$ well defined up to $\sigma$-conjugation by $G(\Z_{p^t})$. It satisfies the equality $\kappa_{G}(p\delta)=\mu^\sharp$.

Now we get the first form the trace formula. Fix the Haar measures on $G(\Q_p)$, resp. $G(\Q_{p^t})$, that give $G(\Z_p)$, resp. $G(\Z_{p^t})$, volume 1.
\begin{proposition}With the notations above, we have the following equality:
\[tr(\tau\times hf^p|H_\xi)=\sum_{\varepsilon}\tr{vol}(I_\varepsilon(\Q)\setminus I_\varepsilon(\A_f))
O_\gamma(f^p)TO_{\delta\sigma}(\phi_{\tau,h})tr\xi(\varepsilon),\]where $\varepsilon$ runs over the set of conjugacy classes of $I(\Q)$ such that $A_\varepsilon$ is non empty.
\end{proposition}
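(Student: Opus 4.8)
The strategy is the standard Langlands–Kottwitz argument adapted to the $p$-adically uniformized setting, following closely \cite{Ko2} section 16 and \cite{Sch3} section 6. The left hand side $tr(\tau\times hf^p|H_\xi)$ will be computed by the Lefschetz trace formula applied to the Frobenius-twisted Hecke correspondence on $S_{K^p}$, and then reorganized via the decomposition of $S_{K^p}(\F_{p^t})$ into isogeny classes proved in section 5. First I would fix a sufficiently small $K^p$ with $Z(\Q)\cap K=1$, pick a compact open $K_p\subset G(\Z_p)$ fine enough that $h$ is $K_p$-biinvariant, and express $tr(\tau\times hf^p|\wt H_\xi)$ as a sum of local terms over fixed points of the correspondence $(\tau, h f^p)$ acting on $H^\ast(S_{K_pK^p}\times\ov\Q_p,\L_\xi)$; using the relation $\wt H_\xi=\bigoplus_{ker^1(\Q,G)}H_\xi$ this reduces to the identity for $\wt H_\xi$. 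Since $S_{K_pK^p}$ has bad reduction, one passes to the nearby cycles: by the isomorphism $(R^i\psi\pi_{K_pK^p\ast}\Q_l)_{\ov x}\simeq H^i(X_{\ul H,K_p}\times\hat{\ov k},\Q_l)$ recalled in section 5, the local term at a point $x\in S_{K^p}(\F_{p^t})$ contributing to $\tau$ is precisely the trace of $\tau\times h$ on the cohomology of the deformation space covering $X_{\ul H, K_p}$, twisted by $\xi(\varepsilon)$ coming from the coefficient system $\L_\xi$.

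The next step is to sum these local contributions according to the stratification $S_{K^p}(\F_{p^t})=\coprod_{ker^1(\Q,G)}\coprod_\varepsilon I_\varepsilon(\Q)\setminus A_\varepsilon$ with $A_\varepsilon=X_p(\varepsilon)\times X^p(\varepsilon)$. Grouping the fixed points lying in a single isogeny class $\varepsilon$ and using that the automorphisms of $\ul H$ acting trivially on enough torsion act trivially on cohomology (the controlled-cohomology property listed in section 4), one rewrites the sum over $x\in I_\varepsilon(\Q)\setminus A_\varepsilon$ as a product of a volume factor $\mathrm{vol}(I_\varepsilon(\Q)\setminus I_\varepsilon(\A_f))$, an orbital integral $O_\gamma(f^p)$ over the prime-to-$p$ part $X^p(\varepsilon)$ (this is the familiar bookkeeping of \cite{Ko2} for the Hecke action away from $p$), a twisted orbital integral over $X_p(\varepsilon)$, and the factor $tr\,\xi(\varepsilon)$. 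The key point is that by the very definition of $\phi_{\tau,h}$ in Definition 4.3, namely $\phi_{\tau,h}(\delta)=tr(\tau\times h|H^\ast(X_{\ul H,K}\times\hat{\ov k},\Q_l))$ for $\delta$ associated to $\ul H$ and zero otherwise, the sum over $X_p(\varepsilon)$ — identified with a subset of $G(\Q_{p^t})/G(\Z_{p^t})$ and matching the $\sigma$-conjugates of $\delta$ satisfying $\kappa_G(p\delta)=\mu^\sharp$ — is exactly the twisted orbital integral $TO_{\delta\sigma}(\phi_{\tau,h})$. This is where the definition of the test function is engineered to make the identity hold.

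I would assemble these pieces into the displayed formula, being careful about the normalization of Haar measures (those giving $G(\Z_p)$ and $G(\Z_{p^t})$ volume $1$, as fixed just before the proposition) so that the local volume factors combine correctly with the orbital integrals, and about the fact that $\gamma$ is only a conjugacy class in $G(\A_f^p)$ stably conjugate to $\varepsilon$ while $\delta$ is a $\sigma$-conjugacy class in $G(\Q_{p^t})$ with $N\delta$ stably conjugate to $\varepsilon$, as recalled via \cite{Ra} Theorem 4.11. The argument is essentially identical to the proof of the corresponding counting result in \cite{Sch3} section 6; the one extra subtlety specific to our situation is that $G_{\Q_p}$ is not quasi-split, so a priori there may be isogeny classes $\varepsilon$ whose associated $\delta$ has $N\delta$ not conjugate to an element of $G(\Q_p)$. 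The main obstacle — which at this stage of the paper is simply left as the obstruction motivating sections 5 and 6 — is the vanishing $TO_{\delta\sigma}(\phi_{\tau,h})=0$ in precisely those non-Kottwitz-triple cases; here, though, the proposition is stated as a sum over all conjugacy classes $\varepsilon$ in $I(\Q)$, so no vanishing input is needed and the proof is a direct Lefschetz/stratification computation. In short: expand the trace via Lefschetz and nearby cycles, stratify by isogeny class, and recognize the $p$-part of each stratum's contribution as the twisted orbital integral of $\phi_{\tau,h}$ by its defining formula.
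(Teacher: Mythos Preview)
Your proposal is correct and follows essentially the same approach as the paper: apply the Lefschetz trace formula (the paper cites Varshavsky \cite{Va3}) together with the nearby-cycle identification $(R^i\psi\pi_{K_pK^p\ast}\Q_l)_{\ov x}\simeq H^i(X_{\ul H,K_p}\times\hat{\ov k},\Q_l)$, stratify the fixed-point set by isogeny classes $\varepsilon$ via the uniformization description of $S_{K^p}(\F_{p^t})$, and recognize the $p$-part of each stratum's contribution as $TO_{\delta\sigma}(\phi_{\tau,h})$ directly from Definition 4.2. Your observation that no vanishing input is needed here (the sum is over all $\varepsilon\in I(\Q)$, not over Kottwitz triples) is exactly right and matches the paper's logical structure.
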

\begin{proof}
This formula comes from the application of Lefschetz trace formula \cite{Va3} and the description of the set of $\F_{p^t}$-points. The local term of each fixed point in the isogeny class associated to $\varepsilon$ is $\phi_{\tau,h}(\delta)tr\xi(\varepsilon)$. The contribution of fixed points in this isogeny class is given by
\[\tr{vol}(I_\varepsilon(\Q)\setminus I_\varepsilon(\A_f))
O_\gamma(f^p)TO_{\delta\sigma}(\phi_{\tau,h})tr\xi(\varepsilon).\]
For a more detailed proof, see \cite{Sh} proposition 5.1.
\end{proof}

We want to construct a ``Kottwitz triple'' attached to the above $\varepsilon$. Recall the usual definition of a Kottwitz triple for our reductive group $G$ over $\Q$.
\begin{definition}\label{D:Kottwitz}
Let $j\geq 1$. Set $t=j[\kappa_E:\F_p]$ with $\kappa_E$ as the residue field of $E$. A degree-$j$-Kottwitz triple $(\gamma_0;\gamma,\delta)$ consists of
\begin{itemize}
\item a semisimple stable conjugacy class $\gamma_0\in G(\Q)$,
\item a conjugacy class $\gamma\in G(\A_f^p)$ that is stably conjugate to $\gamma_0$,
\item a $\sigma$-conjugacy class $\delta\in G(\Q_{p^t})$ such that $N\delta:=\delta\sigma(\delta)\cdots\sigma^{t-1}(\delta)$ is stably conjugate to $\gamma_0$\end{itemize}
satisfying
\begin{enumerate}
\item $\gamma_0$ is elliptic in $G(\R)$
\item $\kappa_{G_{\Q_p}}(p\delta)=\mu^{\sharp}$ in $X^\ast(Z(\wh{G})^{\Gamma})$, where $\Gamma$ is the absolute Galois group of $\Q_p$.
\end{enumerate}

\end{definition}
The conjugacy class of $N\delta$ is stable under the Galois group $\Gamma$. However, since $G_{\Q_p}$ is not quasi-split, it can happen that this conjugacy class contains no element of $G(\Q_p)$, cf. \cite{Ko6}. This is an obstruction to obtain a $\gamma_0\in G(\Q)$ such that $(\gamma_0;\gamma,\delta)$ forms a Kottwitz triple for $G$. In fact, it is the only obstruction, since we have the following lemma.
\begin{lemma}If the conjugacy class of $N\delta$ contains an element of $G(\Q_p)$, then we can find an element $\gamma_0\in G(\Q)$ such that $(\gamma_0;\gamma, \delta)$ forms a Kottwitz triple for $G$.
\end{lemma}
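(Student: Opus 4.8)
The plan is to reconstruct the missing data $\gamma_0 \in G(\mathbb{Q})$ out of the local pieces $\gamma \in G(\mathbb{A}_f^p)$, $\delta \in G(\mathbb{Q}_{p^t})$, and the archimedean information coming from the fact that $\varepsilon \in I(\mathbb{Q})$ is elliptic in $I(\mathbb{R})$ (because $I$ is anisotropic modulo center, being an inner form of $G$ with $G_{\mathbb{R}}$ compact modulo center in the relevant factors). First I would recall the general mechanism, as in \cite{Ko2} section 14 and \cite{Ra} section 4: one starts from the element $\varepsilon \in I(\mathbb{Q})$ attached to the point, which by construction is stably conjugate to $\gamma$ in $G(\mathbb{A}_f^p)$ and whose norm matches $N\delta$ at $p$; the issue is purely that $\varepsilon$ lives in the inner form $I$ rather than in $G$ itself, so one must transfer its stable conjugacy class from $I$ to $G$.

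The key steps, in order, would be the following. First, observe that $I$ and $G$ are inner forms of each other, differing by a class in $H^1(\mathbb{Q}, G^{\mathrm{ad}})$ that is locally trivial away from $p$ and $\infty$ (since $I(\mathbb{A}_f^p) = G(\mathbb{A}_f^p)$) and at $p$ corresponds to the element $b \in B(G_{\mathbb{Q}_p})$; the cocharacter condition $\kappa_G(p\delta) = \mu^\sharp$ guarantees compatibility of the $p$-component with the global Shimura datum. Second, the hypothesis that the conjugacy class of $N\delta$ meets $G(\mathbb{Q}_p)$ is exactly what is needed so that the stable conjugacy class of $\varepsilon$, viewed through the inner twist, descends to a stable conjugacy class defined over $\mathbb{Q}_p$ that is represented in $G(\mathbb{Q}_p)$. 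Third, one combines: at $p$ a representative in $G(\mathbb{Q}_p)$ (obtained from $N\delta$); away from $p$ and $\infty$ the element $\gamma \in G(\mathbb{A}_f^p)$; at $\infty$ an elliptic element coming from the transfer (this is where ellipticity of $\varepsilon$ in $I(\mathbb{R})$ enters, to secure condition (1) of Definition 5.3). One then invokes a Hasse principle / Galois cohomology argument (the vanishing of the relevant $\ker^1$ or the injectivity statements used in \cite{Ko2} 14.6, together with the $\ker^1(\mathbb{Q}, G)$ bookkeeping already present in the point count) to conclude that these compatible local stable classes are the localizations of a single stable conjugacy class $\gamma_0 \in G(\mathbb{Q})$. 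Finally, one checks that $(\gamma_0; \gamma, \delta)$ satisfies the two numbered conditions of Definition 5.3: condition (2) is immediate from $\kappa_G(p\delta) = \mu^\sharp$, and condition (1) follows since $\gamma_0$ was built to be stably conjugate at $\infty$ to the elliptic element $\varepsilon \in I(\mathbb{R})$.

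I expect the main obstacle to be the Galois-cohomological descent in the third and fourth steps: producing $\gamma_0$ over $\mathbb{Q}$ from its prescribed local behavior requires controlling the obstruction in $\ker^1$ or $\ker^2$ groups attached to the centralizer torus of the putative $\gamma_0$, and one must verify that the local classes one has assembled (the $p$-adic one, now available precisely by the hypothesis on $N\delta$; the prime-to-$p$ one from $\gamma$; and the archimedean elliptic one) actually glue. Concretely this amounts to checking that the image of the assembled adelic class in the appropriate $H^1$ or $H^2$ group with coefficients in the center (or in $I_\varepsilon^{\mathrm{sc}}$) vanishes, which is where one uses that away from $p, \infty$ there is no discrepancy between $I$ and $G$, and at $\infty$ the relevant cohomology is handled by ellipticity as in \cite{Ko2} lemma 14.6.3. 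Once the gluing is in place the remaining verifications are routine, so I would spend the bulk of the argument on this descent, essentially mirroring the proof of \cite{Ko2} section 14 adapted to the inner form $I$ and citing \cite{Ra} theorem 4.11 for the construction of the pair $(\gamma, \delta)$ attached to $\varepsilon$.
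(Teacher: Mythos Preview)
Your outline is plausible and would likely go through in this setting, but it takes a different route from the paper's proof. The paper does not transfer the stable class of $\varepsilon$ from $I$ to $G$ by an abstract Galois-cohomological gluing. Instead it works directly with the geometry of the point: take the $c$-polarized virtual abelian variety $(A,\iota,\lambda)$ over $\F_{p^t}$ in the isogeny class of $\varepsilon$, and look at its Frobenius $\pi_A \in \mathrm{End}_B(A)$. The field $F(\pi_A)$ is a CM field, the center of the division algebra $\mathrm{End}_B(A)$, and is preserved by the Rosati involution. The hypothesis that the conjugacy class of $N\delta$ meets $G(\Q_p)$ is exactly what guarantees that $F(\pi_A)$ embeds into $B^{\mathrm{op}}$ (compatibly with the involutions $\ast$ and $\sharp$); under such an embedding $\pi_A$ lands in $G(\Q)$, and one simply sets $\gamma_0 = \pi_A^{-1}$. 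The verification of the Kottwitz-triple axioms is then deferred to \cite{Ko2} \S14.

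So the comparison is: the paper's argument is a concrete Honda--Tate / embedding-of-CM-fields argument, exploiting the PEL structure and the classical local--global principle for embeddings of fields into central simple algebras; your argument is the abstract group-theoretic shadow of this, phrased as a Hasse principle for stable conjugacy classes between inner forms. Your approach is more portable to other groups but requires you to actually pin down the cohomological obstruction and prove it vanishes (and note that \cite{Ko2} \S14 itself proceeds via CM fields and Honda--Tate, not via the pure $H^1$ gluing you sketch, so that citation does not do quite the work you want). The paper's approach is shorter here because the algebra $B$ is visibly at hand and the embedding question is classical.
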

\begin{proof}
Let $(A,\iota,\lambda)$ be a $c$-polarized virtual abelian variety over $\F_{p^t}$ coming from a $\F_{p^t}$-point of $S_{K^p}$ inside the isogeny class determined by $\varepsilon$. Consider the Frobenius morphism $\pi_A\in End_B(A)$. Then $F(\pi_A)$ is a CM field, which is also the center of the division algebra $End_B(A)$. The Rosati involution $\ast$ on $End_B(A)$ induced by $\lambda$ preserves $F(\pi_A)$. Since by assumption, the conjugacy class of $N\delta$ is stable under the Galois group $\Gamma$, we have an embedding $F(\pi_A)\subset B^{opp}$. Moreover, by the definition of the moduli problem, the two involutions $\ast$ and $\sharp$ are compatible under this embedding. Then we have $\pi_A\in G(\Q)$ under the above embedding. We can take $\gamma_0=\pi_A^{-1}$. The conditions in definition 5.2 for $(\gamma_0;\gamma,\delta)$ being a Kottwitz triple can be verified as in \cite{Ko2} section 14.
\end{proof}
The converse of this lemma is clearly true. In particular, for our group $G$ which is not quasi-split at $p$, the set of Kottwitz triples is not enough for parameterizing all the $\F_{p^t}$-points of our Shimura varieties. Maybe it is possible to define some generalized Kottwitz triples by introducing some suitable inner form of $G$, such that these generalized Kottwitz triples parameterize all the points on the Shimura varieties over finite fields. But we will not pursue this subject here.
Now the key point is the following theorem, which can be viewed as a generalization of the conjecture 5.7 of \cite{Ra} which was proved in the maximal level case by Waldspurger. See also \cite{Ra1} conjecture 10.2 in the more general setting.
\begin{theorem}
If the conjugacy class of $N\delta$ does not contain an element of $G(\Q_p)$, then for any $j\geq 1, \tau\in Frob^jI_E$ and any $h\in C_c^\infty(G(\Z_p))$, we have the twisted orbital integral of the test function $\phi_{\tau,h}$ vanishes
\[TO_{\delta\sigma}(\phi_{\tau,h})=0.\]
In particular, there is a function $f_{\tau,h}\in C_c^\infty(G(\Q_p))$ which has matching (twisted) orbital integrals with $\phi_{\tau,h}\in C_c^\infty(G(\Q_{p^t}))$.
\end{theorem}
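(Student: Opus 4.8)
The plan is to reduce the statement to the one-factor case $r=s=1$, where Theorem 3.4 already determines $H_\xi$, and then to confront that formula with the point-counting formula of Proposition 5.1.

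\emph{Reduction.} By the product decomposition of the test functions (Section 4), for $\delta=(\delta_1,\dots,\delta_r,\delta',p^{-1}d)$ one has $TO_{\delta\sigma}(\phi_{\tau,h_1,\dots,h_r,h'})=\big(\prod_{i=1}^rTO_{\delta_i\sigma}(\phi_{\tau,h_i})\big)\,TO_{\delta'\sigma_0}(\phi_{h'})\,TO_{(p^{-1}d)\sigma}(\phi_{\tau,h_{\G_m}})$, and any $h\in C_c^\infty(G(\Z_p))$ is a finite sum of such products. For the $\G_m$-factor and the étale factor $G_{D'}$ there is no obstruction (the norm map on $\Q_{p^t}^\times$, resp. Propositions 4.2--4.3 of \cite{Sch2}); so if the conjugacy class of $N\delta$ meets no $G(\Q_p)$, then that of $N\delta_{i_0}$ meets no $(Res_{F_{\vp_{i_0}}|\Q_p}D_{i_0}^\times)(\Q_p)$ for some $i_0$, and it suffices to prove $TO_{\delta_{i_0}\sigma}(\phi_{\tau,h_{i_0}})=0$. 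This concerns only the local datum $(F_{\vp_{i_0}},D_{i_0})$, $\mathrm{inv}\,D_{i_0}=1/n$, and deformation spaces of special $O_{D_{i_0}}$-modules, i.e. exactly the setting of Section 3 with $r=s=1$; by the usual globalization (prescribing the behaviour at $p$ of auxiliary fields and a division algebra, as in \cite{HT}) we realize it inside a global datum of Section 2 with $r=s=1$. From now on assume $r=s=1$, so $I_{\Q_p}=J_b(\Q_p)=GL_n(E)\times\Q_p^\times$ is quasi-split and $G$, hence $I$, is anisotropic modulo centre over $\Q$.

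\emph{Two expressions for the trace.} Fix $j\ge1$, $\tau\in Frob^jI_E$, $h\in C_c^\infty(G(\Z_p))$; let $f^p\in C_c^\infty(G(\A_f^p))$ and $\xi$ vary. Proposition 5.1 gives
\[tr(\tau\times hf^p|H_\xi)=\sum_{\varepsilon}\tr{vol}(I_\varepsilon(\Q)\setminus I_\varepsilon(\A_f))\,O_\gamma(f^p)\,TO_{\delta\sigma}(\phi_{\tau,h})\,tr\xi(\varepsilon),\]
the sum over conjugacy classes $\varepsilon$ in $I(\Q)$, which we split into the ``good'' ones (giving an elliptic Kottwitz triple, Lemma 5.4) and the ``exotic'' ones. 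On the other hand, the $p$-adic Matsushima formula of Section 3 together with Boyer's computation of the Drinfeld tower and the Faltings--Fargues isomorphism --- the input of Theorem 3.4 --- express the same trace as
\[tr(\tau\times hf^p|H_\xi)=\sum_{\Pi\in\mathcal{A}(I)_\xi}tr(f^p|\Pi^p)\,tr(h\times\tau|\tr{Mant}_\mu(\Pi_p)),\]
with the essential feature that $\tr{Mant}_\mu(\Pi_p)=0$ unless the $GL_n(E)$-component $\Pi_{p,0}$ of $\Pi_p$ is a discrete series or a local Speh representation (Theorem 3.3, with Proposition 2.1.17 and 5.2 of \cite{Dat}).

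\emph{The exotic terms vanish.} Since $I$ is anisotropic, the right side of Proposition 5.1 is the full geometric side of the (simple) trace formula for $I$ with test function $f^p\cdot f_{I,p}\cdot f_{I,\infty}$, where $f_{I,\infty}$ is an Euler--Poincaré function for $\xi$ and $f_{I,p}\in C_c^\infty(J_b(\Q_p))$ realizes the twisted orbital integrals of $\phi_{\tau,h}$ as ordinary orbital integrals on $J_b(\Q_p)=GL_n(E)\times\Q_p^\times$ --- possible unconditionally because $J_b$ is quasi-split (compose the inner transfer $G(\Q_{p^t})\to J_b(\Q_{p^t})$ to the quasi-split form with base change to $\Q_p$). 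Its spectral side is $\sum_{\Pi\in\mathcal{A}(I)_\xi}tr(f^p|\Pi^p)\,tr(f_{I,p}|\Pi_p)\,c_\xi$ with $c_\xi$ the Euler characteristic of $\xi$-cohomology; comparing with the Matsushima expression, and using linear independence of the $tr(\cdot|\Pi^p)$ together with multiplicity one to match term by term, we get $tr(f_{I,p}|\Pi_p)=\pm\,tr(h\times\tau|\tr{Mant}_\mu(\Pi_p))$ for all globalizable $\Pi_p$, hence $tr(f_{I,p}|\Pi_p)=0$ whenever $\Pi_{p,0}$ is neither a discrete series nor a local Speh representation. A local analysis on $GL_n(E)$ then converts this spectral vanishing --- via the Jacquet--Langlands correspondence with $D^\times$ (invariant $1/n$) --- into the statement that $O_\gamma(f_{I,p})$, equivalently $TO_{\delta\sigma}(\phi_{\tau,h})$, vanishes precisely on the classes not transferred from $G(\Q_p)$, i.e. the exotic $\delta$. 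Finally, varying $f^p$ (to isolate a stable class $\gamma$ by linear independence of orbital integrals) and $\xi$ (to isolate a single exotic $\varepsilon$, using positivity of the volume) makes the vanishing pointwise, and every exotic $\delta$ in the compact support of $\phi_{\tau,h}$ is realized by an $\F_{p^t}$-point of $S_{K^p}$ for a suitable global datum, so $TO_{\delta\sigma}(\phi_{\tau,h})=0$ for all exotic $\delta$. Undoing the reduction gives the statement for arbitrary $r$; and once this necessary vanishing holds, the support of $\phi_{\tau,h}$ being compact (Proposition 4.3), the existence of $f_{\tau,h}\in C_c^\infty(G(\Q_p))$ with matching (twisted) orbital integrals follows from the transfer results of \cite{Sch3}.

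\emph{Main obstacle.} The crux is the passage, in the third step, from the clean spectral fact ``$H_\xi$ sees only discrete-series-type representations at $p$'' (immediate from Boyer--Dat once $H_\xi$ is known) to the vanishing of individual twisted orbital integrals of the specific function $\phi_{\tau,h}$: this needs the geometric-to-spectral dictionary for the quasi-split anisotropic inner form $I$, a local comparison on $GL_n(E)$ matching ``no contribution to $\tr{Mant}_\mu$'' with orbital integrals supported on the classes transferred from $D^\times$, and density arguments (over globalizations and over $f^p,\xi$) to descend from sums to pointwise values. The restriction $r=s=1$ is essential, as the general case of $H_\xi$ is precisely the goal and only the already established case $r=1$ may be used.
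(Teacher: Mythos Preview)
Your overall architecture coincides with the paper's: reduce to $r=s=1$, transfer $\phi_{\tau,h}$ to a function $f_{I,p}$ (the paper writes $f^{\mathbf{H}}_{\tau,h}$) on the quasi-split form $J_b(\Q_p)=GL_n(E)\times\Q_p^\times$, recognise the right-hand side of Proposition 5.1 as the geometric side of the simple trace formula for $I$, compare with the known description of $H_\xi$ from Section 3, deduce a spectral vanishing for $f_{I,p}$, and convert this to the desired vanishing of orbital integrals. The paper does exactly this.

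There are, however, two places where your write-up diverges from the paper and where the argument as you state it does not close. First, you compare with the intermediate $p$-adic Matsushima/Mant expression and conclude only that $tr(f_{I,p}|\Pi_p)=0$ when $\Pi_{p,0}$ is \emph{neither} a discrete series \emph{nor} a local Speh representation. That is not enough: the Jacquet--Langlands transfer criterion you invoke (``comes from $D^\times$'') requires $tr(f_{I,p}|\Pi_p)=0$ for \emph{all} non-discrete $\Pi_{p,0}$, including Speh. The paper avoids this by comparing instead with the final statement of Theorem 3.4, where $H_\xi$ is written as a sum over $\pi_f$ on $G(\A_f)$ (so the $p$-component lives on $D^\times\times\Q_p^\times$); isolating $\Pi^p$ then forces the $H_\xi$-side to be zero whenever $\Pi_{p,0}$ is not a discrete series, Speh included. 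Your ``local analysis on $GL_n(E)$'' does not explain how to dispose of the Speh case.

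Second, the paper's passage from ``globalisable $\Pi_p$'' to ``all $\Pi_p$'' is via Zariski density of local components of automorphic representations in the Bernstein variety (citing Shin), together with regularity of $\Pi_p\mapsto tr(f_{I,p}|\Pi_p)$; this is what lets one conclude vanishing for every non-discrete $\Pi_p$ and hence, by the standard characterisation, that $f_{I,p}$ descends to $D^\times$. Your density step (``varying $f^p,\xi$'' and ``every exotic $\delta$ is realised by an $\F_{p^t}$-point for a suitable global datum'') is both vaguer and takes an unnecessary detour through realising individual $\delta$'s geometrically; the paper never needs that, since once the spectral vanishing holds for all non-discrete $\Pi_p$, the orbital-integral vanishing on non-elliptic classes follows directly, and then $TO_{\delta\sigma}(\phi_{\tau,h})=O_{\gamma_p}(f^{\mathbf{H}}_{\tau,h})=0$ for every exotic $\delta$ at once.
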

\begin{proof}
By the properties of the test function $\phi_{\tau,h}$, we can assume we are in the local quasi-EL case such that $G(\Q_p)=D^\times\times\Q_p^\times$, where $D$ is a central division algebra with invariant $\frac{1}{n}$ over a finite extension $E$ of $\Q_p$. Then the test functions $\phi_{\tau,h}$ are defined for the Shimura varieties which were studied in section 2 at $p$. With the notation of section 2, we have specially simple form: $r=1$ and $G_{D'}=1$.
By proposition 5.1, we have the formula
\[tr(\tau\times hf^p|H_\xi)=\sum_{\varepsilon, A_\varepsilon\neq \emptyset}\tr{vol}(I_\varepsilon(\Q)\setminus I_\varepsilon(\A_f))
O_\gamma(f^p)TO_{\delta\sigma}(\phi_{\tau,h})tr\xi(\varepsilon),\]
where $\varepsilon$ runs over the set of conjugacy classes of $I(\Q)$ such that $A_\varepsilon\neq \emptyset$, with the associated (twisted) conjugacy classes $\gamma$ and $\delta$ as above. Recall that in the above situation $I_{\Q_p}=(Res_{E|\Q_p}GL_n)\times \G_m$. Let $G^\ast=I_{\Q_p}$ be the quasi-split inner form of $G$ over $\Q_p$. Then we have the norm map \[N: \{\sigma-\tr{conjugacy classes in}\, G(\Q_{p^t})\}\longrightarrow \{\tr{conjugacy classes in}\, G^\ast(\Q_p)=GL_n(E)\times\Q_p^\times\}\] from which we can define a transfer $f^\ast_{\tau,h}\in C_c^\infty(G^\ast(\Q_p))$, such that for any conjugacy class $\{\gamma_p\}$ not in the image of $N$ we have
\[O_{\gamma_p}(f^\ast_{\tau,h})=0,\]and for $\{\gamma_p\}=N\{\delta\}$ we have
\[O_{\gamma_p}(f^\ast_{\tau,h})=e(\delta)TO_{\delta\sigma}(\phi_{\tau,h}),\]where $e(\delta)$ is the Kottwitz sign of $G_{\delta\sigma}$. By construction, for $\delta$ coming from $\varepsilon$, we have $N\{\delta\}=\{\gamma_p\}$ where $\{\gamma_p\}$ is the image of $\varepsilon$ in $I(\Q_p)$.
Then we can rewrite the above formula as
\[tr(\tau\times hf^p|H_\xi)=\sum_{\varepsilon}\tr{vol}(I_\varepsilon(\Q)\setminus I_\varepsilon(\A_f))
O_\gamma(f^p)e(\delta)O_{\gamma_p}(f^\ast_{\tau,h})tr\xi(\varepsilon).\]Note in particular by definition of $f^\ast_{\tau,h}$, in the above sum $\varepsilon$ runs over the set of all conjugacy classes of $I(\Q)$.
We identify the center $Z\subset G$ as a subgroup of $I$ and $I_\varepsilon$. Recall the Tamagawa number of $I_\varepsilon$ is defined by
\[\tau(I_\varepsilon)=\tr{vol}(I_\varepsilon(\Q)A_G(\R)^0\setminus I_\varepsilon(\A)),\]where $A_G$ is the split component of $Z$ and $A_G(\R)^0$ is the connected component containing the identity element. So the factor $\tr{vol}(I_\varepsilon(\Q)\setminus I_\varepsilon(\A_f))$ equals
 \[\tau(I_\varepsilon)\tr{vol}(A_G(\R)^0\setminus I_\varepsilon (\R))^{-1}.\]Let $f_\infty$ be a pseudo-coefficient on $I(\R)$ for $\check{\xi}$. Denote the image of $\varepsilon$ in $I(\R)$ by $\gamma_\infty$. Then as p. 659 of \cite{Ko3} (recall that the group $I(\R)$ is compact modulo center) we have \[O_{\gamma_\infty}(f_\infty)=e(\gamma_\infty)\tr{vol}(A_{G}(\R)^0\setminus I_\varepsilon(\R))^{-1}tr\xi(\varepsilon),\]where $e(\gamma_\infty)$ is the Kottwitz sign of $I_{\R}$. As in \cite{Ra2} p. 690 or \cite{Re} section 10, we have $e(\delta)=e(\gamma_\infty)$. Therefore, we get
\[tr(\tau\times hf^p|H_\xi)=\sum_{\varepsilon}\tau(I_\varepsilon)O_{\gamma}(f^p)O_{\gamma_p}(f^{\ast}_{\tau,h})
O_{\gamma_\infty}(f_\infty).\]
Now we apply the simple trace formula for the group $I$ and the function $f=f^pf_{\tau,h}^{\ast}f_\infty$. The above equals
\[\sum_{\pi}m(\pi)tr\pi(f),\]
where $\pi$ runs over the automorphic representations of $I(\A)$ whose central character is the inverse of that of $\xi$ on $A_{G}(\R)^0$.

On the other hand, we have known by theorem 3.4
\[H_\xi=\sum_{\pi_f}a(\pi_f)\pi_f\otimes(r_{-\mu}\circ\varphi_{\pi_p}|_{W_E})|-|^{(1-n)/2}\]
as virtual $G(\A_f)\times W_E$-representations. Now by the comparison of this formula with the above trace formula, we can conclude that for any irreducible smooth representation $\pi_p=\pi_p^0\otimes\chi_p$ of $G^\ast(\Q_p)=GL_n(E)\times\Q_p^\times$, if $\pi_p^0$ is not a discrete series, i.e. it does not come from an irreducible smooth representation of $D^\times$, then
\[tr\pi_p(f_{\tau,h}^\ast)=0.\]
Indeed, for any such $\pi_p$, suppose first that we can find an automorphic representation $\pi$ of $I$ with the $p$-component as $\pi_p$ and $m(\pi)\neq 0$. Take compact open subgroups $K^p\subset I(\A_f^p)=G(\A_f^p), K_p\subset I(\Q_p)$ such that $\pi_f^p$ has $K^p$-invariants and $\pi_p$ has $K_p$-invariants. Let $K=K^pK_p$. Then there are only finitely finitely irreducible admissible representations $\pi_f'$ of $I(\A_f)$ such that $\pi_f'$ occurs as the finite adelic component of an automorphic representation with central character the inverse of $\xi$ on $A_{G}(\R)^0$, $\pi_f'$ has $K$-invariants, and $m(\pi')\neq 0$. One can then find a function $f^p\in C_c^\infty(I(\A_f^p))$ biinvariant under $K^p$ with $tr(f^p|\pi_f^p)=1$ and such that whenever $\pi_f'$ is an irreducible admissible representation of $I(\A_f)$ with $tr(\pi_f')^p(f^p)\neq 0$, with $(\pi_f')^p$ has invariants under $K^p$, then $(\pi_f')^p\simeq \pi_f^p$ (which implies $\pi_f'=\pi_f$ by lemma 3 of \cite{H1}. In fact the situation of loc. cit. is under more restrictive hypotheses, but the same proof applies.). Take such an $f^p$. Recall that by our choice of $f_\infty$ we have $tr\pi_\infty(f_\infty)=1$. Then the right hand of the trace formula has only one term
\[m(\pi)tr\pi_p(f^\ast_{\tau,h}),\]which has to be zero according to the description of $H_\xi$. 

For the general case, we use the facts that the set of local $p$-components of automorphic representations of $I$ is Zariski dense in the Bernstein variety of $G^\ast(\Q_p)$, see for example proposition 3.1 of \cite{Shin}. More precisely, let \[\mathfrak{z}_2=\coprod_{(L,D)\in\mathfrak{S}(GL_n(E))}V(L,D)\] be the Bernstein variety of $GL_n(E)$ as introduced in \cite{DKV} A.4. Here we use the notation of 2.1 of \cite{Shin}. This is the discrete series variant of the usual Bernstein variety $\mathfrak{z}$. Let $Irr(GL_n(E))$ be the set of isomorphism classes of irreducible smooth representations of $GL_n(E)$. Then thanks to the Bernstein-Zelevinsky classification for $GL_n(E)$, we get a map with finite fibers
\[r: Irr(GL_n(E))\ra \mathfrak{z}_2,\]such that its composition with the finite map $\mathfrak{z}_2\ra \mathfrak{z}$ (cf. remark 2.2 of \cite{Shin}, and see also the example of A.4.e in p. 64 of \cite{DKV} for an explicit description of this finite map) is the usual map $Irr(GL_n(E))\ra   \mathfrak{z}$ sending a representation to its supercuspidal support. By the classification of  discrete series in \cite{Z}, the set of isomorphism classes of discrete series of $GL_n(E)$ is exactly the inverse image under $r$ of the components $V(L,D)$ with $L=G$ (see also example 2.3 of \cite{Shin}). Take any $(L,D)\in \mathfrak{S}(GL_n(E))$ with $L\neq G$. Proposition 3.1 of \cite{Shin} says that the set of local $p$-components of automorphic representations of $I$ (which are non discrete series) \[Y=\{x\in V(L,D)|\,\exists\, \Pi\subset \mathcal{A}(I)_\xi,\, x=r(\Pi_{p,0})\}\] is Zariski dense in $V(L,D)$. (Note that by our definitions of $\mathcal{A}(I)_\xi$ in section 3 and of $f_\infty$, for $\Pi\subset \mathcal{A}(I)_\xi$, we have $tr\Pi_\infty(f_\infty)\neq 0$.)
By proposition 2.2 of \cite{Shin}, the function $\pi_p\longmapsto tr\pi_p(f_{\tau,h}^\ast)$ is regular on the Bernstein variety $\mathfrak{z}_2$. Therefore, $tr\pi_p(f_{\tau,h}^\ast)=0$ for any non discrete series representation $\pi_p$.

This implies if $\gamma_p$ does not come from an element of $G(\Q_p)$, i.e. the conjugacy class of $N\delta$ does not include an element of $G(\Q_p)$, then
\[O_{\gamma_p}(f^{\ast}_{\tau,h})=0,\]cf. \cite{Bad} lemma 3.3. This means that $f_{\tau,h}^{\ast}$ comes from a function $f_{\tau,h}\in C_c^\infty(G(\Q_p))$, such that \[tr\pi_p(f_{\tau,h})=e(\delta)tr\pi_p^\ast(f_{\tau,h}^{\ast}),\]where $\pi_p\longmapsto \pi_p^\ast$ is the Jacquet-Langlands correspondence between the set of smooth irreducible representations of $G(\Q_p)$ and $G^\ast(\Q_p)$. The functions $f_{\tau,h}^{\ast}$ and $f_{\tau,h}$ have matching orbital integrals, hence $f_{\tau,h}$ and $\phi_{\tau,h}$ have matching (twisted) orbital integrals.

\end{proof}

We have the following theorem. It says that the fixed points which have non trivial contribution to the trace formula can be parameterized by Kottwitz triples, and their contribution have the usual description as in the quasi-split case of \cite{Sch3}.
\begin{theorem}
Let $f^p\in C_c^\infty(G(\A_f^p)), h\in C_c^\infty(G(\Z_p))$ and $\tau\in Frob^jI_E\subset W_E$, then
\[tr(\tau\times hf^p|H_\xi)=\sum_{(\gamma_0;\gamma,\delta)}c(\gamma_0;\gamma,\delta)O_\gamma(f^p)TO_{\delta\sigma}(\phi_{\tau,h})
tr\xi(\gamma_0),\]
where the sum runs over degree-$j$-Kottwitz triples, and $c(\gamma_0;\gamma,\delta)$ is a volume factor defined as in \cite{Ko2} p. 441. The Haar measures on $G(\Q_p)$ resp. $G(\Q_{p^t})$ are normalized by giving $G(\Z_p)$ resp. $G(\Z_{p^t})$ volume 1.
\end{theorem}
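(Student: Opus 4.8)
The plan is to deduce this from the first form of the trace formula (Proposition 5.1) by reorganizing the sum over $I(\Q)$-conjugacy classes $\varepsilon$ into a sum over Kottwitz triples, exactly along the lines of \cite{Ko2} sections~2--4 and \cite{Sch3}, the only genuinely new input being the vanishing Theorem~5.4. First I would invoke Proposition~5.1 to write
\[tr(\tau\times hf^p|H_\xi)=\sum_{\varepsilon}\mathrm{vol}(I_\varepsilon(\Q)\setminus I_\varepsilon(\A_f))O_\gamma(f^p)TO_{\delta\sigma}(\phi_{\tau,h})tr\xi(\varepsilon),\]
with $\gamma,\delta$ attached to $\varepsilon$ as in section~5, and then apply Theorem~5.4: whenever the conjugacy class of $N\delta$ does not meet $G(\Q_p)$ the factor $TO_{\delta\sigma}(\phi_{\tau,h})$ vanishes, so such $\varepsilon$ can be discarded. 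The surviving $\varepsilon$ are precisely those for which, by Lemma~5.3, there is a degree-$j$-Kottwitz triple $(\gamma_0;\gamma,\delta)$ with $\gamma_0\in G(\Q)$ stably conjugate to $\varepsilon$, to $\gamma$, and to $N\delta$.

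Next I would observe that the summand depends, up to the volume factor, only on the equivalence class of the triple: $\gamma$ and $\delta$ are the very data occurring in the triple, so $O_\gamma(f^p)$ and $TO_{\delta\sigma}(\phi_{\tau,h})$ are intrinsic to it, while $tr\xi(\varepsilon)=tr\xi(\gamma_0)$ because $\varepsilon$ and $\gamma_0$ are (semisimple and) conjugate over $\ov{\Q}$ under the inner twisting $I\to G$, so the character of the algebraic representation $\xi$ of $G$ agrees on them. It then remains to do the group-theoretic bookkeeping of \cite{Ko2} p.~441: the centralizer $I_\varepsilon$ is an inner form over $\Q$ of $I_0:=G_{\gamma_0}$ (twisted at $p$ and $\infty$ by the inner twisting defining $I$), the conjugacy classes $\varepsilon'$ in a fixed stable class mapping to a given triple are counted by the appropriate Galois-cohomology set built from $ker^1(\Q,I_0)$ (with the Kottwitz sign/invariant trivial here, as the whole special fiber is basic), and summing $\mathrm{vol}(I_{\varepsilon'}(\Q)\setminus I_{\varepsilon'}(\A_f))$ over this set --- using the Tamagawa number $\tau(I_0)$ and the local volume terms --- reproduces exactly $c(\gamma_0;\gamma,\delta)$. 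Since $I$ here plays the role that $G$ plays in \cite{Ko2} and the relevant auxiliary statements (such as lemma~5.5 of \cite{Mi}) carry over to $I$ verbatim, as already noted in section~5, no new argument is needed at this step.

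Finally one must check that enlarging the sum from triples realized by some $\varepsilon$ to all degree-$j$-Kottwitz triples changes nothing, i.e.\ the effectivity statement: if $(\gamma_0;\gamma,\delta)$ is a Kottwitz triple with $O_\gamma(f^p)\neq 0$ and $TO_{\delta\sigma}(\phi_{\tau,h})\neq 0$, then by the properties of $\phi_{\tau,h}$ (section~4) $\delta$ is $\sigma$-conjugate to an element coming from a special $O_D$-module, $\gamma$ comes from a compatible prime-to-$p$ Tate module, and by Honda--Tate theory together with Kottwitz's realization argument (redone in \cite{Sch3} in the quasi-EL setting) these glue to a $c$-polarized virtual abelian variety over $\F_{p^t}$ with level structure of type $K^p$, hence to a point of $S_{K^p}(\F_{p^t})$ and thus to some $\varepsilon$; so non-effective triples contribute zero automatically. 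The main obstacle in the whole argument is Theorem~5.4 itself, the vanishing of the twisted orbital integrals of $\phi_{\tau,h}$ in the non-quasi-split range, which has already been established by the global method of section~5; granting it, the present theorem is a purely formal reorganization of Proposition~5.1 in the style of \cite{Ko2} and \cite{Sch3}.
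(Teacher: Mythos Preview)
Your proposal is correct and follows essentially the same route as the paper: start from Proposition~5.1, use Theorem~5.4 to kill the terms with $N\delta$ not meeting $G(\Q_p)$, invoke Lemma~5.3 to attach Kottwitz triples to the surviving $\varepsilon$, identify $tr\xi(\varepsilon)=tr\xi(\gamma_0)$, and then do the Kottwitz bookkeeping to collapse the sum. The paper's own proof is terser---it cites \cite{Mi} proposition~6.11 for the count $|\ker(\ker^1(\Q,I(\gamma_0;\gamma,\delta))\to\ker^1(\Q,G))|$ of $\varepsilon$ giving a fixed triple and does not spell out the effectivity step you include---but the strategy is identical.
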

\begin{proof}
By proposition 5.1 we have the formula
\[tr(\tau\times hf^p|H_\xi)=\sum_{\varepsilon}\tr{vol}(I_\varepsilon(\Q)\setminus I_\varepsilon(\A_f))
O_\gamma(f^p)TO_{\delta\sigma}(\phi_{\tau,h})
tr\xi(\varepsilon),\]where $\varepsilon$ runs over the set of conjugacy classes of $I(\Q)$ such that $A_\varepsilon$ is non empty.
By the above vanishing theorem, only those $\varepsilon$ such that the conjugacy class of $N\delta$ for the associated $\delta$ have non trivial contribution to the this formula, in which case we can find a $\gamma_0\in G(\Q)$ such that $(\gamma_0;\gamma,\delta)$ forms a Kottwitz triple for $G$. Moreover, in this case the associated group $I(\gamma_0;\gamma,\delta)$ to $(\gamma_0;\gamma,\delta)$ can be taken as $I_\varepsilon$. Fix such a Kottwitz triple $(\gamma_0;\gamma,\delta)$. The number of conjugacy classes $\varepsilon$ which gives this triple equals to $|ker(ker^1(\Q,I(\gamma_0;\gamma,\delta))\ra ker^1(\Q,G))|$, cf. \cite{Mi} proposition 6.11. Clearly the traces of $\varepsilon$ and $\gamma_0$ are the same for the $\ov{\Q}_l$-representation $\xi$. Then one can rewrite the above formula in the form as in the theorem.

\end{proof}

\section{A character identity}
We continue the study of the cohomology group $H_\xi$. Let the notations be as in last section.
Since the global reductive group $G$ has trivial endoscopic groups, by the procedure of pseudostabilization, we get the following corollary.
\begin{corollary}For $\tau\in Frob^jI_E\subset W_E, \, h_1\in C_c^\infty(O_{D_1}^\times),\dots,h_r\in C_c^\infty(O_{D_r}^\times),h'\in C_c^\infty(O_{D'}^\times)$, $h_0\in C_c^\infty(\Z_p^\times)$, with all these functions take values in $\Q$ and $h'$ invariant under conjugation,
we have the following equality
\[Ntr(\tau\times h_1\times \cdots\times h_r\times h'\times h_{0}\times f^p|H_\xi)=tr(f_{\tau,h_1}\times\cdots\times f_{\tau,h_r}\times h'\times h_0'\times f^p|H_\xi),\]where $N=dim\, r_{-\mu}$, $h_0'\in C_c^\infty(\Q_p^\times)$ has support in $p^{-t}\Z_p^\times$ and is defined by $h_0'(x)=h_0(p^tx)$ for all $x\in p^{-t}\Z_p^\times$
\end{corollary}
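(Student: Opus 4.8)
The plan is to pseudo-stabilize the point counting formula of Theorem~5.5 and to compare the outcome with the description of $H_\xi$ as a $G(\A_f)$-module coming from Matsushima's formula. Since $G$ has only trivial elliptic endoscopic data no genuine stabilization is needed, and the comparison amounts to feeding the matching functions $f_{\tau,h_i}$ furnished by Theorem~5.4 into the simple trace formula for $G$ (equivalently for the inner form $I$, as in the proof of Theorem~5.4). Only ordinary orbital integrals will appear at the end, which is why the statement is a consequence of pseudo-stabilization.

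First I would apply Theorem~5.5 to the left hand side, taking the level function at $p$ to be $h:=h_1\times\cdots\times h_r\times h'\times h_0\in C_c^\infty(G(\Z_p))$. The product decomposition $G(\Q_{p^t})=(D_1\otimes_{\Q_p}\Q_{p^t})^\times\times\cdots\times(D_r\otimes_{\Q_p}\Q_{p^t})^\times\times G_{D'}(\Q_{p^t})\times\Q_{p^t}^\times$, together with the multiplicativity $\phi_{\tau,h}=\phi_{\tau,h_1}\times\cdots\times\phi_{\tau,h_r}\times\phi_{h'}\times\phi_{\tau,h_0}$ recorded in Section~4, makes $TO_{\delta\sigma}(\phi_{\tau,h})$ factor as a product of twisted orbital integrals over the simple factors. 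I would then replace each factor by an ordinary orbital integral of the transferred function: for $1\le i\le r$ the matching of Theorem~5.4 gives $TO_{\delta_i\sigma}(\phi_{\tau,h_i})=O_{\gamma_{0,i}}(f_{\tau,h_i})$ (legitimate because for an actual Kottwitz triple, which is all that Theorem~5.5 already sums over in view of the vanishing in Theorem~5.4, the norm $N\delta_i$ is conjugate into $D_i^\times$); for the $D'$-factor Proposition~4.3 of \cite{Sch2} gives $TO_{\delta'\sigma}(\phi_{h'})=O_{\gamma_0'}(h')$; and for the $\G_m$-factor the reciprocity computation at the end of Section~4 — which is precisely where the twist by $\mathrm{Art}_{\Q_p}(\tau)$ turns $h_0$ into $h_0'$ with $h_0'(x)=h_0(p^tx)$ — gives $TO_{\delta_{\G_m}\sigma}(\phi_{\tau,h_0})=O_{\gamma_{0,\G_m}}(h_0')$. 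Setting $\Phi_p:=f_{\tau,h_1}\times\cdots\times f_{\tau,h_r}\times h'\times h_0'\in C_c^\infty(G(\Q_p))$ and writing $\gamma_{0,p}\in G(\Q_p)$ for the $p$-component of $\gamma_0$, Theorem~5.5 becomes
\[tr(\tau\times h_1\times\cdots\times h_r\times h'\times h_0\times f^p|H_\xi)=\sum_{(\gamma_0;\gamma,\delta)}c(\gamma_0;\gamma,\delta)O_\gamma(f^p)O_{\gamma_{0,p}}(\Phi_p)tr\xi(\gamma_0),\]
a sum over degree-$j$-Kottwitz triples in which only ordinary orbital integrals occur.

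Next I would pseudo-stabilize this sum in the manner of Kottwitz \cite{Ko3}: express $c(\gamma_0;\gamma,\delta)$ through the Tamagawa number of $I_{\gamma_0}$, sum over the fibres of $(\gamma_0;\gamma,\delta)\mapsto\gamma_0$, insert an Euler--Poincar\'e function $f_\infty$ for $\check{\xi}$, and feed the result into the (endoscopy-free, hence ordinary) Arthur--Selberg trace formula, exactly as in the proof of Theorem~5.4. Collecting the archimedean contributions over the discrete series $L$-packet of $\check{\xi}$ at infinity and recalling the definition of $a(\pi_f)$ from \cite{Ko3} p.~657, the right hand side becomes $\sum_{\pi_f}a(\pi_f)tr\pi_p(\Phi_p)tr\pi_f^p(f^p)$. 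On the other hand Matsushima's formula identifies $H_\xi$, as a virtual $G(\A_f)$-representation, with $\sum_{\pi_f}N\,a(\pi_f)\,\pi_f$, the extra factor $N=\dim r_{-\mu}$ being the dimension of the (semisimplified) Galois representations $r_{-\mu}\circ\varphi_{\pi_p}|_{W_E}$ occurring in $H_\xi$ — equivalently the cardinality of that same $L$-packet. Hence $tr(\Phi_p\times f^p|H_\xi)=N\sum_{\pi_f}a(\pi_f)tr\pi_p(\Phi_p)tr\pi_f^p(f^p)=N\cdot tr(\tau\times h_1\times\cdots\times h_0\times f^p|H_\xi)$, which is the asserted identity.

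The hard part is the bookkeeping of the numerical factor: one must check that pseudo-stabilization produces exactly the coefficient $a(\pi_f)$ of \cite{Ko3} while $H_\xi$ carries the coefficient $N\,a(\pi_f)$, so that their discrepancy is precisely $N=\dim r_{-\mu}$, with the Euler--Poincar\'e sign $(-1)^{q(G)}$, the Kottwitz signs $e(\cdot)$, and the Jacquet--Langlands signs relating orbital integrals on the division algebras $D_i^\times$ to those on their split inner forms all cancelling against the conventions built into \cite{Ko3} and into Theorem~3.4; and one must confirm that the $\G_m$-factor is accounted for entirely by the identity for $\phi_{\tau,h_{\G_m}}$ at the end of Section~4, including its dependence on $\mathrm{Art}_{\Q_p}(\tau)$. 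One should also be a little careful that the endoscopy-free stabilization of the Kottwitz-triple sum goes through as in the quasi-split good-reduction case even though $G$ is non-quasi-split at $p$; this is harmless here because the only obstruction is removed by the vanishing of Theorem~5.4. Modulo these normalizations, which are routine given Theorems~5.4 and~5.5 and the $r=1$ computation of Section~3, the argument is purely formal.
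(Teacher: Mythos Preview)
Your proposal is correct and follows essentially the same route as the paper: start from the Kottwitz-triple expression of Theorem~5.5, replace each factor of $TO_{\delta\sigma}(\phi_{\tau,h})$ by the matching ordinary orbital integral (Theorem~5.4 for the $D_i$-factors, \cite{Sch2} Prop.~4.3 for the $D'$-factor, and the explicit $\G_m$-computation for $h_0\mapsto h_0'$), pseudo-stabilize \`a la \cite{Ko3}, and compare with Matsushima's formula $H_\xi=N\sum_{\pi_f}a(\pi_f)\pi_f$. The only cosmetic difference is that the paper packages the pseudo-stabilization by directly invoking Kottwitz's argument on pp.~661--663 of \cite{Ko3} and his Lemma~4.2 rather than spelling out the factorization over simple factors as you do; also note that the trace formula used here is for $G$ itself (not for $I$ as in the proof of Theorem~5.4), and no Jacquet--Langlands sign enters since the $f_{\tau,h_i}$ already live on $D_i^\times$.
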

\begin{proof}
With the expression in the last theorem, we can apply Kottwitz's argument of pseudostabilization in \cite{Ko3} (see also the proof of corollary 9.4 in \cite{Sch2}). We just sketch the most important steps in the calculation. First, note the functions $\phi_{\tau,h}=\phi_{\tau,h_1}\times\cdots\phi_{\tau,h_r}\times \phi_{h'}\times h_0$ and $f_{\tau,h_1}\times\cdots\times f_{\tau,h_r}\times h'\times h_0'$ have matching (twisted) orbital integrals. As in \cite{Ko3} p. 657 Kottwitz introduced a function $f_\infty$ on $G(\R)$ depending on $\xi$. Consider the function
\[f=f_{\tau,h_1}\times\cdots\times f_{\tau,h_r}\times h'\times h_0'\times f^p\times f_\infty\]
be the function on $G(\A)$. Then arguing as p.661-663 of loc. cit. we get the left hand side equals
\[N\tau(G)\sum_{\gamma_0}SO_{\gamma_0}(f),\]
where $\tau(G)$ is the Tamagawa number of $G$, $\gamma_0$ runs through the stable conjugacy classes in $G(\Q)$ and $SO_{\gamma_0}(f)$ is the stable orbital integral. By the Arthur-Selberg trace formula for $f$ we get it equals
\[N\sum_{\pi}m(\pi)tr(f|\pi),\]where $\pi$ runs through automorphic representations for $G$ with central character $\check{\xi}$. By lemma 4.2 of \cite{Ko3} this can be rewritten as
\[\sum_{\pi_f}tr(f_{\tau,h_1}\times\cdots\times f_{\tau,h_r}\times h'\times h_0'\times f^p|\pi_f)
\sum_{\pi_\infty}m(\pi_f\otimes\pi_\infty)ep(\pi_\infty\otimes\xi),\]where $ep$ denotes the Euler-Poincare characteristic as in loc. cit.. Now Matsushima's formula shows that this equals
\[tr(f_{\tau,h_1}\times\cdots\times f_{\tau,h_r}\times h'\times h_0'\times f^p|H_\xi).\]

\end{proof}

Consider the case $r=1$. Then the above corollary combined with theorem 3.4 gives us the following character identity.
\begin{proposition}For
$\rho\in Irr(D^\times)$ with $L$-parameter $\varphi_\rho$, $h\in C_c^\infty(O_D^\times)$ with values in $\Q$ and the conjugacy class $\{\mu\}$ of cocharacters $\mu$ defined in section 4, we have the identity
\[tr(f_{\tau,h}|\rho)=tr(\tau|(r_{-\mu}\circ\varphi_\rho|_{W_E})|-|^{(1-n)/2})tr(h|\rho).\]
\end{proposition}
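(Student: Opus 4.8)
The plan is to combine the trace identity of corollary 6.1, specialised to $r=1$, with the explicit formula for $H_\xi$ from theorem 3.4, and then to separate variables. Taking $r=1$ in corollary 6.1 gives, for all $\Q$-valued $h\in C_c^\infty(O_D^\times)$, $h_0\in C_c^\infty(\Z_p^\times)$ and $f^p\in C_c^\infty(G(\A_f^p))$,
\[
N\,tr(\tau\times h\times h_0\times f^p|H_\xi)=tr(f_{\tau,h}\times h_0'\times f^p|H_\xi),
\]
with $h_0'$ as in corollary 6.1. In this case $G(\Q_p)=D^\times\times\Q_p^\times$, so each irreducible $\pi_p$ factors as $\rho\otimes\chi_p$ with $\rho\in Irr(D^\times)$ and $\chi_p$ a character of $\Q_p^\times$, and $r_{-\mu}\circ\varphi_{\pi_p}|_{W_E}$ factors as $(r_{-\mu}\circ\varphi_\rho|_{W_E})\otimes(r_{-\mu_1}\circ\varphi_{\chi_p}|_{W_E})$, the second tensor factor being attached to the similitude factor. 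Substituting the formula of theorem 3.4 for $H_\xi$ into both sides (expanding the right-hand side by Matsushima's formula, as in the proof of corollary 6.1), both sides become the same kind of spectral sum indexed by triples $(\rho,\chi_p,\pi^p)$, and the overall factor $N$ cancels.

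Next I would match the similitude contribution by the local class field theory identity recorded at the end of section 4: since $h_0'$ has matching orbital integrals with the $\G_m$-component function $\phi_{\tau,h_0}$, one has $tr(h_0'|\chi_p)=tr(\tau|r_{-\mu_1}\circ\varphi_{\chi_p}|_{W_E})\,tr(h_0|\chi_p)$. Hence in the term of the spectral sum indexed by $\pi_f=\rho\otimes\chi_p\otimes\pi^p$ the nonzero scalar
\[
c_{\rho,\chi_p,\pi^p}(h_0,f^p):=a(\rho\otimes\chi_p\otimes\pi^p)\,tr(\tau|r_{-\mu_1}\circ\varphi_{\chi_p}|_{W_E})\,tr(h_0|\chi_p)\,tr(f^p|\pi^p)
\]
appears as a common factor on the two sides, and the identity reduces to
\[
\sum_{\rho,\chi_p,\pi^p}c_{\rho,\chi_p,\pi^p}(h_0,f^p)\Bigl(tr(h|\rho)\,tr\bigl(\tau\,|\,(r_{-\mu}\circ\varphi_\rho|_{W_E})|-|^{(1-n)/2}\bigr)-tr(f_{\tau,h}|\rho)\Bigr)=0
\]
for all $h$, $h_0$ and $f^p$.

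Then I would separate variables. At a fixed level only finitely many pairs $(\chi_p,\pi^p)$ contribute, and the functionals $f^p\mapsto tr(f^p|\pi^p)$ together with $h_0\mapsto tr(\tau|r_{-\mu_1}\circ\varphi_{\chi_p}|_{W_E})\,tr(h_0|\chi_p)$ are linearly independent; hence for each $(\chi_p,\pi^p)$
\[
\sum_{\rho}a(\rho\otimes\chi_p\otimes\pi^p)\Bigl(tr(h|\rho)\,tr\bigl(\tau\,|\,(r_{-\mu}\circ\varphi_\rho|_{W_E})|-|^{(1-n)/2}\bigr)-tr(f_{\tau,h}|\rho)\Bigr)=0
\]
for all $h\in C_c^\infty(O_D^\times)$. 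Finally, for distinct $\rho$ occurring with nonzero $a$-coefficient the characters $h\mapsto tr(h|\rho)$ of the compact group $O_D^\times$ and the functionals $h\mapsto tr(f_{\tau,h}|\rho)$ are linearly independent (fixing the central character removes the remaining unramified-twist ambiguity), so $tr(f_{\tau,h}|\rho)=tr(h|\rho)\,tr(\tau|(r_{-\mu}\circ\varphi_\rho|_{W_E})|-|^{(1-n)/2})$ for every $\rho$ that contributes to $H_\xi$.

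To pass to an arbitrary $\rho\in Irr(D^\times)$, I would use that for fixed $\tau$ and $h$ both sides of the asserted identity are regular functions of $\rho$ on the Bernstein variety of $D^\times$ — the left side because $f_{\tau,h}\in C_c^\infty(D^\times)$ is a fixed function, the right side because within each Bernstein component $\varphi_\rho$ varies by an unramified twist — and that the $D^\times$-components of the automorphic representations of $I$ contributing to $H_\xi$ are Zariski-dense in this Bernstein variety, exactly the density input used in the proof of theorem 5.4 (cf. proposition 3.1 of \cite{Shin}); two regular functions agreeing on a Zariski-dense set are equal. The step I expect to be the main obstacle is the normalisation bookkeeping in the first two paragraphs: one must track the half-integral twist $|-|^{(1-n)/2}$ and the Artin reciprocity conventions carefully enough that the similitude factor and the Weil-group factors cancel exactly on the two sides, and then carry out the separation of the spectral variables $(\pi^p,\chi_p,\rho)$ cleanly. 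This uses in an essential way that $D$ is a division algebra, so that $D^\times$ is compact modulo its centre and $Irr(D^\times)$ behaves like the set of irreducibles of a compact group, and that $\tau$ remains in a single Frobenius coset.
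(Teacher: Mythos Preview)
Your overall strategy---feed corollary 6.1 (at $r=1$) and theorem 3.4 into each other, peel off the $\G_m$-factor via the class-field identity, and isolate a single $\pi_f$---is the same as the paper's. The paper, however, organises the isolation step differently and avoids your density argument entirely: given an arbitrary $\rho\in Irr(D^\times)$, it first \emph{globalises} $\rho$ to some $\pi_f$ with $H(\pi_f)\neq 0$ (lemma~1 of \cite{H1}), then chooses $f^p$ so that among the finitely many $\pi_f'$ with $K$-invariants only $\pi_f$ survives (using lemma~3 of \cite{H1}, i.e.\ that $\pi^p$ determines $\pi_p$). This yields the identity for the given $\rho$ in one stroke, with no appeal to Zariski density on the Bernstein variety.

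There is a genuine gap in your separation-in-$\rho$ step. You assert that for distinct $\rho$ the functionals $h\mapsto tr(h|\rho)$ \emph{and} $h\mapsto tr(f_{\tau,h}|\rho)$ are linearly independent; but the second family is exactly what you are trying to determine, and in fact the conclusion you seek is that each $tr(f_{\tau,h}|\rho)$ is a scalar multiple of $tr(h|\rho)$, so they are certainly \emph{not} independent. Moreover, fixing the central character on $E^\times$ only pins down $\rho|_{O_D^\times}$ up to a twist by an order-$n$ unramified character of $D^\times$, so even the $h\mapsto tr(h|\rho)$ alone need not separate the $\rho$'s. The correct fix is the one the paper uses: strong multiplicity one for $G$ (lemma~3 of \cite{H1}) forces that for fixed $\pi^p$ there is at most one $(\rho,\chi_p)$ with $a(\rho\otimes\chi_p\otimes\pi^p)\neq 0$, so your sum over $\rho$ has a single term and no linear-independence argument is needed. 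With that correction your argument goes through; your final density step then becomes unnecessary, since lemma~1 of \cite{H1} already lets you realise \emph{every} $\rho$ as a local component of some $\pi_f$ contributing to a suitable $H_\xi$. (Note also that the density input you cite from the proof of theorem~5.4 concerns $I(\Q_p)=GL_n(E)\times\Q_p^\times$, not $D^\times$; for $D^\times$ the relevant statement is again the globalisation lemma.)
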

\begin{proof}
As in the proof of theorem 8.1 in \cite{SS}, it is equivalent to prove the equality for the corresponding quasi-EL case.
With the notation of section 3, we have $G_{\Q_p}=D^\times\times\Q_p^\times$.
By theorem 3.4 we know
\[H_\xi=\sum_{\pi_f}a(\pi_f)\pi_f\otimes(r_{-\mu}\circ\varphi_{\pi_p}|_{W_E})|-|^{(1-n)/2}.\]
By lemma 1 of \cite{H1},
we can globalize $\rho$ to an irreducible admissible representation $\pi_f$ of $G(\A_f)$ such that there is an algebraic representation $\xi$ of $G$ with the $\pi_f$-isotypic component of $H_\xi$ \[H(\pi_f):=Hom_{G(\A_f)}(\pi_f, H_\xi)\neq 0.\]Take some integer $m\geq 1$ such that $h\in C_c^\infty(G(\Z_p))$ is bi-$K^m_p$-invariant. Also, take a compact open compact subgroup $K^p\subset G(\A_f^p)$ such that $\pi_f^p$ has $K^p$-invariants. Let $K=K^m_pK^p$, then since $H_\xi^K=\sum(-1)^iH^i(Sh_{K,\ov{\Q}_p},\mathcal{L}_\xi)$ is finite dimensional, there are only finitely many irreducible admissible representations $\pi_f'$ with invariants under $K$ and $H(\pi_f')\neq 0$. There exists a function $f^p\in C_c^\infty(G(\A_f^p))$ bi-$K^p$-invariant with $tr(f^p|\pi_f^p)=1$ and such that whenever $\pi_f'$ is an irreducible representation of $G(\A_f)$ with $H(\pi_f')\neq 0$, with invariants under $K$, and $tr(f^p|(\pi_f')^p)\neq 0$, then $\pi_f\simeq \pi_f'$. Note the $N$ of corollary 6.1 equals $n$ under our assumption here.
Now we compute the trace of the function $f_{\tau,h}\times f^p$ on $H_\xi$:
\[tr(f_{\tau,h}\times  f^p|H_\xi)=na(\pi_f)tr(f_{\tau,h}|\pi_p).\]
By the above corollary
\[na(\pi_f)tr(\tau|r_{-\mu}\circ \varphi_{\pi_p}|_{W_E}|-|^{(1-n)/2})tr(h|\pi_p)=na(\pi_f)tr(f_{\tau,h}|\pi_p).\]
We can deduce the desired identity.

\end{proof}

\section{The cohomology of Shimura varieties II}

Now consider the general case that $1\leq r\leq s$ are arbitrary integers. The cocharacter $\mu: \G_m\lra G_{\ov{\Q}_p}$ has a decomposition $\mu=(\mu_1,\dots,\mu_r,\mu',\mu_0)$, which induces a decomposition of $r_{-\mu}=(\otimes_{i=1}^rr_{-\mu_i})\otimes r_{-\mu'}\otimes r_{-\mu_0}$ when restricting on $\wh{G}$. In fact one sees easily that $r_{-\mu'}$ is the trivial representation. $E$ is the composition of local reflex fields $F_{\vp_i}$ for each $\mu_i$ with $1\leq i\leq r$. Any smooth irreducible representation $\pi$ of $G(\Q_p)=\prod_{i=1}^rD_i^\times\times (D')^\times\times\Q_p^\times$ has a tensor product decomposition $\pi=\otimes_{i=1}^r\pi_i\otimes \pi'\otimes\chi$. Accordingly we have $L$-parameters $\varphi_\pi,\varphi_{\pi_1},\dots,\varphi_{\pi_r},\varphi_{\pi'},\varphi_\chi$. Then the properties of the test functions lead to the following character identity in the general case.
\begin{proposition}
For any irreducible representation $\pi$ of $G(\Q_p)$ with $L$-parameter $\varphi_\pi$, let $h\in C_c^\infty(G(\Z_p))$ have the form as $h=h_1\times\cdots\times h_r\times h'\times h_0$ with $h_i\in C_c^\infty(O_{D_i}^\times)$ for $1\leq i\leq r$, $h'\in C_c^\infty(O_{D'}^\times)$, and $h_0\in C_c^\infty(\Z_p^\times)$, we have the identity
\[tr(f_{\tau,h}|\pi)=tr(\tau|(r_{-\mu}\circ \varphi_\pi|_{W_E})|-|^{r(1-n)/2})tr(h|\pi).\]
\end{proposition}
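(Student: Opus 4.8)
The plan is to reduce the general case to the single-factor character identity of Proposition 6.2, using the product structure of all the objects involved. The key observation is that everything in sight decomposes compatibly along the decomposition $G(\Q_p)=\prod_{i=1}^rD_i^\times\times(D')^\times\times\Q_p^\times$: the test function factors as $\phi_{\tau,h}=\phi_{\tau,h_1}\times\cdots\times\phi_{\tau,h_r}\times\phi_{h'}\times h_0$ by the explicit computation in section 4, its transfer factors as $f_{\tau,h}=f_{\tau,h_1}\times\cdots\times f_{\tau,h_r}\times h'\times h_0'$ (the factors $\phi_{h'}$ and $\phi_{h_{\G_m}}$ being handled by the matching statements of \cite{Sch2} proposition 4.3 and remark 4.11 of \cite{Sch3}), the representation decomposes as $\pi=\pi_1\otimes\cdots\otimes\pi_r\otimes\pi'\otimes\chi$, and the Galois side decomposes as $r_{-\mu}\circ\varphi_\pi=(\bigotimes_{i=1}^rr_{-\mu_i}\circ\varphi_{\pi_i})\otimes(r_{-\mu'}\circ\varphi_{\pi'})\otimes(r_{-\mu_0}\circ\varphi_\chi)$ with $r_{-\mu'}$ trivial. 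So both sides of the claimed identity are products of the corresponding factors, and it suffices to establish the identity factor by factor.

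Concretely, the steps I would carry out are as follows. First I would invoke the product decompositions above to write $\mathrm{tr}(f_{\tau,h}|\pi)=\prod_{i=1}^r\mathrm{tr}(f_{\tau,h_i}|\pi_i)\cdot\mathrm{tr}(h'|\pi')\cdot\mathrm{tr}(h_0'|\chi)$. Second, for each $1\leq i\leq r$, I would apply Proposition 6.2 to the local datum $(D_i,\{\mu_i\})$ over $F_{\vp_i}$, obtaining $\mathrm{tr}(f_{\tau,h_i}|\pi_i)=\mathrm{tr}(\tau|(r_{-\mu_i}\circ\varphi_{\pi_i}|_{W_{F_{\vp_i}}})|-|^{(1-n)/2})\,\mathrm{tr}(h_i|\pi_i)$ — here one must restrict $\tau$ from $W_E$ to $W_{F_{\vp_i}}$, which is legitimate since $E$ contains $F_{\vp_i}$. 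Third, for the étale factor $(D',\mu')$, since $r_{-\mu'}$ is trivial and the test function $\phi_{h'}$ simply matches $h'$ with no twisted structure (the $D'$-groups being étale), one gets $\mathrm{tr}(f_{\tau,h'}|\pi')=\mathrm{tr}(h'|\pi')$, consistent with the trivial Galois contribution. Fourth, for the $\G_m$-factor I would use the identity displayed at the end of section 4, namely $\mathrm{tr}(\phi_{\tau,h_{\G_m}}|\chi\circ N_{\Q_{p^t}|\Q_p})=\mathrm{tr}(\tau^{-1}|\chi\circ\mathrm{Art}_{\Q_p})\,\mathrm{tr}(h|\chi)$, which after translating through the matching function $h_0'$ and local class field theory gives exactly $\mathrm{tr}(\tau|r_{-\mu_0}\circ\varphi_\chi|_{W_E})\,\mathrm{tr}(h_0|\chi)$. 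Finally I would multiply the $r+2$ factors together; the $|-|$-twists combine to $|-|^{r(1-n)/2}$ since only the $r$ division-algebra factors each contribute $|-|^{(1-n)/2}$, and the tensor product of the Galois factors reassembles $r_{-\mu}\circ\varphi_\pi|_{W_E}$, yielding the stated formula.

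The main obstacle, and the point requiring the most care, is bookkeeping of the Weil-group restrictions and the compatibility of the local Langlands parameters with the product decomposition. One has to check that $\varphi_\pi$ restricted to $W_E$ really is the tensor product of the $r_{-\mu_i}\circ\varphi_{\pi_i}$ (each a priori a representation of $W_{F_{\vp_i}}$, to be restricted or induced appropriately to $W_E$) together with the $\G_m$ piece, in a way compatible with how $r_{-\mu}$ is built from the $r_{-\mu_i}$ as in \cite{Ko1} lemma 2.1.2 — this is the EL/quasi-EL analogue of the computation performed explicitly in the proof of Theorem 3.4, and one should verify it goes through verbatim here. A secondary subtlety is that Proposition 6.2 was stated and proved in a quasi-EL global situation, so to apply it to the $i$-th factor one should either check that each $\vp_i$ can be realized in a suitable auxiliary global Shimura datum of the type in section 2 with $r=1$ at that place (which the construction in section 2 allows), or observe, as in the proof of Theorem 5.6, that the properties of $\phi_{\tau,h}$ reduce the statement to the local quasi-EL case where Proposition 6.2 directly applies. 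Once these compatibilities are in place, the argument is a routine multiplicativity computation.
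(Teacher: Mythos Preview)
Your proposal is correct and follows essentially the same approach as the paper's own proof, which simply says the identity ``comes from the decompositions of $f_{\tau,h}$, $(r_{-\mu}\circ\varphi_\pi|_{W_E})|-|^{r(1-n)/2}$, $h$ and the corresponding identities for each factor $f_{\tau,h_i}$, $1\leq i\leq r$, and for $h_0'$.'' Your write-up is in fact more careful than the paper's one-line proof: you make explicit the bookkeeping with the Weil-group restrictions $W_E\subset W_{F_{\vp_i}}$ and flag the question of whether Proposition~6.2, proved via a global argument with $r=1$, applies to each local factor $(D_i,\mu_i)$ --- points the paper takes for granted.
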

\begin{proof}
This comes from the decompositions of $f_{\tau,h},\, (r_{-\mu}\circ\varphi_\pi|_{W_E})|-|^{r(1-n)/2}$, $h$ and the corresponding identities for each factor $f_{\tau,h_i},\,1\leq i\leq r$, and for $h_0'$.
\end{proof}

\begin{theorem}
We have the identity
\[H_\xi=\sum_{\pi_f}a(\pi_f)\pi_f\otimes(r_{-\mu}\circ\varphi_{\pi_p}|_{W_E})|-|^{r(1-n)/2}\]
as virtual $G(\Z_p)\times G(\A_f^p)\times W_E$-representations. The notations in this identity are the same as those in theorem 3.4.

\end{theorem}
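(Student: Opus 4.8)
The plan is to run the Langlands--Kottwitz comparison of Sections 5 and 6 once more, this time feeding in the general-$r$ character identity of Proposition 7.1 in the role played by the $r=1$ identity (Proposition 6.2) before. Fix a sufficiently small $K^p$, an $f^p\in C_c^\infty(G(\A_f^p))$, a $\tau\in Frob^jI_E$, and a product-form $h=h_1\times\cdots\times h_r\times h'\times h_0\in C_c^\infty(G(\Z_p))$ as in Corollary 6.1. Starting from the trace formula over Kottwitz triples (Theorem 5.5, which rests on the vanishing of twisted orbital integrals off the Kottwitz triples, Theorem 5.4) and applying the pseudostabilization of Corollary 6.1, one gets
\[N\cdot\mathrm{tr}(\tau\times h\times f^p\mid H_\xi)=\mathrm{tr}(f_{\tau,h_1}\times\cdots\times f_{\tau,h_r}\times h'\times h_0'\times f^p\mid H_\xi);\]
by Theorem 5.4 and the compatibilities of Sections 4 and 6 the product function on the right is exactly the transfer $f_{\tau,h}$ of $\phi_{\tau,h}$, and the right-hand side involves only the $G(\A_f)$-action on $H_\xi$.

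Now write $H_\xi=\sum_{\pi_f}\pi_f\otimes H(\pi_f)$ with $H(\pi_f)=\mathrm{Hom}_{G(\A_f)}(\pi_f,H_\xi)$ a virtual $W_E$-representation, and fix $\pi_f$ occurring in $H_\xi$. Exactly as in the proof of Proposition 6.2 — using strong multiplicity one (lemmas 1 and 3 of \cite{H1}), finite-dimensionality of $H_\xi^K$ for fixed $K$, and Zariski density of local components of automorphic representations in the Bernstein variety — one chooses $f^p$ bi-$K^p$-invariant with $\mathrm{tr}(f^p\mid\pi_f^p)=1$ isolating the $\pi_f$-contribution to both sides, and one takes $h$ with $\mathrm{tr}(h\mid\pi_p)\neq 0$. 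Substituting Proposition 7.1 and cancelling $\mathrm{tr}(h\mid\pi_p)$, the identity collapses, after letting $\tau$ range over $\bigcup_j Frob^jI_E$, to
\[N\cdot H(\pi_f)=(\dim H(\pi_f))\cdot(r_{-\mu}\circ\varphi_{\pi_p}|_{W_E})|-|^{r(1-n)/2}\]
as virtual $W_E$-representations. Since $\dim r_{-\mu}=N$, this gives $H(\pi_f)=\big(\tfrac{1}{N}\dim H(\pi_f)\big)(r_{-\mu}\circ\varphi_{\pi_p}|_{W_E})|-|^{r(1-n)/2}$, and the scalar is identified with $a(\pi_f)$ exactly as at the end of the proof of Theorem 3.4: by Matsushima's formula $\dim H(\pi_f)=\sum_{\pi_\infty}m(\pi_f\otimes\pi_\infty)\,\mathrm{ep}(\pi_\infty\otimes\xi)$, which equals $N\,a(\pi_f)$ with $a(\pi_f)$ as in \cite{Ko3} p.~657 (using that $G$ has trivial endoscopic groups and $Sh_K$ is projective). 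Summing over $\pi_f$ yields the asserted formula; it holds as a $G(\Z_p)\times G(\A_f^p)\times W_E$-representation, the restriction to $G(\Z_p)$ at $p$ being the precision afforded by test functions in $C_c^\infty(G(\Z_p))$.

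The main difficulty here is not conceptual: the genuine input — Theorem 3.4 ($r=1$, via Boyer and Dat), the vanishing Theorem 5.4, the pseudostabilized trace formula (Corollary 6.1), and the factorization of the character identity through the $r=1$ case (Proposition 7.1) — is already in place, so what remains is bookkeeping. Specifically, one must track all normalizations consistently through the comparison: the twist $|-|^{r(1-n)/2}$, the factor $N=\dim r_{-\mu}=n^r$, the Haar-measure conventions of Section 5, and the signs incurred in passing to virtual representations, and one must check that the isolation of $\pi_f$ goes through verbatim in this non-quasi-split setting. I would also record, following Mieda, that since the product Rapoport--Zink space decomposes as $\prod_{i=1}^r\wh{\M}_{Dr,\vp_i}$ times a group factor, one may bypass the test functions entirely: run the Hochschild--Serre/Mantovan argument of Section 3 directly, use K\"unneth to express $\mathrm{Mant}_\mu$ as a tensor product over $i$, and evaluate each factor by Theorem 3.3.
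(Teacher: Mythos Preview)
Your proposal is correct and uses exactly the same ingredients as the paper: the pseudostabilized identity of Corollary~6.1, the character identity of Proposition~7.1, and Matsushima's formula. The only difference is organizational: the paper applies Matsushima's formula $H_\xi=N\sum_{\pi_f}a(\pi_f)\pi_f$ globally at the outset to evaluate the right-hand side of Corollary~6.1 directly, whereas you first isolate each $\pi_f$ (via the argument from Proposition~6.2) and only invoke Matsushima at the end to identify $\tfrac{1}{N}\dim H(\pi_f)$ with $a(\pi_f)$; the paper's route is shorter since the isolation step is unnecessary here, and your mention of Zariski density in the Bernstein variety is not actually needed for this argument.
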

\begin{proof}
As in the proof of corollary 6.1, we can take the pseudostabilization of the formula in theorem 5.4 to get the equality
\[tr(\tau\times h\times f^p|H_\xi)=N^{-1}tr(f_{\tau,h}f^p|H_\xi).\]On the other hand, Matsushima's formula implies
\[H_\xi=N\sum_{\pi_f}a(\pi_f)\pi_f.\]Put this into the above formula and take account the equality of proposition 7.1, we get
\[ tr(\tau\times h\times f^p|H_\xi)=\sum_{\pi_f}a(\pi_f)tr(\tau|(r_{-\mu}\circ\varphi_{\pi_p})|-|^{r(1-n)/2})tr(hf^p|\pi_f).\]
This gives the desired identity.
\end{proof}
As in the proof of the theorem, the crucial point is that we have the character identity of proposition 6.2, which in turn needs first theorem 5.4 to hold true to have the definitions of the functions $f_{\tau,h}$. This identity plus that in the theorem 8.1 of \cite{SS} can be used to prove new cases for the description of cohomology of Shimura varieties. For example, the compact unitary Shimura varieties with trivial endoscopy, such that the local reductive groups at $p$ are products of (Weil restrictions of) $D^\times$ and $GL_n$ where $D$ is a central division algebra over a finite extension of $\Q_p$ with invariant $\frac{1}{n}$. Moreover, to apply the results in this paper, we have to require the local cocharacters for the factors $D^\times$ are the same as those studied here. The case of $n=2$ in \cite{Ko3} but for arbitrary $p$ (with the above requirement on local cocharacters at ramified places) will be a typical example. We will treat this case and the related quaternionic Shimura varieties in \cite{Sh}.

From this theorem we get the following corollary concerning the local semisimple Hasse-Weil zeta functions of our Shimura varieties. For the definition of local semisimple Hasse-Weil zeta functions and local semisimple automorphic $L$ functions, see \cite{Ra}. Let $\wt{E}$ be the global reflex field, and $\nu$ be a place of $\wt{E}$ above $p$ such that $E=\wt{E}_\nu$.
\begin{corollary}
In the situation of the theorem, let $K\subset G(\A_f)$ be any sufficiently small open compact subgroup. Then the semisimple local Hasse-Weil zeta function of $Sh_K$ at the place $\nu$ of $\wt{E}$ is given by
\[\zeta_{\nu}^{ss}(Sh_K,s)=\prod_{\pi_f}L^{ss}(s-r(n-1)/2,\pi_p,r_{-\mu})^{a(\pi_f)dim\pi_f^K}.\]
\end{corollary}
\begin{proof}We can assume that $K$ has the form as $K^pK_p\subset G(\A_f^p)\times G(\Z_p)$. Then the corollary follows from the previous theorem and the definitions.
\end{proof}
In the case $r=1$ Dat has proved the Weight-Monodromy conjecture for these Shimura varieties, cf. \cite{Dat} 5.2. In fact, there the varieties involved are of the form $\M_{Dr,m}/\Gamma$ where $\Gamma\subset GL_n(E)$ is some torsion free discrete subgroup. Here our Shimura varieties have the same connected components as $\M_{Dr,m}/\Gamma$ for suitable $\Gamma$. By \cite{Ra} section 2 one can recover the classical Hasse-Weil zeta function.
\begin{corollary}
Let $r=1$ and $K\subset G(\A_f)$ be any sufficiently small open compact subgroup in the situation of the theorem. Then the local Hasse-Weil zeta function of $Sh_K$ at the place $\nu$ of $\wt{E}$ is given by
\[\zeta_{\nu}(Sh_K,s)=\prod_{\pi_f}L(s-r(n-1)/2,\pi_p,r_{-\mu})^{a(\pi_f)dim\pi_f^K}.\]
\end{corollary}
Finally we remark that, assuming the results of \cite{Mie}, in the above corollary there will be no restriction for the integer $r$, i.e. it can be an arbitrary positive integer, see the paragraph under corollary 1.3 in the introduction.

\end{document}